\title{Optimal approximation of infinite-dimensional holomorphic functions }
\author{Ben Adcock\thanks{Department of Mathematics, Simon Fraser University, Burnaby BC, Canada} \and Nick Dexter\thanks{Department of Scientific Computing, Florida State University, Tallahassee, USA} \and Sebastian Moraga\footnotemark[1]}
\begin{document}

\maketitle


\begin{abstract}
Over the several decades, approximating functions in infinite dimensions {from samples} {has} gained increasing attention in {computational science and engineering}, especially in computational uncertainty quantification. This is primarily due to the relevance of functions that are solutions to parametric differential equations in various fields, e.g. chemistry, economics, engineering, and physics.  While acquiring accurate and reliable approximations of such functions is inherently difficult, current benchmark methods exploit the fact that such functions often belong to certain classes of holomorphic functions to get algebraic convergence rates in infinite dimensions with respect to the number of {(potentially adaptive)} samples $m$. Our work focuses on providing theoretical {approximation} guarantees for the class of so-called \textit{$(\bm{b},\varepsilon)$-holomorphic} functions, {demonstrating} that these algebraic rates are the best possible for Banach-valued functions in infinite dimensions. {We} establish lower bounds using a reduction to a discrete problem {in combination with the theory of} $m$-widths, Gelfand widths and Kolmogorov widths. We study two cases, \emph{known} and \emph{unknown anisotropy}, in which the relative importance of the variables is known and unknown, respectively. A key conclusion of our paper is that in the latter setting, approximation from finite samples is impossible without some inherent ordering of the variables, even if the samples are chosen adaptively. Finally, in both cases, we demonstrate near-optimal, non-adaptive (random) sampling and recovery strategies which achieve {close to} same rates as the lower bounds. 
\end{abstract}

\noindent
\textbf{Keywords:} high-dimensional approximation, holomorphic functions, Banach spaces, Gelfand and Kolmogorov widths, adaptive sampling, information complexity

\pbk
\textbf{Mathematics Subject Classification (2020):} 65D40; 41A10; 41A63; 65Y20; 41A25

\pbk
\textbf{Corresponding author:} {\tt smoragas@sfu.ca} 

\section{Introduction}

Approximating infinite-dimensional Banach-valued functions is an essential task in {computational science and engineering}. Different physical processes are studied using parametric models where the input variable $\bm{y} \in \cU$  typically belongs to a subset of $\bbR^{\bbN}$. These variables may represent parameters simulating complex physical phenomena in {applications arising in} biology, chemistry, economics, engineering {or other fields}. Often, such processes can be represented by a {Hilbert or Banach space-valued} function {$f : \bm{y} \in \cU {\mapsto} f(\bm{y}) \in \cV$} {arising} as a solution of a (system of) parametric {Differential Equations (DEs)}. In this work, and in various applications, the underlying model involves an infinite-dimensional (instead of a high-dimensional) function  to describe the physical process. For instance,  Karhunen-Lo\`{e}ve expansions are used to model random process{es} in many fields{. See, e.g.,} \cite[Sec.~2.1]{le-maitre2010spectral} for further examples.

In most such applications the function of interest is unknown and expensive to evaluate through either numerical simulations or costly physical experiments. Therefore, much effort has focused on the task of constructing \textit{surrogate models}, i.e.,\ approximations $\hat{f}$ that are accurate, but computationally inexpensive to use. While there are several different methodological approaches to this task, in the \textit{nonintrusive} approach one strives to construct $\hat{f}$ from a finite set of {pointwise} \textit{samples} of $f$. Since evaluating $f$ is expensive, it is a crucial and a challenging task to design methods which achieve as good approximation as possible using as few samples as possible.

Various methods have been developed for nonintrusive surrogate model construction. A partial list includes sparse polynomial approximation \cite{beck2012optimal,bieri2010sparse,chkifa2015breaking,cohen2015approximation,cohen2010convergence,hansen2013analytic,hansen2013sparse,chkifa2018polynomial},   Gaussian process regression (or kriging) \cite{smith2013uncertainty,sullivan2015introduction}, radial basis functions \cite{smith2013uncertainty,jung2010recovery}, reduced-basis methods \cite{hesthaven2015certified,quarteroni2015reduced}, and most recently deep neural networks and deep learning \cite{adcock2022nearoptimal,adcock2021deep,adcock2020between,dung2021deep,dung2021computation,
daws2019analysis,opschoor2022exponential,herrman2022constructive,opschoor2022deep,schwab2021deep,li2020better}.  Many of these methods have demonstrated high practical efficacy on parametric DE problems. On the theoretical side, algebraic rates of convergence of the form $\ord{(m/\mathrm{polylog(m)})^{1/2-1/p}}$ have been shown for several such methods, including sparse polynomials and deep learning \cite{adcock2022sparse,chkifa2015breaking,schwab2019deep,adcock2022efficient,adcock2022nearoptimal}, when applied to the class of \textit{Hilbert-valued,} \textit{$(\bm{b},\varepsilon)$-holomorphic} functions \cite{adcock2022sparse,chkifa2015breaking,schwab2019deep,cohen2018shape}. Here $m$ is the number of {pointwise samples}, which are typically Monte Carlo samples, $0 < p < 1$ is a smoothness parameter, $\varepsilon > 0$ and $\bm{b} \in \ell^p(\bbN)$ is a sequence defining the specific class {which determines the {anisotropy} of the functions within the class}. {See \S\ref{S:class_f} for a formal definition of this class.}
 
However, to the best of our knowledge, few works have sought to determine the underlying limits of approximability from samples for such function classes. Note that \cite{bachmayr2016widths} considers Kolmogorov widths for {Hilbert-valued functions $\bm{y} \mapsto u(\bm{y})$ arising as solutions to certain} parametric elliptic PDEs, but does not address the question of finite samples. Such a question is the focus of this paper. Our main results estimate the \textit{$m$-widths} for these function classes. {We first establish lower bounds, which show that no method {-- i.e., no combination of an arbitrary (adaptive) sampling operator and (potentially nonlinear) reconstruction map --} can achieve better rates than $m^{1/2-1/p}$ within this class for $\bm{b} \in \ell^p(\bbN)$ and $0 < p<1$.
These lower bounds are close to the rates achieved by the methods discussed above. Hence, our results {indicate} that these methods are near optimal {(we use the term `indicate' here since {the} bounds for these methods are generally nonuniform with respect to the function, whereas our lower bounds are uniform -- see \S\ref{S:conclusions} for further discussion)}. We also establish upper bounds, which show that the rate $m^{1/2-1/p}$ can be achieved up to a possible log factor by using a suitable random sampling operator and reconstruction map. For both bounds, we study two distinct scenarios, { as previously delineated in} \cite{adcock2022nearoptimal}. First, the \textit{known anisotropy} setting, where $\bm{b}$ is known and therefore can be used by the reconstruction map. Second, the more challenging \textit{unknown anisotropy} setting, where $\bm{b}$ is unknown.

Generally speaking, our work is related to recent advances \cite{binev2022optimal,temlyakov2021optimal} in optimal recovery  \cite{devore1989optimal,micchelli1977survey}. We use concepts and ideas from information-based complexity {\cite{novak1988deterministic,traub1988ibc,novak2010trac,novak2008trac}}, in particular $m$-widths, to understand the aforementioned limits. More specifically, we use lower estimates for the  Gelfand widths, Kolmogorov widths and in general  $m$-widths theory  (see, e.g., \cite{pinkus1968n-widths}).

{The outline of the remainder of this paper is as follows. In  \S\ref{S:prelim} we first introduce a series of preliminaries and notation that will be used later. Next, in \S\ref{s:summary} we introduced the specific function spaces used in this work. In \S\ref{s:summary_2} we  summarize the problems studied  and then present our main results.   These are then proved in \S\ref{S:proofs}. Finally, we conclude in \S\ref{S:conclusions} by listing a number of open problems. We also have two appendices (Appendices \ref{Ap:poly} and \ref{s:widths}), which present several technical results needed in the main arguments.}

\section{Preliminaries}\label{S:prelim}

{We now  introduce some general notation, further preliminaries and setup.}

\subsection{Notation}

{We denote $\bbN$ and $\bbN_0$ as the set of positive and nonnegative integers, respectively.} Let $N \in \mathbb{N}$. We write $[N] = \{1,2,\ldots,N\}$ with the convention $[N] = \bbN$ when $N = \infty$. We denote the real vector space of dimension $N$ by $\mathbb{R}^N$, and the space of real-valued sequences indexed over $\mathbb{N}$ by $\mathbb{R}^{\mathbb{N}}$. In either space, we write $\bm{e}_j=(\delta_{j,k})_{k \in [N]}$ for the standard basis vectors, where $j \in [N]$. 

In either finite or infinite dimensions, we write $\0$ and $\bm{1}$ for the multi-indices consisting of all zeros and all ones, respectively. The inequality $\bm{\mu} \leq  \bnu $ is understood componentwise for any multi-indices $\bm{\mu}  = (\mu_i)_{i \in [{N}]}$ and  $\bnu = (\nu_i)_{i \in [{N}]}$, i.e., $\bm{\mu} \leq \bm{\nu}$ means that $\mu_k \leq  \nu_k$ for all $k \in [{N}]$.
We write $\w \odot \v = (w_iv_i)_{i \in [{N}]}$ {for} the Hadamard product {of} vectors $\bm{w} = (w_i)_{i \in [{N}]}$ and $\bm{v} = (v_i)_{i \in [{N}]}$.  {Let $\bnu =(\nu_i)_{i \in [N]}$ and $\bm{\mu} =(\mu_i)_{i \in [N]}$. Then we also write
\begin{equation*}
 \bnu^{\bm{\mu}} = \prod_{i \in [N]} \nu_i^{\mu_i}, \quad \text{ and } \quad  \bnu ! = \prod_{i \in [N]} (\nu_i !),
 \end{equation*} 
with the convention that $0^0=1$.}

Now let  ${1} \leq p \leq \infty$ and $\w = (w_i)_{i \in \bbN} > \bm{0}$ be a sequence of positive weights. We define the weighted $\ell^p$-space $\ell^p(\bm{w})$ to be the set of all sequences $\z=(z_i)_{i \in \bbN} \in \bbR^{\bbN}$ for which the weighted {(quasi-)} norm $\|{\bm{z}}\|_{p,\w}$ is finite. That is
\begin{equation*}
\|{\bm{z}}\|_{p,\w} : = \left \{ \begin{array}{lc} \left (  \sum_{i \in \bbN } w_i^{-p}{|z_i|^p}\right )^{1/p}< \infty, &  p < \infty ,
\\ 
\sup_{i \in \bbN }\left\lbrace w_i^{-1}{|z_i|} \right\rbrace < \infty, & p = \infty. \end{array} \right . 
\end{equation*}
When $\bm{w} = \bm{1}$ is the vector of ones, we just write $(\ell^p,\nm{\cdot}_{p})$. For  $N \in\bbN$ and $\bm{w} = (w_i)_{i \in [N]}$, we use $\ell^p_N(\bm{w})$ to denote the  finite dimensional space $(\bbR^N,\nm{\cdot}_{p,\w})$ of vectors of length $N$. {When $\bm{w} = \bm{1}$, we just write $(\ell^p_N,\nm{\cdot}_{p})$.}

Next, we write
\begin{equation}\label{eq:defB_weighted}
B^p_N(\bm{w}) = \begin{cases} \left \{ \bm{x} = (x_i)^{N}_{i=1} \in \bbR^N : 
\left (\sum^{N}_{i=1}  {w_i}^{-p} {|x_i|^p}\right )^{1/p} \leq 1 \right \} & p < \infty ,
\\ 
\left \{ \bm{x} = (x_i)^{N}_{i=1} \in \bbR^N :\max_{i=1,\ldots,N} \left \{  w_i^{-1}{|{x}_i|}  \right \} \leq 1 \right \} & p = \infty,
\end{cases}
\end{equation}
for the weighted $\ell^p$-norm {(quasi-norm)} unit ball when $p\geq 1$ {( $0<p<1$)}. When $\bm{w} = \bm{1}$, we simply write $B^p_N$.  

Finally, given a set $S \subseteq [N]$, $\bm{x}_S$ is the vector with  $i$th entry equal to $x_i$ if $i \in S$ and zero otherwise{. We also write $S^c$ for the set complement $[N] \setminus S$ of $S$.}

\subsection{Lebesgue--Bochner spaces}\label{S:function_spaces}
In this paper, we focus on functions of the form  $f:\cU \rightarrow \cV$,
where $\cV$ is a Banach space and $\cU=[-1,1]^{\bbN}$ is the infinite-dimensional symmetric hypercube. We write $\bm{y} = (y_i)_{i \in \bbN}$ for the variable over $\cU$. We construct a probability measure  $\varrho$ as follows. First, we consider a nonnegative Borel probability measure $\rho$ on the interval $[-1,1]$. We then define $\varrho$ as a tensor product of measures $\rho$, and we denote this by
\be{
\label{tensor-product-measure}
\varrho = \rho \times \rho \times \cdots .
}
The existence of  such  measure on $\cU$ is guaranteed by the Kolmogorov  extension theorem (see, e.g. \cite[\S2.4]{tao2011introduction}). A typical example we consider in this paper is case where $\varrho$ is the uniform probability measure, i.e., the tensor product of the univariate measure $\D \rho(y) = \frac12 \D y$. 

Now, given $\varrho$ and $1 \leq q \leq \infty $, we write $L^q_{\varrho}(\cU ; \cV)$ for the Lebesgue-Bochner space of (equivalence classes of) strongly $\varrho$-measurable functions $f: \cU \rightarrow \cV$  with finite norm
\begin{equation}\label{def_Bochnernorm}
\nm{f}_{L^q_{\varrho}(\cU;\cV)} : = 
\begin{cases} 
\left( \int_{\cU} \nm{f( \y)}_{\cV}^q \D \varrho (\y) \right)^{1/q} & 1 \leq q < \infty ,
\\
\mathrm{ess} \sup_{\y \in \cU} \nm{f(\y)}_{\cV}  & q = \infty.
\end{cases}
\end{equation}
For simplicity if $\cV = \bbR$ we simply write $ L^2_{\varrho}(\cU;\bbR)=L^2_{\varrho}(\cU)$.

\section{The space of $(\bm{b},\varepsilon)$-holomorphic functions and anisotropy} \label{s:summary}


We commence this section with a precise definition of the class of functions considered in this paper and a brief discussion of the known and unknown anisotropy cases.

\subsection{$(\bm{b},\varepsilon)$-holomorphic functions}\label{S:class_f}

Let $(\cV,\nm{\cdot}_{\cV})$ be a Banach space and $\cU = [-1,1]^{\bbN}$ be the infinite-dimensional symmetric hypercube of side length $2$. For a parameter $\bm{\rho} = (\rho_j)_{j \in \bbN} \geq \bm{1}$, with the inequality being understood component-wise, let $\cE_{\bm{\rho}} = \cE_{\rho_1} \times \cE_{\rho_2} \times \cdots \subset {\bbC}^{\bbN}$ denote the Bernstein polyellipse, where, for $\rho > 1$, $\cE_{\rho} = \{ \frac12 (z+z^{-1}) : z \in {\bbC},\ 1 \leq | z | \leq \rho \} \subset {\bbC}$ is the classical Bernstein ellipse and, by convention, $\cE_{\rho} = [-1,1]$ when $\rho = 1$. { Note that the major and minor semi-axis lenghts of the ellipse $\cE_{\rho}$ are given by $\frac12 (\rho \pm \rho^{-1})$ and its foci are at $\pm 1$.} {Moreover, classical polynomial approximation theory asserts that any function of one variable that is holomorphic within $\cE_{\rho}$ can be approximated up to an error depending on $\rho^{-n}$ by a polynomial of degree $n$. See, e.g., \cite[Chpt.\ 8]{trefethen2013approximation}.}

Now let $\bm{b} = (b_i)_{i\in\bbN} \in [0,\infty)^{\bbN} $ and $\varepsilon > 0$. A function $f : \cU \rightarrow \cV$ is \textit{$(\bm{b},\varepsilon)$-holomorphic} {\cite{adcock2022sparse,chkifa2015breaking,schwab2019deep,cohen2018shape}} if it is holomorphic in the region
\begin{equation}
\label{def:b-eps-holo}
\cR(\bm{b},\varepsilon) = \bigcup \left\lbrace  {\cE_{\bm{\rho}} } : \bm{\rho} \in [1,\infty)^{\bbN},  \sum^{\infty}_{i=1} \left ( \frac{\rho_i + \rho^{-1}_i}{2} - 1 \right ) b_i \leq \varepsilon  \right\rbrace \subset \bbC^{\bbN}.
\end{equation}
In this paper, without loss of generality we consider $\varepsilon = 1$,  a property which can be guaranteed by rescaling $\bm{b}$. For convenience, we write $\cR(\bm{b})$ instead of $\cR(\bm{b},1)$ and define
\bes{
\cH(\bm{b}) = \left \{ f : \cU \rightarrow \cV\text{ $(\bm{b},1)$-holomorphic} : \nm{f}_{L^{\infty}(\cR(\bm{b});\cV)} {= :\esssup_{\bm{z} \in \cR(\bm{b})} \nm{f({\bm{z}})}_{\cV} } \leq 1 \right \}.
}
The {class of $(\bm{b},\varepsilon)$-holomorphic functions} was first studied in the context of parametric DEs. As shown in the series of works \cite{adcock2022sparse,cohen2015approximation,chkifa2015breaking,aylwin2020domain,schwab2011sparse
,castrillon--candas2016analytic,cohen2010convergence,gunzburger2014stochastic,hansen2013analytic,hansen2013sparse,cohen2018shape,
hoang2012regularity,schwab2011sparse,traonmilin2018stable}, the solution maps of many common parametric DEs are $(\bm{b},\varepsilon)$-holomorphic functions of their parameters. See \cite[Ch.~4]{adcock2022sparse} and \cite{cohen2015approximation} for overviews. In tandem with the effort, there has also been a focus on the approximation theory of the class $\cH(\bm{b})$. This theory is now well developed.  A standard setting is to consider the case where $\bm{b} \in \ell^p(\bbN)$ for some $0 < p <1$. A signature result then shows that the best $s$-term polynomial approximation to any $f \in \cH(\bm{b})$ converges with rate $s^{1/2-1/p}$ in the Lebesgue--Bochner norm {$\nm{\cdot}_{L^2_\varrho(\cU;\cV)}$ defined in \eqref{def_Bochnernorm} above}. See \cite{chkifa2015breaking,cohen2010convergence} {in the case where} $\varrho$ {is} the uniform measure and \cite{rauhut2017compressive} in the case of the Chebyshev measure. {See also \cite[Ch.\ 3]{adcock2022sparse} and Corollary \ref{cor:known}.}

{The best $s$-term approximation is a theoretical benchmark. Unfortunately, it is often of limited practicality for computations, since} the largest $s$ polynomial coefficients can occur at arbitrary indices.
For this reason, it is also common to {make the additional assumption} that $\bm{b} \in \ell^p_{\mathsf{M}}(\bbN)$, where $\ell^p_{\mathsf{M}}(\bbN)$ is the \textit{monotone} $\ell^p$ space. This is the space of sequences whose minimal monotone majorant is $\ell^p$ summable, where, given  a sequence  $\bm{z}=(z_i)_{i \in \bbN} \in \bbR^{\bbN}$,   its \textit{minimal monotone majorant} is defined as 
\begin{equation}
\label{min-mon-maj}
\tilde{\bm{z}} = (\tilde{z}_i)_{i \in \bbN},\quad \text{where }
\tilde{z}_i = \sup_{j \geq i} | z_{j}|,\ \forall i \in \bbN.
\end{equation}
Given $0 < p < \infty$, the  \textit{monotone $\ell^p$-space} $\ell^p_{\mathsf{M}}(\bbN)$ is {then} defined by
\begin{equation*}
\ell^p_{\mathsf{M}}(\bbN) = \{ \bm{z} \in \ell^{\infty}(\bbN) : \nmu{\bm{z}}_{p,\mathsf{M}} : = \nmu{\tilde{\bm{z}}}_{p} < \infty  \}.
\end{equation*}
{When $\bm{b} \in \ell^p_{\mathsf{M}}(\bbN)$} one can attain the same rate {$s^{1/2-1/p}$}, but with a more structured polynomial approximation {that is more amenable to computations, since its} nonzero indices belong to a so-called \textit{anchored set}. {See \S\ref{Ap:anch} and, in particular, Corollary \ref{cor:anchored_sigma}.} We consider both settings, $\bm{b} \in \ell^p(\bbN)$ and $\bm{b} \in \ell^p_{\mathsf{M}}(\bbN)$, in this paper.

\subsection{Known versus unknown anisotropy}\label{S:known-unk}

We refer to $\bm{b}$ as the \textit{anisotropy} parameter of a function $f \in \cH(\bm{b})$. When $b_i$ is large, the condition
\bes{
\sum^{\infty}_{i=1} \left ( \frac{\rho_i + \rho^{-1}_i}{2} - 1 \right ) b_i \leq 1,
}
appearing in the definition \R{def:b-eps-holo} holds only for smaller values of $\rho_i$, meaning that $f$ is less smooth with respect to the variable $y_i$. Conversely, when $b_i$ is small (or zero), $f$ is smoother with respect to $y_i$. For further details we refer to \cite{adcock2022nearoptimal,adcock2021deep,adcock2022sparse}. 

An important consideration in this paper is whether $\bm{b}$ is known or unknown. In some applications, one may have knowledge of $\bm{b}$. However, as mentioned in \cite{adcock2022nearoptimal}, in practical UQ settings, where $f$ is considered a black box, the behaviour of the target function with respect to its variables (and therefore $\bm{b}$) is a priori unknown.
 In the \textit{known anisotropy} setting, a reconstruction procedure can use $\bm{b}$ to achieve a good approximation uniformly over the class $\cH(\bm{b})$. However, in the \textit{unknown anisotropy} setting, the reconstruction procedure has no access to $\bm{b}$. In this case, motivated by the best $s$-term approximation theory described above, we fix a $0 < p < 1$ and consider the classes
\begin{equation*}
\cH(p) =  \bigcup \left \{ \cH(\bm{b}) : \bm{b} \in \ell^p(\bbN),\ \bm{b} \in [0,\infty)^{\bbN},\   \nm{\bm{b}}_p \leq 1 \right \} 
\end{equation*}
and
\begin{equation*}
\cH(p,{\mathsf{M}})  = \bigcup  \left \{ \cH(\bm{b}) : \bm{b} \in \ell^p_{\mathsf{M}}(\bbN),\ \bm{b} \in [0,\infty)^{\bbN},\  \nm{\bm{b}}_{p,\mathsf{M}} \leq 1 \right \}  .
\end{equation*}

\section{Summary and main results}\label{s:summary_2}

This section introduces the adaptive sampling operators, the definitions of (adaptive) $m$-widths, the main results in this work and discusses their implications.

\subsection{Sampling operators}\label{S:sampling_scalar}


We now formalize what we mean by an adaptive sampling operator. {Note that this is commonly referred to as \textit{adaptive information} in the field of information-based complexity \cite[Sec.~4.1.1]{novak2008trac}. }
We commence with the scalar-valued case $\cV = \bbR$.

\defn{
[Adaptive sampling operator; scalar-valued case]
\label{def:adaptive-sampling-scalar}
 Consider a normed vector space $(\cY,\nm{\cdot}_{\cY})$. A scalar-valued    \textit{adaptive sampling operator} is a map of the form
\bes{
\cL : \cY  \rightarrow \bbR^m,\ \quad \cL(f) = \begin{bmatrix} L_1(f) \\ L_2(f ; L_1(f)) \\ \vdots \\ L_m(f ; L_{1}(f),\ldots, L_{m-1}(f)) \end{bmatrix},
}
where  $L_1 : \cY \rightarrow \bbR$ is a bounded  linear functional and, for $i = 2,\ldots,m$, $L_i :\cY \times \bbR^{i-1} \rightarrow \bbR$ is bounded and linear in its first component.
}
Notice that any linear map $\cL : \cY \rightarrow \bbR^m$ is an adaptive sampling operator. The rationale for considering adaptive sampling operators is to cover approximation methods where each subsequent sample is chosen adaptively in terms of the previous measurements. 

Note also that this definition allows for arbitrary adaptive sampling operators. An important special case is that of (adaptive) pointwise samples (so-called \textit{standard information} \cite[Sec.~4.1.1]{novak2008trac}). Let $\cY = C(\cU)$. Then this is defined as
\be{
\label{scalar-valued-pointwise-adaptive}
\cL(f) = (f(\bm{y}_i))^{m}_{i=1} \in \bbR^m,\quad \forall f \in C(\cU),
}
where $\bm{y}_i$ is the $i$th sample point, which is potentially chosen adaptively based on the previous measurements $f(\bm{y}_1),\ldots,f(\bm{y}_{i-1})$.

Next, we consider the Banach-valued case. In the following definition, for any $w \in \cV$ and $\bm{v} = (v_i)^{m}_{i=1} \in \bbR^m$, we write $w \bm{v}$ for the vector $(w v_i )^{m}_{i=1} \in \cV^m$. Note that in this definition, we consider   Lebesgue--Bochner spaces only, as opposed to arbitrary Banach spaces. 

\defn{
[Adaptive sampling operator; Banach-valued case]
\label{def:L-B}
Consider a vector space  $\cY \subseteq L^2_\varrho(\cU;\cV)$ with norm $\|{\cdot}\|_{\cY}$ {and an operator}
\bes{
\cL : \cY  \rightarrow \cV^m,\ \quad {\cL(f)} = \begin{bmatrix} L_1(f) \\ L_2(f ; L_1(f)) \\ \vdots \\ L_m(f ; L_{1}(f),\ldots, L_{m-1}(f)) \end{bmatrix},
} 
where $L_1 : \cY   \rightarrow \cV$ is a bounded linear operator and, for $i = 2,\ldots,m$, $L_i : \cY \times \cV^{i-1} \rightarrow \cV$ is a bounded linear operator in its first component. {Then $\cL$ is a \textit{Banach-valued adaptive sampling operator} if the following condition holds.} There exist $v,w \in \cV \backslash \{0\}$,  a normed vector space $\widetilde{\cY} \subseteq L^2_{\varrho}(\cU)$ and a scalar-valued adaptive sampling operator $\widetilde{\cL} : \widetilde{\cY} \rightarrow \bbR^m$ (see Def.~\ref{def:adaptive-sampling-scalar})
such that if $v g \in \cY$ for some $g \in L^2_{\varrho}(\cU)$ then $g \in \widetilde{\cY}$ and $\cL(v g) = w \widetilde{\cL}(g)$.
}
Note that the condition imposed in Def.~\ref{def:L-B} is not a strong one. For example, it trivially holds in the important case of (adaptive) pointwise sampling. Here, we consider $\cY = C(\cU ; \cV)$ and
\bes{
\cL(f ) = (f(\bm{y}_{i}))^{m}_{i=1} \in \cV^m,\quad \forall f \in \cY,
}
where $\bm{y}_{i} \in \cU$ is the $i$th sample point,  which is potentially chosen adaptively based on the previous measurements $f(\bm{y}_1),\ldots,f(\bm{y}_{i-1})$. In this case, we clearly have
\bes{
\cL(v g) = v \widetilde{\cL}(g),\quad \forall g \in \widetilde{\cY} : = C(\cU),\ v \in \cV,
}
where $\widetilde{\cL}$ is the scalar-valued (adaptive) pointwise sampling operator \R{scalar-valued-pointwise-adaptive}. 

{The condition imposed in Def.~\ref{def:L-B} is used to establish our lower bounds -- in particular, the reduction to a discrete problem in Lemma \ref{l:discrete-reduction-known} and elsewhere}. It is an open problem whether these bounds hold in the Banach-valued case without this assumption.

\subsection{Adaptive $m$-widths} \label{S:sampling}

Let $(\cV,\nm{\cdot}_{\cV})$  be a  Banach space. In this paper, we study the \textit{(adaptive)   $m$-width} (which is related to the \textit{information complexity} \cite[\S4.1.4]{novak2008trac}) 
\be{
\label{Theta_m-def}
\Theta_m(\cK ; \cY,\cX) =  \inf \left \{ \sup_{f \in \cK} \nmu{f - \cT ( \cL(f)) }_{\cX} : \cL : \cY \rightarrow \cV^m\text{ adaptive},\ \cT : \cV^m \rightarrow \cX   \right \},
}
where $\cY$ is a normed vector subspace of the Lebesgue--Bochner space  $\cX = L^2_{\varrho}(\cU ; \cV)$  with  tensor-product  probability measure $\varrho$ and $\cK \subseteq \cY$. 
Here $\cL$ is {an adaptive sampling operator, as in Defs.\ \ref{def:adaptive-sampling-scalar} and \ref{def:L-B}.} 

We now elaborate \R{Theta_m-def}. Given $f \in \cK \subseteq \cY$, the adaptive sampling operator yields $m$ measurements belonging to the {underlying} Banach space $\cV$. The term $\cT$ is an arbitrary \textit{reconstruction map}, which takes a vector of $m$ $\cV$-valued measurements  and produces an approximation in $\cX$. Thus, $\Theta_m(\cK ; \cY,\cX)$ pertains to the optimal sampling and reconstruction maps for minimizing the worse-case recovery error over the set $\cK$. Note that the choice of $\cY$ determines the type of allowed sampling operators. If $\cY = C(\cU ; \cV)$ {is the space of continuous functions from $\cU$ to $\cV$  with respect to the uniform norm},
 for example, then this includes (adaptive) pointwise sampling, whereas if $\cY = \cX$ then it does not. {Note, however, that $\cY$ plays a quite minor role in our analysis: our lower bounds (see \S \ref{ss:lower-bounds}) hold for arbitrary $\cY$, while our upper bounds (see \S \ref{ss:upper-bounds}) require the (mild) assumption that $\cY$ is compactly contained in $\cX$. {We provide some additional discussion on $\cY$ in Remark~\ref{rm:ChoiceY} below.}} 

Observe that \R{Theta_m-def} generalizes standard definitions \cite{binev2022optimal}, where the measurements $\cL(f) \in \bbR^m$ are scalar-valued. We introduce this extension to allow for Banach-valued measurements, as this is relevant for sampling-based approximation methods {for} parametric PDEs.

 Our main contributions in this work are lower and upper bounds for \R{Theta_m-def} when  $\cK$ is taken as $\cK = \cH(\bm{b})$ in the case of known anisotropy and $\cK = \cH(p)$ or $\cK = {\cH(p,\mathsf{M})}$  in the case of unknown anisotropy. 
 For convenience, we define
 \be{
 \label{theta-upsilon-known-aniso}
\theta_m(\bm{b}) = \Theta_m(\cH(\bm{b}) ; \cY, L^2_{\varrho}(\cU;\cV)), 
 }
 in the case of known anisotropy. Observe that this is the $m$-width where the optimal reconstruction map can (and generally will) depend on the anisotropy parameter $\bm{b}$. As discussed above, our interest lies in the case where $\bm{b} \in \ell^p(\bbN)$ or $\bm{b} \in \ell^p_{\mathsf{M}}(\bbN)$ for some $0 < p <1$. Thus, we also define
 \begin{equation}\label{eq:def_theta} 
\begin{split}
\overline{\theta_m}(p) &= \sup \left \{ \theta_m(\bm{b}) : \bm{b} \in \ell^p(\bbN),\ \bm{b} \in [0,\infty)^{\bbN},\  \nm{\bm{b}}_p \leq 1 \right \},
\\
\overline{\theta_m}(p,{\mathsf{M}}) &= \sup \left \{ \theta_m(\bm{b}) : \bm{b} \in \ell^p_{\mathsf{M}}(\bbN),\ \bm{b} \in [0,\infty)^{\bbN},\  \nm{\bm{b}}_{p,\mathsf{M}} \leq 1 \right \}.
\end{split}
\end{equation} 
Finally, in the unknown anisotropy setting, we define 
 \begin{equation}\label{theta-upsilon-unknown-aniso}
\theta_m(p) = \Theta_m(\cH(p) ; \cY, L^2_{\varrho}(\cU;\cV)  ), \quad 
\theta_m(p,{\mathsf{M}}) = \Theta_m(\cH(p,{\mathsf{M}}) ;\cY, L^2_{\varrho}(\cU;\cV)  ).
 \end{equation}
{
Notice that $\theta_m(p)$ is not equivalent to $\overline{\theta_m}(p)$, and likewise for $\theta_m(p,\mathsf{M})$ and $\overline{\theta_m}(p,\mathsf{M})$. The former pertains to reconstruction maps that are not permitted to depend on $\bm{b}$, whereas the latter pertains to reconstruction maps that can. In particular, we have}
\begin{equation*}
{ \theta_m(p) \geq \overline{\theta_m}(p) \geq \overline{\theta_m}(p,\mathsf{M}) \text{ and } \theta_m(p,\mathsf{M}) \geq \overline{\theta_m}(p,\mathsf{M}).}
 \end{equation*}

 \begin{remark}\label{rm:ChoiceY}
 {Note that one cannot set $\cY=\cK$ in \R{theta-upsilon-known-aniso} or \R{theta-upsilon-unknown-aniso}, since $\cK$ is not a linear subspace of $L^2_\varrho(\cU;\cV)$. In the known anisotropy case, a natural choice would be to set $\cY = \cG(\bm{b})$, where $\cG(\bm{b})$ is vector space of $(\bm{b},1)$-holomorphic functions equipped with the $L^{\infty}(\cR(\bm{b});\cV)$-norm. In this case, $\cH(\bm{b})$ is the unit ball of $\cG(\bm{b})$. However, there is no similar such choice in the cases of $\cH(p)$ (or $\cH(p,\mathsf{M})$). It follows immediately from the definition that $\cH(p)$ is a union over $\bm{b} \in \ell^p(\bbN)$, $\nm{\bm{b}}_p \leq 1$ of unit balls of the subspaces $\cG(\bm{b})$, each equipped with a different norm. Therefore, $\cH(p)$ is contained in a union of subspaces
 \begin{equation*}
\cH(p) = \bigcup_{\|\bm{b}\|_p \leq 1} \cH(\bm{b}) \subset  \bigcup_{\|\bm{b}\|_p \leq 1} \cG(\bm{b}).
\end{equation*}
However, it is possible to show that the right-hand side is not itself a subspace. Therefore, there is no intrinsic choice for $\cY$ in the unknown anisotropy setting.}
\end{remark}

\subsection{Main results}\label{S:main_res}

\subsubsection{Lower bounds} \label{ss:lower-bounds}
We first consider lower bounds. {Note that these bounds hold for any choice of the normed vector space $\cY$ appearing in \R{Theta_m-def} {that contains $\cK = \cH(\bm{b})$ (known anisotropy case) or $\cK = \cH(p),\cH(p,\mathsf{M})$ (unknown anisotropy case)}.} 
We first consider the known anisotropy case. To state the corresponding result, we now recall the definition of the $\ell^2$-norm \textit{best $s$-term approximation error} of a sequence $\bm{c} \in \ell^2(\bbN)$. This is given by
\bes{
\sigma_s(\bm{c})_2 = \min \{ \nm{\bm{c} - \bm{z}}_2 : \bm{z} \in \ell^2(\bbN),\ | \mathrm{supp}(\bm{z}) | \leq s \},
}
where $\supp(\bm{z}) = \{ i : z_i \neq 0\}$ for $\bm{z} = (z_i)_{i \in \bbN} \in \bbR^{\bbN}$.

\begin{theorem}
[Known anisotropy; lower bounds and the rate $m^{1/2-1/p}$ ]
\label{thm:known-lower} 
Let  $m\geq1$ and $\varrho$ be a tensor-product probability  measure on $\cU$. Then the following hold. 
\begin{itemize}
\item[(a)]  For every   $\bm{b} \in [0,\infty)^{\bbN}$ with $\bm{b} \in \ell^1(\bbN)$, the $m$-width \R{theta-upsilon-known-aniso} satisfies
\bes{
\theta_m(\bm{b}) \geq c \cdot \sigma_m(\bm{b})_2,
}
where $c > 0$ depends on the measure $\varrho$  and $\|\b\|_1$ only.
\item[(b)] For every $0 < p <1$, the $m$-widths \R{eq:def_theta} satisfy
\begin{equation}\label{eq:part:b}
 \overline{\theta_m}(p)  
\geq \overline{\theta_m}(p,{\mathsf{M}}) \geq c \cdot 2^{-1/p} \cdot m^{1/2-1/p},
\end{equation}
 where $c > 0$ depends on the measure $\varrho$  only. 
\item[(c)] Let  $(g(n))_{n \in \bbN}$ be a positive {nondecreasing} sequence such that $$\sum_{n \in \bbN} (n g(n))^{-1} < \infty.$$  Then, for every $0 < p < 1$, there exists a ${\bm{b}} \in \ell^{p}_{\mathsf{M}}(\bbN)$, ${\bm{b}} \in [0,\infty)^{\bbN}$  
 such that 
\begin{equation}\label{eq:bound_g}
 \overline{\theta_m}(p) \geq \overline{\theta_m}(p,{\mathsf{M}}) \geq \theta_m(\bm{{b}})  \geq c'\cdot g(2m)^{-1/p} \cdot m^{1/2-1/p},
\end{equation}
where {$c'= c \cdot 2^{-1/p} \cdot\left(\sum_{n \in \bbN}(ng(n))^{-1} \right)^{-1/p}$ and  $c$ is the constant in (b).}
\end{itemize}
\end{theorem}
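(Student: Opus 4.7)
The strategy for all three parts is a two-step reduction: first, translate the continuous $m$-width over $\cH(\bm{b})$ into a discrete $m$-width problem on a finite-dimensional weighted $\ell^\infty$ ball via Lemma \ref{l:discrete-reduction-known}; then apply Gelfand/Kolmogorov-width lower bounds from the width appendix. The Banach-valued-to-scalar passage needed for step one is exactly what Definition \ref{def:L-B} provides: composing the sampling operator with $g \mapsto v g$ for a fixed unit $v \in \cV$ reduces to a scalar problem, so no generality is lost by working essentially with $\cV = \bbR$.

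For part (a), I would construct a linear embedding $\Psi_N : \bbR^N \to \cH(\bm{b})$ of the form $\Psi_N(\bm{c}) = v \sum_{j=1}^N c_j b_j^*\, \phi_j$, where $b_j^*$ is the $j$th entry of the nonincreasing rearrangement of $\bm{b}$ and $\phi_j$ is a normalized polynomial in the single variable $y_{\pi(j)}$ (with $\pi$ the rearrangement permutation) chosen so that $\{\phi_j\}$ is $L^2_\varrho(\cU)$-orthonormal. Orthonormality transfers the $L^2_\varrho(\cU;\cV)$-norm of $\Psi_N(\bm{c})$ to a weighted $\ell^2$-norm of $\bm{c}$. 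The holomorphic input is that, for any $\bm{\rho}$ admissible in \eqref{def:b-eps-holo}, the scalar constraint $b_j((\rho_j + \rho_j^{-1})/2 - 1) \leq 1$ controls the sup-norm of each $\phi_j$ on $\cR(\bm{b})$; tracking constants, $\Psi_N(B^\infty_N)$ lies in a fixed multiple of $\cH(\bm{b})$ depending on $\nm{\bm{b}}_1$. Pulling the $m$-width back through $\Psi_N$ and invoking the Gelfand-width lower bound for $B^\infty_N \hookrightarrow \ell^2_N$ from the appendix yields $\theta_m(\bm{b}) \gtrsim \sigma_m(\bm{b})_2$ after letting $N \to \infty$.

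Part (b) is then immediate with the choice $b_j^{(p)} = 2^{-1/p} j^{-1/p}$ (nonincreasing, so trivially in $\ell^p_\mathsf{M}$ with $\nm{\bm{b}^{(p)}}_p \leq 1$): Stechkin's inequality gives $\sigma_m(\bm{b}^{(p)})_2 \geq c \cdot 2^{-1/p} m^{1/2-1/p}$, and the chain $\overline{\theta_m}(p) \geq \overline{\theta_m}(p,\mathsf{M}) \geq \theta_m(\bm{b}^{(p)})$ follows from the definitions. Part (c) requires a \emph{single} $\bm{b}$ whose best $m$-term error nearly saturates $m^{1/2-1/p}$ for every $m$ at once. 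My plan is a dyadic block construction: let $n_k = 2^k$ and take $\bm{b}$ nonincreasing, equal to $a_k = C g(2^k)^{-1/p} 2^{-k/p}$ on $(n_{k-1}, n_k]$. The monotone $\ell^p$ budget collapses to $\sum_k 2^k a_k^p = C^p \sum_k g(2^k)^{-1}$, and Cauchy condensation (using monotonicity of $g$) bounds $\sum_k g(2^k)^{-1} \leq 2 \sum_n (ng(n))^{-1} < \infty$, pinning down $C$. For $m$ with $n_{k-1} < m \leq n_k$, the tail bound $\sigma_m(\bm{b})_2^2 \geq 2^k a_{k+1}^2$ combined with $a_{k+1} \asymp g(2m)^{-1/p} 2^{-k/p}$ delivers $\sigma_m(\bm{b})_2 \gtrsim g(2m)^{-1/p} m^{1/2-1/p}$, and (a) converts this into \eqref{eq:bound_g}.

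The main obstacle is the embedding in part (a): arranging the $\phi_j$ so that (i) they are $L^2_\varrho(\cU)$-orthonormal, (ii) their holomorphic extensions satisfy sharp pointwise bounds on $\cR(\bm{b})$ scaling correctly in $b_j$, and (iii) the final constant depends only on $\varrho$ and $\nm{\bm{b}}_1$. The remaining ingredients---Cauchy condensation, Stechkin's lemma, and the quoted Gelfand-width bound---are essentially bookkeeping.
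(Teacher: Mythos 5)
Your two-step strategy---reduce to a discrete width via Lemma \ref{l:discrete-reduction-known} and then apply Gelfand/Kolmogorov width results---is the paper's, but there are concrete problems in all three parts. In (a), ``invoking the Gelfand-width lower bound for $B^\infty_N\hookrightarrow\ell^2_N$'' elides the main computation: Proposition \ref{prop:lowerbound} in the appendix concerns unweighted $B^p_N$ with $p\leq1$ and does not apply to a weighted $\ell^\infty$ ball. The paper instead chains the duality result (Theorem \ref{thm:equal_dm}), the weight-transfer lemma (Lemma \ref{lem:widths_2}), and Stesin's exact formula (Theorem \ref{thm:stesin}) to compute $d^m(B^\infty_N(\bm{b}_I),\ell^2_N)=\big(\sum_{j=m+1}^N b_{\pi(j)}^2\big)^{1/2}$, from which the $\sigma_m(\bm{b})_2$ bound follows as $N\to\infty$. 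That calculation is the heart of (a), not bookkeeping.

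Parts (b) and (c) each contain an error. In (b), your $b_j^{(p)}=2^{-1/p}j^{-1/p}$ gives $\|\bm{b}^{(p)}\|_p^p=\tfrac12\sum_j j^{-1}=\infty$, so the sequence is not in $\ell^p(\bbN)$, let alone its unit ball; the paper uses the $m$-dependent flat sequence with $b_i=(2m)^{-1/p}$ for $i\leq2m$ and $b_i=0$ otherwise, which has $\|\bm{b}\|_p=1$ and $\sigma_m(\bm{b})_2=2^{-1/p}m^{1/2-1/p}$ exactly (the $m$-dependence is essential and is explicitly remarked on after the theorem). In (c), your tail bound $\sigma_m(\bm{b})_2^2\geq 2^ka_{k+1}^2$ produces a factor $g(2^{k+1})^{-1/p}$; converting this to the stated $g(2m)^{-1/p}$ would require $g(2^{k+1})\leq C\,g(2m)$, but since $2m\leq2^{k+1}$, monotonicity of $g$ gives the inequality in the \emph{wrong} direction, so the claimed $a_{k+1}\asymp g(2m)^{-1/p}2^{-k/p}$ holds only if $g$ is doubling, which is not assumed. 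The paper's simpler choice $b_i=c_{p,g}(ig(i))^{-1/p}$ avoids this entirely: the partial-sum bound $\sum_{i=m+1}^{2m}(ig(i))^{-2/p}\geq m\cdot(2m\,g(2m))^{-2/p}$ uses monotonicity of $g$ only in the favorable direction and delivers the $g(2m)^{-1/p}$ factor cleanly.
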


\pbk
Part (a)  of this result provides a lower bound for the $m$-width $\theta_m(\bm{b})$ in terms of the best $m$-term approximation error $\sigma_m(\bm{b})_2$. The inequality often referred to as Stechkin's inequality (see, e.g., the historical note \cite[Rem. 3.4]{adcock2022sparse} on the origins and naming of this inequality and \cite[Lem.\ 3.5]{adcock2022sparse}) shows that
\bes{
\sigma_m(\bm{b})_2 \leq \nm{\bm{b}}_p (m+1)^{1/2-1/p} ,
}
whenever $\bm{b} \in \ell^p(\bbN)$. {} Hence, the main contribution of part (b) is to show that the rate $m^{1/2-1/p}$ is, in effect, sharp when considering all possible (unit-norm)  $\bm{b} \in \ell^p(\bbN)$ or $\bm{b} \in \ell^p_{\mathsf{M}}(\bbN)$. However, it is notable that  the $\bm{b}$ that achieves this bound depends on $m$ and has equal entries, i.e., it corresponds to a class of the functions $\cH(\bm{b})$ that is completely isotropic (see the proof of Theorem \ref{thm:known-lower} for details). In part (c),  we prove that a nearly sharp lower bound of the form $m^{1/2-1/p}$ can be obtained for a fixed $\bm{b}$, which is independent of $m$ and determined only by a {nondecreasing} function $g$.  Specifically, the function $g$ can be  chosen {to grow} very slowly, such as $g(n) = \log^2(n+1)$ or even $g(n) = \log(n+1) (\log(\log(n+1)))^2$.  For further examples, we refer to  \cite[Ch.~3]{rudin1964principles}.  

\begin{theorem}
[Unknown anisotropy; lower bounds]
\label{thm:unknown-lower-1}
Let $m\geq1$, $\varrho$ be a tensor-product probability  measure  on $\cU$  and $0 < p <1$. Then the following hold.
\begin{itemize}
\item[(a)] The $m$-width $\theta_m(p)$ in \R{theta-upsilon-unknown-aniso} satisfies
\ea{
\label{theta-mp-lower}
\theta_m(p) & \geq c \cdot 
2 ^{1/2-2/p},
}
where $c> 0$ depends on the measure $\varrho$ only.
\item[(b)] The $m$-width $\theta_m(p,{\mathsf{M}})$ in \R{theta-upsilon-unknown-aniso} satisfies
\begin{equation}\label{eq:part:b_unknown}
 \theta_m(p,{\mathsf{M}})  
\geq \overline{\theta_m}(p,{\mathsf{M}}) \geq  c \cdot  2^{-1/p} \cdot m^{1/2-1/p},
\end{equation} 
where $c> 0$ depends on the measure $\varrho$ only. 
\end{itemize}
\end{theorem}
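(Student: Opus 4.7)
Part (b) is immediate: the inequality $\theta_m(p,\mathsf{M}) \geq \overline{\theta_m}(p,\mathsf{M})$ stated just after the definitions \eqref{theta-upsilon-unknown-aniso}, combined with the lower bound $\overline{\theta_m}(p,\mathsf{M}) \geq c \cdot 2^{-1/p} m^{1/2-1/p}$ from Theorem~\ref{thm:known-lower}(b), delivers the claim at once.

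For part~(a), the striking feature is that the bound does not decay with $m$: it says that without any monotone structure on $\bm{b}$, even unlimited adaptive samples cannot resolve $\cH(p)$, because $\cH(p) = \bigcup_{\|\bm{b}\|_p \leq 1} \cH(\bm{b})$ is a non-convex union whose adversary's active coordinates may be placed arbitrarily far out in $\bbN$. My plan is first to reduce to scalar-valued sampling via Def.~\ref{def:L-B}: pick unit vectors $v,w \in \cV$ and pass to the associated scalar adaptive operator $\widetilde{\cL}\colon \widetilde{\cY} \to \bbR^m$ with $\cL(vg) = w\widetilde{\cL}(g)$, reducing the task to producing a scalar $g$ with $vg \in \cH(p)$ and reconstruction error $\geq c \cdot 2^{1/2-2/p}$.

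Next, construct the adversarial family. For each $k \in \bbN$ set $g_k(\bm{y}) = \alpha_p y_k$ with $\bm{b}^{(k)} = 2^{-2/p}\bm{e}_k$; a direct $(\bm{b},1)$-holomorphy computation (bounding $|y_k|$ on $\cR(\bm{b}^{(k)})$ via the admissible range $\rho_k \leq 1 + 2^{2/p}$) forces the maximal scale $\alpha_p \asymp 2^{-2/p}$, so that $vg_k \in \cH(\bm{b}^{(k)}) \subset \cH(p)$. By the tensor-product structure of $\varrho$ and orthogonality of distinct $y_k$'s, one has $\|g_{k_1} - g_{k_2}\|_{L^2_\varrho(\cU)} = \alpha_p C\sqrt{2} \asymp C \cdot 2^{1/2-2/p}$ for any $k_1 \neq k_2$, where $C^2 = \int_{-1}^{1} y^2 \D \rho$. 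The plan is then to exhibit two indices $k_1 \neq k_2$ for which $\widetilde{\cL}(g_{k_1}) = \widetilde{\cL}(g_{k_2})$; the common reconstruction forces, via the triangle inequality, $\max_j \|vg_{k_j} - w\widetilde{\cT}(\widetilde{\cL}(g_{k_j}))\|_{L^2_\varrho(\cU;\cV)} \geq \tfrac12 \|g_{k_1} - g_{k_2}\|_{L^2_\varrho(\cU)}$, which yields the claim. The structural hook is that the zero-fiber $V := \widetilde{\cL}^{-1}(\bm{0})$ is a closed linear subspace of codimension at most $m$, because the adaptive conditions unroll on the zero-fiber to $g \in V \iff L_1(g) = L_2(g; \bm{0}) = \cdots = L_m(g; \bm{0}, \ldots, \bm{0}) = 0$, each a bounded linear functional in $g$. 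Applied to the family $\{y_k\}_{k \in \bbN}$, this produces a bounded orbit in $\bbR^m$ against which an accumulation argument furnishes two indices on which all $m$ functionals agree.

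The \emph{main obstacle} lies in upgrading this accumulation to an exact-equality statement, since $\widetilde{\cT}$ need not be continuous and compactness alone only yields arbitrarily close sample vectors. Bridging this gap requires either refining the candidate family to generate only finitely many distinct sample patterns (so that pigeonhole returns exact coincidences), or, alternatively, realizing some nonzero element of $V \cap \mathrm{span}\{y_k\}_{k \in \bbN}$ directly as a two-support difference $y_{k_1} - y_{k_2}$ after scaling by exploiting the infinite dimensionality of $\mathrm{span}\{y_k\}$ against the finite codimension of $V$. The specific constant $2^{1/2 - 2/p}$ is pinned down by the scaling $\bm{b}^{(k)} = 2^{-2/p}\bm{e}_k$ in the candidate construction---chosen precisely so that admissibility in $\cH(p)$ forces $\alpha_p \asymp 2^{-2/p}$, from which orthogonality then produces the exponent $1/2 - 2/p$. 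Without this refined two-index argument one recovers only the $m$-dependent bound $O(m^{1/2-1/p})$ of part~(b); the $m$-independence of part~(a) is exactly what the exact-coincidence extraction buys.
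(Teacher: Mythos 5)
Your part~(b) is exactly the paper's argument: it is indeed an immediate consequence of Theorem~\ref{thm:known-lower}(b) and the inequality $\theta_m(p,\mathsf{M}) \geq \overline{\theta_m}(p,\mathsf{M})$.

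Your part~(a), however, departs fundamentally from the paper and has a gap that you correctly identify but cannot close with the proposed remedies. The paper proves~(a) via Lemma~\ref{l:discrete-reduction-unknown}, which lower-bounds $\theta_m(p)$ by $C(\tau,\sigma)\,d^m(B^p_N,\ell^2_N)$ for \emph{any} $N$, and then invokes the Gelfand-width lower bound for $\ell^p$-balls (Proposition~\ref{prop:lowerbound}, originating from Foucart--Pajor--Rauhut--Ullrich): choosing $N$ exponentially large in $m$ makes the $\min\{\cdots\}$ term equal to $1$, yielding $d^m(B^p_N,\ell^2_N)\geq 2^{1/2-2/p}$ independently of $m$. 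Crucially, that argument tests against \emph{all} linear combinations $\sum_j c_j y_{k_j}$ with $\|\bm{c}\|_p \leq 1$, not just the individual coordinate functions. Your adversarial family consists only of the singletons $g_k \propto y_k$, and this is too thin.

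Concretely, the gap: the zero-fiber argument for adaptive maps gives a linear subspace $V$ of codimension $\leq m$ such that any $g\in V$ satisfies $\widetilde{\cL}(g) = \widetilde{\cL}(-g) = \widetilde{\cL}(0)$. To exploit this you need a symmetric pair $\pm g$ inside $V$ that also lies in (a rescaling of) $\cH(p)$. In the span of $\{y_k\}$ the subspace $V$ has infinite dimension, but a codimension-$m$ subspace can easily avoid the thin, non-linear set of two-support differences $\{y_{k_1}-y_{k_2}\}$ (e.g., in $\bbR^3$, $\{x: x_1+2x_2+4x_3 = 0\}$ contains no $e_i - e_j$). Moreover, for \emph{adaptive} $\widetilde{\cL}$, the condition $\widetilde{\cL}(g_{k_1}) = \widetilde{\cL}(g_{k_2})$ is not equivalent to $g_{k_1}-g_{k_2} \in V$, so even if such a difference were in $V$ you could not conclude. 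The compactness/accumulation route also fails, since closeness of sample vectors gives nothing for an arbitrary discontinuous $\widetilde{\cT}$, and pigeonhole does not produce exact coincidences from a countable set in $\bbR^m$. What the paper does, in effect, is replace your ``find a coincidence among the $g_k$'' step by ``find a large-$\ell^2$ element of $B^p_N \cap V$,'' which is precisely what the Gelfand-width estimate supplies and what your singleton family cannot.

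Separately, your normalization $\bm{b}^{(k)} = 2^{-2/p}\bm{e}_k$ has $\|\bm{b}^{(k)}\|_p = 2^{-2/p} < 1$, so it is not the tight admissible choice; the constant $2^{1/2-2/p}$ in the theorem ultimately comes from the $(1/2)^{2/p-1/2}$ factor in Proposition~\ref{prop:lowerbound}, not from a scaling of a single coordinate function. The conceptual point --- that the non-decaying bound reflects the absence of ordering on the variables --- is right, but realizing it requires the full $\ell^p$-ball reduction.
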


Part (a) of this theorem{, more precisely the factor $2^{1/2-2/p}$,} shows that approximation from finite samples is, in fact, impossible in the space $\cH(p)$, since the $m$-width does not decay as $m \rightarrow \infty$. This is perhaps unsurprising. Functions in $\cH(p)$ are anisotropic, but their (infinitely-many) variables can be ordered in terms of importance in arbitrary {and infinitely-many different} ways. It seems implausible that one could approximate such functions from a finite set of data. This result confirms this intuition.

However, part (b) of this theorem reveals that the situation changes completely when we restrict to the monotone space $\cH(p,{\mathsf{M}})$. Here the lower bound for the $m$-width  is once more $m^{1/2-1/p}$. 

\subsubsection{Upper bounds}\label{ss:upper-bounds}

We now present a series of upper bounds for the various $m$-widths. {In the following results, we make the additional (mild) assumption that the normed vector space $\cY$ appearing in \R{Theta_m-def} is compactly contained in $L^2_\varrho(\cU;\cV)$, i.e.,  $\cY \hookrightarrow L^2_\varrho(\cU;\cV)$.}

\begin{theorem}
[Known anisotropy; upper  {bounds}]
\label{thm:unknown-upper-1}
Let  $m\geq1$, $\varrho$ be the uniform probability measure  on $\cU$ and $0 < p <1$. Then the following hold.
\begin{itemize}
\item[(a)]   {The} $m$-width \eqref{theta-upsilon-known-aniso} satisfies
\begin{equation}
\theta_m(\bm{b}) \leq c \cdot m^{1/2-1/p}, \quad \forall \bm{b} \in \ell^p(\bbN),\ \bm{b} \in [0,\infty)^{\bbN},
\end{equation}
where $c>0$ depends on $\bm{b}$ and $p$ only. Moreover, this bound is attained by a {bounded linear (nonadaptive) sampling map $\cL$ and a bounded linear reconstruction operator $\cT$.} 


\item[(b)] In addition, for any $q \in (p,1)$ 
 the $m$-width \eqref{eq:def_theta} satisfies 
\begin{equation}
\label{theta-mpM-upper}
\overline{\theta_m}(p,{\mathsf{M}}) \leq c \cdot m^{1/2-1/q},
\end{equation}
where $c> 0$ depends on $p$ and $q$ only.
\end{itemize}

\end{theorem}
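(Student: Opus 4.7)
The plan is to reduce both parts to existing best $s$-term polynomial approximation theorems for $(\bm{b},\varepsilon)$-holomorphic functions (Corollary~\ref{cor:known} for part (a) and Corollary~\ref{cor:anchored_sigma} for part (b)), and then convert the resulting theoretical approximations into concrete linear sampling-and-recovery schemes by using $\cV$-valued Legendre coefficients as measurements. Since $\cY$ is compactly, hence continuously, embedded in $L^2_\varrho(\cU;\cV)$, each coefficient functional $f \mapsto \int_\cU f(\bm{y})\Psi_\nu(\bm{y})\,\D\varrho(\bm{y})$ is a bounded linear map from $\cY$ to $\cV$, so the sampling operator defined below conforms with Def.~\ref{def:L-B}.

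For part (a), since $\bm{b}$ is available to the construction, Corollary~\ref{cor:known} supplies, for each $m$, a multi-index set $S = S(\bm{b},m)$ with $|S| \leq m$ such that
\begin{equation*}
\sup_{f \in \cH(\bm{b})} \left\| f - \sum_{\nu \in S} c_\nu(f)\, \Psi_\nu \right\|_{L^2_\varrho(\cU;\cV)} \leq C(\bm{b},p)\cdot m^{1/2-1/p},
\end{equation*}
where $\{\Psi_\nu\}$ denotes the tensorised Legendre basis (orthonormal under the uniform measure $\varrho$) and $c_\nu(f) := \int_\cU f(\bm{y})\Psi_\nu(\bm{y})\,\D\varrho(\bm{y}) \in \cV$. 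I would then define the nonadaptive linear sampling operator $\cL(f) = (c_\nu(f))_{\nu \in S} \in \cV^m$ together with the linear reconstruction $\cT((c_\nu)_{\nu \in S}) = \sum_{\nu \in S} c_\nu \Psi_\nu \in L^2_\varrho(\cU;\cV)$. Boundedness of $\cL$ is inherited from $\cY \hookrightarrow L^2_\varrho(\cU;\cV)$, whereas boundedness of $\cT$ follows from the Banach-valued Bessel/Parseval identity in $L^2_\varrho(\cU;\cV)$. By orthonormality, $\|f - \cT(\cL(f))\|_{L^2_\varrho(\cU;\cV)}^2 = \sum_{\nu \notin S} \|c_\nu(f)\|_\cV^2$, which is exactly the quantity controlled by Corollary~\ref{cor:known}; this yields (a).

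For part (b), the index set must now be chosen \emph{uniformly} over all $\bm{b}$ with $\|\bm{b}\|_{p,\mathsf{M}} \leq 1$, since otherwise the constant $C(\bm{b},p)$ from Corollary~\ref{cor:known} could blow up along the supremum defining $\overline{\theta_m}(p,\mathsf{M})$. I would therefore take $S$ to be an initial segment of size $m$ in a fixed enumeration of anchored sets (independent of $\bm{b}$) and invoke Corollary~\ref{cor:anchored_sigma}, which gives
\begin{equation*}
\sup_{f \in \cH(\bm{b})} \left\| f - \sum_{\nu \in S} c_\nu(f)\, \Psi_\nu \right\|_{L^2_\varrho(\cU;\cV)} \leq C(p,q)\cdot m^{1/2-1/q}, \quad \forall q \in (p,1),
\end{equation*}
uniformly in $\bm{b} \in \ell^p_\mathsf{M}(\bbN)$ with $\|\bm{b}\|_{p,\mathsf{M}} \leq 1$. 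The loss from $1/p$ to $1/q$ is the standard price for insisting that $S$ be $\bm{b}$-independent. The operators $\cL$ and $\cT$ are then defined exactly as in part (a), and taking the supremum over $\bm{b}$ yields \eqref{theta-mpM-upper}.

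The main obstacle is bookkeeping rather than analysis: I need to make sure that the Banach-valued Bessel/Parseval identity and the anchored-set estimate (Corollary~\ref{cor:anchored_sigma}) carry through with clean, uniform constants in the Lebesgue--Bochner setting, and to keep careful track of the fact that each $c_\nu(f)$ is a vector in $\cV$ rather than a scalar. Once these orthogonality and summability statements are set up in the $\cV$-valued setting, the entire construction is explicit, linear, and nonadaptive, which also justifies the parenthetical claim in part (a) that the bound is attained by a bounded linear sampling map together with a bounded linear reconstruction.
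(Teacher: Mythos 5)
Part (a) is correct and matches the paper's argument essentially verbatim: Legendre coefficients as nonadaptive bounded linear measurements, Parseval's identity to reduce to tail coefficients, and Corollary~\ref{cor:known} for the quantitative bound.

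Part (b), however, has a genuine gap. First, you misread what needs to be uniform: $\overline{\theta_m}(p,\mathsf{M})$ is defined as $\sup_{\|\bm{b}\|_{p,\mathsf{M}}\le 1}\theta_m(\bm{b})$, and $\theta_m(\bm{b})$ already permits the index set $S$ (and indeed the whole reconstruction) to depend on $\bm{b}$. So a $\bm{b}$-independent set $S$ is not required here --- that stronger requirement is the content of Theorem~\ref{thm:unknown-upper-2}, not \ref{thm:unknown-upper-1}(b). What \emph{does} have to be controlled uniformly is the constant in front of $m^{1/2-1/q}$, and this is exactly where your argument breaks down. Your displayed inequality asserts that Corollary~\ref{cor:anchored_sigma} produces a constant $C(p,q)$ independent of $\bm{b}$, but the corollary's constant is $C_{\mathsf{A}}(\bm{b},p)$, which a priori depends on $\bm{b}$. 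Nor does the ``initial segment of a fixed enumeration of anchored sets'' construction follow from the corollary, which furnishes a $\bm{b}$-dependent anchored set; without a hyperbolic-cross-type containment (and the attendant blowup in $N$ relative to $m$, which would wreck the rate) there is no way to extract a single $\bm{b}$-independent $S$ of cardinality $m$ this way. The paper's route avoids both issues: since $\ell^p_{\mathsf{M}}(\bbN)\subset\ell^q(\bbN)$ for $q>p$, one simply applies part~(a) at level $q$ for each fixed $\bm{b}$, obtaining $\theta_m(\bm{b})\le C(\bm{b},q)\,m^{1/2-1/q}$, and then invokes Lemma~\ref{cor:supr}, which is precisely the nontrivial uniformity statement $\sup_{\|\bm{b}\|_{p,\mathsf{M}}\le 1}C(\bm{b},q)\le c_{p,q}$. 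Your proof is missing this uniform-constant lemma, and without it (or its anchored analogue, Lemma~\ref{cor:supr_Anchored}) the bound does not go through.
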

Part (a) shows that in the case of known anisotropy for a fixed $\bm{b}$ we can achieve a rate   $m^{1/2-1/p}$ as an upper bound with a constant depending on  $\bm{b} \in \ell^p(\bbN)$. In contrast, part (b) provides a uniform bound over all $\bm{b}$ belonging to the unit ball of the monotone $\ell^p$ space $\ell^p_{\mathsf{M}}(\bbN)$. The key difference is the algebraic rate, which can be arbitrarily close to $m^{1/2-1/p}$, but not equal to it.  {Currently, we do not have an explicit expression for how the constant $c$ in part (b) depends on $p$ and $q$ and, in particular, how it behaves as $q \rightarrow p^{+}$, besides the knowledge that it must blow up. In particular, the question of whether \R{theta-mpM-upper} may in fact hold} with $q = p$ is an open problem and, if true, its proof would require a different technique. {See Remark \ref{rem:constants-blowup} for some additional discussion.}

Notice also that part (b) only considers the monotone space $\ell^p_{\mathsf{M}}(\bbN)$. We suspect that achieving a uniform bound for all $\bm{b}$ belonging to the unit ball of the standard $\ell^p$ space $\ell^p(\bbN)$ may not be possible. However, this is also an open problem.

{
Both Theorem \ref{thm:unknown-upper-1} and the next result are proved using Legendre polynomials. In Theorem \ref{thm:unknown-upper-1}, the sampling operator $\cL$ computes $m$ Legendre coefficients of $f$ from a suitable index set (depending on $\bm{b}$ only). The reconstruction map then simply forms the corresponding Legendre polynomial expansion. {In particular, the reconstruction map is linear.} This approach is possible in the setting of known anisotropy, since we can choose the index set in terms of $\bm{b}$.
}

{
In the unknown anisotropy setting, we resort to a different approach based on compressed sensing \cite{foucart2013mathematical,adcock2022sparse}. The idea is to view the polynomial coefficients as an (approximately) sparse vector, whose significant entries are unknown (since $\bm{b}$ is unknown). An optimal, stable way in compressed sensing to `sense' a sparse vector is to use an $m \times N$ Gaussian random matrix $\bm{A}$.  We therefore choose the sampling operator $\cL$ so that the recovery problem for the coefficients reduces to the problem of recovering a sparse vector from random Gaussian measurements. Specifically, $\cL$ computes $m$ random weighted sums of the Legendre coefficients of $f$ where the weights are i.i.d.\ normal random variables (a similar idea was also used in a different context in \cite{adcock2021do}).  Finally, we formulate the (nonlinear) reconstruction map in terms of a convex $\ell^1$-minimization problem.
}

{
In order to use this approach, we first need to restrict the infinite vector of polynomial coefficients to a finite vector of length $N$. This needs to be done carefully so that no large coefficients are excluded from the resulting finite vector. This is where we use the additional assumption $\bm{b} \in \ell^p_{\mathsf{M}}(\bbN)$ and the properties of anchored sets. Concretely, we construct a finite vector by including all polynomial coefficients within a certain \textit{hyperbolic cross} index set, which has the property that it contains all anchored sets of a given size (depending on $m$). See \S \ref{S:proof2.4} for further details. Crucially, this gives a recipe for performing the truncation in which the length $N = N(m)$ of the finite vector is given in terms of $m$ only.
}

{
For notational convenience, in the following result we introduce a normal random vector $\bm{r} \sim \cN(0,I_{\ell})$, $\ell= m N$, which contains the entries of the aforementioned Gaussian random matrix $\bm{A}$.
}

\begin{theorem}
[Unknown anisotropy; upper bounds]
\label{thm:unknown-upper-2}
Let $\cV$ be a Hilbert space,  $m \geq 3$ and $\varrho$ be the uniform probability measure  on $\cU=[-1,1]^{\bbN}$.
Then  there exists an $\ell=\ell(m)$, a (nonlinear) reconstruction map  $\cT_{\bm{r}} : \cV^m \rightarrow L^2_{\varrho}(\cU ; \cV)$ and a (nonadaptive)  bounded linear sampling operator  $\cL_{\bm{r}} : \cY  \rightarrow \cV^m$  depending on a random vector $\bm{r} \sim \cN(0,I_\ell)$, where $I_\ell$ is the $\ell \times \ell$ identity matrix, such that  the following holds.
\begin{itemize}

\item[(a)]   We have
\begin{equation*}
\bbE_{\bm{r} \sim \cN(0,I_\ell) } \sup_{f \in \cH(\bm{b})} \nmu{f - \cT_{\bm{r}}(\cL_{\bm{r}}(f))}_{L^2_{\varrho}(\cU ; \cV)} \leq c \cdot \left( \dfrac{m}{\log^2(m)}\right)^{1/2-1/p},
\end{equation*}
for all $\bm{b} \in \ell^p_{\mathsf{M}}(\bbN)$, $\bm{b} \in [0,\infty)^{\bbN} $, and $0 < p < 1$, where $c > 0$ depends on $\bm{b}$ and $p$ only. 

\item[(b)] For {$0<p<q<1$}, the $m$-width \R{theta-upsilon-unknown-aniso} satisfies
\begin{equation*}
\theta_m(p,{\mathsf{M}}) \leq \bbE_{\bm{r} \sim \cN(0,I_\ell) }  \sup_{f \in \cH(p,\mathsf{M})} \nmu{f - \cT_{\bm{r}}(\cL_{\bm{r}}(f))}_{L^2_{\varrho}(\cU ; \cV)} \leq  c \cdot {m}^{1/2-1/q},
\end{equation*}
where $c > 0$ depends on $p$ and $q$ only. 
\end{itemize}
\end{theorem}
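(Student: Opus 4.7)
The plan is to expand $f$ in the tensor Legendre basis, truncate to a well-chosen hyperbolic cross index set $\Lambda \subset \cF$, and recover the (approximately sparse) vector of $\cV$-valued Legendre coefficients via random Gaussian measurements together with Hilbert-valued $\ell^1$ minimization, in the spirit of the outline sketched just before the theorem. Writing $f = \sum_{\bnu \in \cF} c_\bnu L_\bnu$ with coefficients $c_\bnu \in \cV$, Parseval's identity gives $\|f\|_{L^2_\varrho(\cU;\cV)}^2 = \sum_{\bnu \in \cF} \|c_\bnu\|_\cV^2$. I would take $\Lambda = \Lambda_N \subset \cF$ to be a hyperbolic cross of cardinality $N = N(m)$ chosen so that $\Lambda$ contains every anchored set of size $s = s(m)$; a standard construction achieves this with $N \lesssim s^\alpha$ for an absolute exponent $\alpha$. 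This truncation is the step where the monotone assumption $\bm{b} \in \ell^p_\mathsf{M}(\bbN)$ is used.

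Next, let $\bm{A} = (A_{i\bnu}) \in \bbR^{m \times N}$ have i.i.d.\ $\cN(0,1/m)$ entries, which together comprise the random vector $\bm{r} \sim \cN(0, I_\ell)$ with $\ell = mN$. I would define the nonadaptive bounded linear sampling operator
\begin{equation*}
\cL_{\bm{r}}(f) = \left( \sum_{\bnu \in \Lambda} A_{i\bnu} \int_\cU f(\by) L_\bnu(\by) \, \D \varrho(\by) \right)_{i=1}^m \in \cV^m,
\end{equation*}
so that $\cL_{\bm{r}}(f) = \bm{A}\bm{c}_\Lambda$, and the reconstruction map
\begin{equation*}
\cT_{\bm{r}}(\bm{d}) = \sum_{\bnu \in \Lambda} \widehat{c}_\bnu L_\bnu, \qquad \widehat{\bm{c}} \in \arg\min_{\bm{z} \in \cV^N} \sum_{\bnu \in \Lambda} \|z_\bnu\|_\cV \ \text{ subject to } \ \bm{A}\bm{z} = \bm{d}.
\end{equation*}

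By orthonormality,
\begin{equation*}
\|f - \cT_{\bm{r}}(\cL_{\bm{r}}(f))\|_{L^2_\varrho(\cU;\cV)}^2 = \sum_{\bnu \notin \Lambda} \|c_\bnu\|_\cV^2 + \|\bm{c}_\Lambda - \widehat{\bm{c}}\|_{\ell^2(\cV)}^2.
\end{equation*}
The first (truncation) term is bounded by the anchored best $s$-term approximation error of the coefficient sequence, which by Corollary \ref{cor:anchored_sigma} is $\lesssim C(\bm{b}) \cdot s^{1/2-1/p}$ for $\bm{b} \in \ell^p_\mathsf{M}(\bbN)$. The second (recovery) term I would control using the standard compressed-sensing theory for Gaussian matrices, adapted to Hilbert-valued signals either component-wise in an orthonormal basis of $\cV$ or directly via a vector-valued robust null space property: whenever $m \gtrsim s \log(N/s)$, this property holds for $\bm{A}$ with probability at least $1 - \exp(-c m)$ in $\bm{r}$ and yields $\|\bm{c}_\Lambda - \widehat{\bm{c}}\|_{\ell^2(\cV)} \lesssim \sigma_s(\bm{c}_\Lambda)_1^{\cV}/\sqrt{s} \lesssim C(\bm{b}) \cdot s^{1/2-1/p}$. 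Balancing $m \asymp s \log(N/s)$ against $N \lesssim s^\alpha$ gives $s \asymp m/\log^2(m)$; integrating over the exponentially small failure event in $\bm{r}$ (whose contribution is harmless thanks to the uniform $L^\infty$ bound on $f$ built into $\cH(\bm{b})$) then delivers the expected-error rate asserted in (a).

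Part (b) I would deduce from (a) by reusing the same random pair $(\cL_{\bm{r}}, \cT_{\bm{r}})$ and noting that $C(\bm{b})$ remains uniformly bounded on $\{\bm{b} : \|\bm{b}\|_{p,\mathsf{M}} \leq 1\}$; the logarithmic factors are then absorbed into the exponent via $(\log m)^{1/p - 1/2} \lesssim_{p,q} m^{1/p - 1/q}$ for any $q \in (p,1)$, yielding $\theta_m(p,\mathsf{M}) \lesssim m^{1/2-1/q}$. The main technical obstacle is the Hilbert-valued compressed-sensing step: one must establish a robust null space (or restricted isometry) property for the Gaussian matrix $\bm{A}$ acting on $\cV$-valued sparse vectors, together with a sharp enough tail bound on the recovery constant to make the passage from \emph{with high probability} to \emph{in expectation} harmless. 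A secondary bookkeeping-level difficulty is the precise control of $|\Lambda|$ as a function of $s$ in the hyperbolic cross construction, since it is exactly this exponent that determines the $\log^2(m)$ factor in (a).
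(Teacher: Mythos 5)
Your proposal follows essentially the same route as the paper's proof: Legendre expansion, truncation to the hyperbolic-cross index set $\Lambda^{\mathsf{HCI}}_n$ (which contains all anchored sets of a given size), a Gaussian matrix $\bm{A}$ of entries $\cN(0,1/m)$ encoded in $\bm{r}$, a Hilbert-valued basis-pursuit reconstruction, the robust null space property to control the recovery error, and Corollary~\ref{cor:anchored_sigma} for the truncation error. Your exact-orthogonality split of the error into truncation plus recovery terms is a small cosmetic improvement over the paper's triangle inequality, and your choice of an exponentially small failure event (rather than the paper's polynomially small $\epsilon = (m/\log^2 m)^{1/2-1/p}$) is a legitimate alternative balancing; both make the failure contribution to the expectation negligible once you have the deterministic fallback bound $\|f - \cT_{\bm{r}}(\cL_{\bm{r}}(f))\| \lesssim \|\bm{c}\|_1 \lesssim C(\bm{b},p)$ as in Theorem~\ref{Thm:Prob}(b).

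There is, however, a genuine gap in your argument for part (b). You write that part (b) follows by ``noting that $C(\bm{b})$ remains uniformly bounded on $\{\bm{b}:\|\bm{b}\|_{p,\mathsf{M}}\le 1\}$'' and then absorbing the logarithm into the exponent. But the constant in your part (a) bound is $C(\bm{b},p)$, and the uniform boundedness of $C(\bm{b},p)$ over the unit ball of $\ell^p_{\mathsf{M}}(\bbN)$ is precisely the open problem flagged in Remark~\ref{rem:constants-blowup}; if it held, one would actually obtain the sharp rate $m^{1/2-1/p}$ in part (b) without any loss in the exponent. The correct step, which the paper carries out, is to first use the inclusion $\ell^p_{\mathsf{M}}(\bbN) \subset \ell^q_{\mathsf{M}}(\bbN)$ for $q\in(p,1)$ and apply Theorem~\ref{Thm:Prob} at the level $q$ (giving constant $C(\bm{b},q)$ and rate $(m/\log^2 m)^{1/2-1/q}$), and only then take the supremum over the $\ell^p_{\mathsf{M}}$ unit ball using Lemmas~\ref{cor:supr} and~\ref{cor:supr_Anchored}, which show $\sup_{\|\bm{b}\|_{p,\mathsf{M}}\le 1} C(\bm{b},q) < \infty$ precisely because $q>p$. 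The strict inequality $q>p$ is where the loss in the exponent enters; it cannot be recovered simply by trading a polylog factor against a power of $m$.
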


Part (b) of this theorem shows that the lower bounds in Theorem \ref{thm:unknown-lower-1}(b) can be nearly achieved by a (nonadaptive) random sampling operator and reconstruction map. Here, ``nearly'' means  with a rate that can be arbitrarily close to $1/2-1/p$, but not equal to it {. As in the previous result, we also have no explicit expression for the constant $c$ in part (b). This is for much the same reasons as those discussed in Remark \ref{rem:constants-blowup}}. By contrast, Theorem \ref{thm:unknown-upper-2}(a) shows that the algebraic rate $1/2-1/p$ can be achieved, but with a constant depending on $\bm{b}$. Crucially, however, the sampling operator and reconstruction map are both independent of $\bm{b}$ and $p$ {(and $q$)}. In other words, part (a) is also a result about the unknown anisotropy setting, even though the supremum is taken over $\cH(\bm{b})$.

{Theorems \ref{thm:unknown-upper-1} and \ref{thm:unknown-upper-2} consider only the uniform probability measure on $\cU$. We anticipate they hold for more general tensor-product probability measures, such as Jacobi measures. We leave this for future work.} We also note in passing that Theorem \ref{thm:unknown-upper-2} assumes that $\cV$ is a Hilbert space. Whether these rates hold in the Banach-valued case remains an open problem.

\section{Proof of main theorems}\label{S:proofs}

We now prove the main results, Theorems \ref{thm:known-lower}--\ref{thm:unknown-upper-2}.

\subsection{Widths and standard results on widths}\label{S:apxA}

We commence with a brief overview of Gelfand widths and their properties, since these will be crucial in the proofs of the lower bounds. See \cite{pinkus1968n-widths} or \cite[Ch.~10]{foucart2013mathematical} for more details. Let $\cK$ be a subset of a  normed space $(\cX,\nm{\cdot}_{\cX})$. Then its \textit{Gelfand $m$-width} is
\begin{equation}\label{def:mwidths}
d^m(\cK,\cX) = \inf \left \{ \sup_{x \in \cK \cap L^m} \nm{x}_{\cX},\ \text{$L^m$ a subspace of $\cX$ with $\mathrm{codim}(L^m) \leq m$} \right \}.
\end{equation}
An equivalent representation is
\bes{
d^m(\cK,\cX) = \inf \left \{ \sup_{x \in \cK \cap \mathrm{Ker}(A)} \nm{x}_{\cX},\ A : \cX \rightarrow \bbR^m\text{ linear} \right \}.
}
The Gelfand width is related to the following quantity:
\begin{equation}\label{eq:def-E_ada}
E^m_{\mathsf{ada}}(\cK,\cX) = \inf \left \{ \sup_{x \in \cK} \nm{x - \Delta(\Gamma ( x)) }_{\cX},\ \Gamma : \cX \rightarrow \bbR^m\text{ adaptive},\ \Delta : \bbR^m \rightarrow \cX \right \},
\end{equation}
where $\Gamma $ is an adaptive sampling operator  as in Def.\ \ref{def:adaptive-sampling-scalar}.
This is referred to as the \textit{adaptive compressive $m$-width} of a subset $\cK$ of a normed space $(\cX,\nm{\cdot}_{\cX})$ in \cite[Ch.~10]{foucart2013mathematical}. Note that,  if $\cX = L^2_{\varrho}(\cU ; \bbR)$ {and $\cV = \bbR$}, then  $E^m_{\mathsf{ada}}(\cK,\cX)$  {coincides with} $\Theta_m(\cK;\cX,\cX)$ in \eqref{Theta_m-def}.

The following result is standard and can be found in \cite[Thm.\ 10.4]{foucart2013mathematical}.
\begin{theorem}\label{thm:widths_prop}
Let  {$\cK \subseteq \cX$,} where $(\cX,\nm{\cdot}_{\cX})$ is a normed space. If $-\cK = \cK$ then $$d^m(\cK,\cX) \leq E^m_{\mathsf{ada}}(\cK,\cX).$$
\end{theorem}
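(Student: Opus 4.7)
The plan is to convert any admissible pair $(\Gamma, \Delta)$ for $E^m_{\mathsf{ada}}(\cK, \cX)$ into an admissible subspace for $d^m(\cK, \cX)$, using the symmetry $-\cK = \cK$ to control the norm on this subspace by the recovery error.

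First, I would fix an adaptive sampling operator $\Gamma$ with components $L_1, \ldots, L_m$ as in Def.~\ref{def:adaptive-sampling-scalar} and a reconstruction map $\Delta : \bbR^m \rightarrow \cX$, and consider the null set $\cN := \{ x \in \cX : \Gamma(x) = 0 \}$. The key claim is that $\cN$ is a linear subspace of $\cX$ with $\mathrm{codim}(\cN) \leq m$. To see this, observe that on $\ker(L_1)$ the second component of $\Gamma$ reduces to $L_2(x; L_1(x)) = L_2(x; 0)$, which is linear in $x$ by hypothesis. Iterating, on $\bigcap_{j < i} \ker(\tilde{L}_j)$, where $\tilde{L}_j(x) := L_j(x; 0, \ldots, 0)$, the $i$th component of $\Gamma$ coincides with $\tilde{L}_i(x)$. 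Hence $\cN = \bigcap_{i=1}^m \ker(\tilde{L}_i)$ is an intersection of $m$ kernels of linear functionals on $\cX$, giving the codimension bound.

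Next, I would exploit the assumption $-\cK = \cK$. For any $x \in \cK \cap \cN$, both $\pm x \in \cK$ satisfy $\Gamma(\pm x) = 0$, so $\Delta(\Gamma(\pm x)) = \Delta(0)$. Writing $2x = (x - \Delta(0)) - (-x - \Delta(0))$ and applying the triangle inequality gives
\begin{equation*}
\|x\|_{\cX} \leq \tfrac{1}{2}\bigl( \|x - \Delta(\Gamma(x))\|_{\cX} + \|-x - \Delta(\Gamma(-x))\|_{\cX}\bigr) \leq \sup_{y \in \cK} \|y - \Delta(\Gamma(y))\|_{\cX}.
\end{equation*}
Taking the supremum over $x \in \cK \cap \cN$ on the left-hand side, then using that $\cN$ is admissible in the definition \eqref{def:mwidths} of $d^m(\cK,\cX)$, and finally taking the infimum over all admissible pairs $(\Gamma,\Delta)$ on the right-hand side yields the claimed inequality.

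The main obstacle is the codimension claim in the first step. This relies crucially on the ``linear in first component'' clause of Def.~\ref{def:adaptive-sampling-scalar}: when all prior measurements vanish, the adaptive feedback to $L_i$ becomes trivial, collapsing $L_i(x; L_1(x), \ldots, L_{i-1}(x; \cdots))$ into the genuinely linear functional $\tilde{L}_i(x) = L_i(x; 0,\ldots,0)$. Once this structural observation is in place, the rest is a standard symmetry argument, and the factor of $\tfrac{1}{2}$ extracted from $-\cK = \cK$ is precisely what prevents a constant from appearing in the inequality.
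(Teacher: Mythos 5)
Your proof is correct and is precisely the standard argument. The paper does not reproduce a proof of this theorem but instead cites \cite[Thm.~10.4]{foucart2013mathematical}, where the same two steps appear: collapsing the adaptive map on its null set to the common kernel of the genuinely linear functionals $x \mapsto L_i(x;0,\dots,0)$, a subspace of codimension at most $m$, and then exploiting $-\cK = \cK$ via the averaging identity $2x = (x-\Delta(0)) - (-x-\Delta(0))$ to bound $\sup_{x\in\cK\cap\cN}\nm{x}_{\cX}$ by the worst-case recovery error.
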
 
Finally, we also define the \textit{Kolmogorov $m$-width} of a subset $\cK$ of a normed space $(\cX,\nm{\cdot}_{\cX})$  as
\bes{
d_m(\cK,\cX) = \inf \left \{ \sup_{x \in \cK} \inf_{z \in \cX_m} \nm{x - z}_{\cX},\text{ $\cX_m$ a subspace of $\cX$ with $\dim(\cX_m) \leq m$} \right \}.
}

\subsection{Proof of Theorem \ref{thm:known-lower}}

{The proofs of our lower bounds} rely on the reduction of the {continuous} problem to a {discrete one}. First, we prove a holomorphy result of order-one polynomials. {This allows us to lower bound the $m$-width $\theta_m(\bm{b})$ in terms of a certain discrete problem.} 
Having done this, we then use certain results on the Gelfand and Kolmogorov widths of weighted $\ell^p$ balls to get the {desired} bound.

We start by making the following observation. Let $\rho $  be  a probability  measure as in \S\ref{S:function_spaces} and recall that $\varrho$ is the tensor-product probability measure defined in \eqref{tensor-product-measure}. 
{Since $\rho$ is a nonnegative Borel probability measure on the interval $[-1,1]$} its moments exist \cite[Sec. 6.1]{Walter2001wavelets}. Now let
\begin{equation}\label{def:alphabeta}
\tau = \int^{1}_{-1} y \D \rho(y), \quad \text{ and } \quad  \sigma = \sqrt{\int^{1}_{-1} (y-\tau)^2 \D \rho(y)},
\end{equation}
{be the first and second moment of $\rho$ and notice that $\tau, \sigma < \infty$.}
Then {the functions}
\bes{
\psi_i(\bm{y}) = \frac{y_i - \tau}{\sigma},\quad \bm{y} = (y_j)_{j \in \bbN} \in \cU,\ i \in \bbN,
}
{form an orthonormal set $\{\psi_i\}_{i \in \bbN} \subset L^2_\varrho(\cU)$ (but not a basis).} 
Furthermore, if $v \in \cV$ is a unit vector we easily obtain an orthonormal system  of $L^2_{\varrho}(\cU;\cV) $ of the form $\{ v \psi_i \}_{i \in \bbN}$.
\begin{lemma}
[Holomorphy of order one polynomials]
\label{lm:Holomorphyorderone}
Let $\varrho$ be a tensor-product probability measure as in \eqref{tensor-product-measure}, $p \in (0,1]$, $\bm{b} \in [0,\infty)^{\bbN}$ with $\bm{b} \in \ell^p(\bbN)$, $v \in \cV\setminus \{ 0\}$ and consider a sequence $\bm{c} = (c_i)_{i \in \bbN} \subset \bbR^{\bbN}$ with  $|c_i| \leq b_i$ for all $i \in \bbN$. Define the function
\begin{equation}\label{eq:def_f}
f  = \sum^{\infty}_{i=1} c_i v \psi_i.
\end{equation}
Then $f$ is $(\bm{b},1)$-holomorphic with
\begin{equation*}
\nmu{f}_{L^{\infty}(\cR({\bm{b}});\cV)} \leq \frac{\|v\|_{\cV}}{\sigma} \left ( 1 + (|\tau|+1) \nm{\bm{c}}_{p} \right ).
\end{equation*}
\end{lemma}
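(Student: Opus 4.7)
The plan is to rewrite
\begin{equation*}
f(\bm{z}) = \sum_{i=1}^{\infty} c_i v \psi_i(\bm{z}) = \frac{v}{\sigma} \sum_{i=1}^{\infty} c_i (z_i - \tau),
\end{equation*}
and then establish holomorphy on $\cR(\bm{b})$ and the $L^\infty$ bound simultaneously by showing absolute uniform convergence of the series on each polyellipse $\cE_{\bm{\rho}}$ appearing in the union \R{def:b-eps-holo}. Each partial sum is a polynomial of degree one (hence holomorphic on $\bbC^{\bbN}$), so uniform convergence on $\cE_{\bm{\rho}}$ will automatically yield holomorphy of $f$ on $\cE_{\bm{\rho}}$, and then on $\cR(\bm{b})$ by taking the union over all admissible $\bm{\rho}$.

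Fix $\bm{\rho} \in [1,\infty)^{\bbN}$ satisfying $\sum_i b_i\bigl((\rho_i + \rho_i^{-1})/2 - 1\bigr) \leq 1$ and $\bm{z} \in \cE_{\bm{\rho}}$. For each $i$, writing $z_i = (w + w^{-1})/2$ with $1 \leq |w| \leq \rho_i$ (and trivially when $\rho_i = 1$), and using the monotonicity of $t \mapsto t + t^{-1}$ on $[1,\infty)$, one has the standard bound $|z_i| \leq (\rho_i + \rho_i^{-1})/2$. Consequently,
\begin{equation*}
|z_i - \tau| \leq \frac{\rho_i + \rho_i^{-1}}{2} + |\tau| = \left(\frac{\rho_i + \rho_i^{-1}}{2} - 1\right) + (1 + |\tau|).
\end{equation*}

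The main step is then an elementary decomposition. Using $|c_i| \leq b_i$ only against the nonnegative factor $(\rho_i + \rho_i^{-1})/2 - 1$ and the defining constraint of $\cR(\bm{b})$,
\begin{equation*}
\sum_{i=1}^\infty |c_i| \left(\frac{\rho_i + \rho_i^{-1}}{2} - 1\right) \leq \sum_{i=1}^\infty b_i \left(\frac{\rho_i + \rho_i^{-1}}{2} - 1\right) \leq 1,
\end{equation*}
while the leftover contribution is $(1 + |\tau|)\sum_i |c_i| = (1 + |\tau|)\|\bm{c}\|_1$. Since $0 < p \leq 1$, the embedding $\ell^p \hookrightarrow \ell^1$ gives $\|\bm{c}\|_1 \leq \|\bm{c}\|_p$, so
\begin{equation*}
\sum_{i=1}^\infty |c_i| |z_i - \tau| \leq 1 + (1 + |\tau|)\|\bm{c}\|_p.
\end{equation*}
This bound is uniform in $\bm{z} \in \cE_{\bm{\rho}}$, so the series defining $f$ converges absolutely and uniformly on $\cE_{\bm{\rho}}$; hence $f$ is holomorphic there as a uniform limit of polynomials, and multiplying by $\|v\|_\cV/\sigma$ yields the stated norm estimate. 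As $\bm{\rho}$ was arbitrary in the admissible set, $f$ is holomorphic on all of $\cR(\bm{b})$, and the bound on $\|f\|_{L^\infty(\cR(\bm{b});\cV)}$ follows by taking the supremum.

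I do not anticipate a genuine obstacle: the entire argument reduces to the decomposition above plus the monotonicity $\|\bm{c}\|_1 \leq \|\bm{c}\|_p$ for $p \leq 1$. The only point requiring a moment of care is the passage from uniform convergence to holomorphy in the infinite-dimensional sense, which is standard since each partial sum is a finite-dimensional polynomial and the convergence is uniform on the polyellipse.
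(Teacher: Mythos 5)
Your proof is correct and follows essentially the same route as the paper: you use $|z_i - \tau| \leq \bigl((\rho_i+\rho_i^{-1})/2 - 1\bigr) + (1+|\tau|)$, apply $|c_i|\leq b_i$ against the first term and the defining constraint of $\cR(\bm{b})$, and use $\|\bm{c}\|_1 \leq \|\bm{c}\|_p$ for $p\leq 1$, exactly as in the paper's argument. The only difference is that you spell out the uniform-convergence-implies-holomorphy step a bit more explicitly, which the paper leaves implicit.
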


\prf{
Notice that $(|{c}_i|)_{i \in \bbN} \in \ell^1(\bbN)$ and that $f$ is holomorphic at any $\bm{y} \in \cU$ for which the series $\sum^{\infty}_{i=1} c_i v \psi_i (\bm{y})$  converges absolutely. Now suppose that $\bm{y} \in \cE_{\bm{\rho}}$, where $\bm{\rho}$ satisfies the condition in  \eqref{def:b-eps-holo} with $\varepsilon = 1$. Then $|y_i| \leq (\rho_i + \rho^{-1}_i)/2$ and therefore
\eas{
\left \| \sum^{\infty}_{i=1} c_i v \psi_i(\bm{y}) \right \|_{\cV} & \leq \sum^{\infty}_{i=1} \|c_i v \|_{\cV} \left ( \frac{|y_i| + |\tau|}{\sigma} \right )
\\
& \leq \frac{\|v\|_{\cV}}{\sigma} \left ( \sum^{\infty}_{i=1} |c_i| \left ( (\rho_i + \rho^{-1}_i)/2 - 1 \right )  + \nm{\bm{c}}_{1}+ |\tau| \nm{\bm{c}}_{1} \right )
\\
& \leq \frac{\|v\|_{\cV}}{\sigma} \left ( 1 + (|\tau|+1) \nm{\bm{c}}_{p}  \right ),
}
as required.
}
 
In the next result, we relate the $m$-width $\theta_m(\bm{b})$  in \eqref{theta-upsilon-known-aniso} to the Gelfand $m$-width of a certain finite-dimensional unit ball.  

\lem{
[Reduction to a discrete problem; known anisotropy case]
\label{l:discrete-reduction-known}
Let $\varrho$ be a tensor-product probability measure as in \eqref{tensor-product-measure}, $\bm{b} \in [0,\infty)^{\bbN}$ with $\bm{b} \in \ell^1(\bbN)$. Let $N \in \bbN$ and $I \subset \bbN$ be an index set with $|I| = N$. Then the $m$-width \eqref{theta-upsilon-known-aniso} satisfies 
\begin{equation}
\label{eq:bound_theta_finite}
\theta_m(\bm{b}) \geq C(\b,\tau,\sigma) \cdot d^m(B^{\infty}_N(\bm{b}_I),\ell^2_N),
\end{equation}
where   $B^{\infty}_N(\bm{b}_I)$ is as in \eqref{eq:defB_weighted} and 
\begin{equation}\label{eq:def_C}
C(\b,\tau,\sigma)=   \frac{{\sigma  }   }{  1+(1+|\tau|)\|\bm{b}\|_1   }.
\end{equation}
}

 {Observe that the bound \eqref{eq:bound_theta_finite} holds trivially when $N \leq m$, since $d^m(B^{\infty}_N(\bm{b}_I),\ell^2_N)=0$ in this case (see \cite[Sec.~10.1]{foucart2013mathematical}). }

\begin{proof}
Let $\cL$ be as in Def.\ \ref{def:L-B}. Then there are $v \in \cV \backslash \{0\}$, $w \in \cV \backslash \{0\}$ and a normed vector space $ \widetilde{\cY} \subseteq L^2_\varrho(\cU)$   such that $\cL(v g) = w \widetilde{\cL}(g)$ whenever $v g \in \cY$ and $g \in L^2_{\varrho}(\cU)$, where $\widetilde{\cL} : \widetilde{\cY} \rightarrow \bbR^m$  is as in Def.\ \ref{def:adaptive-sampling-scalar}. Now let $\bm{c} = (c_i)_{i \in I} \subset \bbR^{\bbN}$ be any sequence supported in $I$ with  $|c_i| \leq b_i$, $\forall i \in I$, and define $f : \cU \rightarrow \cV$  by
\begin{equation}\label{eq:def_fLm4.2}
f =  c v \sum_{i \in I}c_i  \psi_i ,\qquad c = \frac{\sigma}{\nm{v}_{\cV}( 1 + (|\tau|+1) \nm{\bm{b}}_{1}) }.
\end{equation}
Since $\nm{\bm{c}}_1 \leq \nm{\bm{b}}_1$, Lemma \ref{lm:Holomorphyorderone} implies that $f \in \cH(\bm{b}) \subseteq \cY$. By definition, we have
\bes{
\cL(f) = w \widetilde{\cL}(g),\qquad g = c  \sum_{i \in I} c_i \psi_i \in \widetilde{\cY}.
}
Now let $\Gamma : \bbR^{N} \rightarrow \bbR^m$ be the scalar-valued adaptive sampling operator defined by
\bes{
\Gamma(\bm{d}) = \widetilde{\cL} \left ( c \cdot \sum_{i \in I} d_{i} \psi_i \right ),\quad \bm{d}=(d_i)_{i \in I} \in \bbR^{N}. 
}
Here, for convenience, we index vectors in $\bbR^N$ using the index set $I$. We need to show that this operator is well defined. Since $\widetilde{\cL}$ has domain $\widetilde{\cY}$, this is equivalent to showing that   $\sum_{i \in I} d_i \psi_i \in \widetilde{\cY}$ for all $\bm{d}=(d_i)_{i \in I}  \in \bbR^N$. To see this, recall that $\cH(\bm{b}) \subset \cY$ and therefore
\bes{
\cH_0(\bm{b}) : =\{ f : \cU \rightarrow \cV, \text{$(\bm{b},1)$-holomorphic} \} \subseteq \cY,
}
since $\cY$ is a vector space. Now, for any $\bm{d}=(d_i)_{i \in I}  \in \bbR^N$, the function $\sum_{i \in I} d_i\psi_i$ is entire, therefore
\bes{
v \cdot \sum_{i \in I} d_i \psi_i \in \cH_0(\bm{b}) .
}
Hence,   $\sum_{i \in I} d_i \psi_i  \in \widetilde{\cY}$  due to Def.\ \ref{def:L-B}, and therefore $\Gamma$ is well defined.

With this in hand, recall that $\bm{c} \in \bbR^{\bbN}$ is zero outside of the index set $I$. Hence we may consider it as an element of $\bbR^N$ indexed over $I$. Using this and the definition of $\Gamma$, we have $\cL(f) = w \widetilde{\cL}(g) = w \Gamma(\bm{c}) = (w (\Gamma(\bm{c}))_i)^{m}_{i=1} \in \cV^m$. 

Now let $\cT: \cV^m \rightarrow L^2_{\varrho}(\cU;\cV)$ be an arbitrary  reconstruction map and let $\widetilde{\cT}: \bbR^m  \rightarrow L^2_{\varrho}(\cU;\cV)$ be defined by
\begin{equation*}
\widetilde{\cT} (\bm{z}) = \cT ( w \cdot \bm{z}),\quad \forall \bm{z} \in \bbR^m.
\end{equation*}
  Observe that
\begin{equation}\label{eq:def_Ttilda}
\cT ( \cL (f) ) =  \cT ( w \cdot \Gamma(\bm{c})) =  \widetilde{\cT} ( \Gamma(\bm{c})).
\end{equation}
Let $\cV^*$ be the dual space of $\cV$.  From the Hahn--Banach theorem (see \cite[Thm.~3.3]{rudin1991functional})  there exists a linear bounded functional $\phi_v^* \in  \cV^*$  with unit norm such that $\phi_v^*(v)=\|v\|_{\cV}$. Using this fact and the definition of a norm in Banach spaces, for every $\bm{y} \in \cU$, we get 
\begin{align*}
\| f(\bm{y})- \cT \circ \cL(f)(\bm{y})\|_{\cV} 
&= \sup_{\substack{\phi^* \in \cV^*,  \|\phi^*\|_{\cV^*}=1 }} |\ip{\phi^*}{ f(\bm{y})- \cT \circ \cL(f)(\bm{y})}_{\cV^* \times \cV}| \\
& \geq |\ip{\phi_v^*}{ f(\bm{y})- \cT \circ \cL(f)(\bm{y})}_{\cV^* \times \cV}| \\
&  = \left | c' \sum_{i \in I} c_i \psi_i(\bm{y})  - \ip{\phi_v^*}{\cT \circ \cL(f)(\bm{y})}_{\cV^* \times \cV} \right | ,
\end{align*}
where $c' = c \nm{v}_{\cV} = \sigma / (1+(|\tau|+1) \nm{\bm{b}}_1)$.
Then, squaring and integrating  over  $\cU$, we can use Bessel's inequality on the rightmost term to obtain
\eas{
\nmu{f - \cT \circ \cL(f)}^2_{L^2_{\varrho}(\cU;\cV)} 
& \geq  \| c' \sum_{i \in I} c_i \psi_i  - \phi_v^* \circ \cT \circ \cL(f) \|^2_{L^2_{\varrho}(\cU)} 
\\
&  \geq \sum_{j \in I} |  \ip{\psi_j}{c' \sum_{i \in I} c_i \psi_i  - \phi_v^* \circ \cT \circ \cL(f)}_{L^2_{\varrho}(\cU)} |^2.
}
Combining this with \eqref{eq:def_Ttilda} we  obtain
\eas{
\nmu{f - \cT \circ \cL(f)}^2_{L^2_{\varrho}(\cU;\cV)} 
&\geq (c')^2 \sum_{j \in I} | c_j  -  \ip{\psi_j}{  \phi_v^* \circ \cT \circ \cL(f)}_{L^2_{\varrho}(\cU)} / c' |^2
\\
& = (c')^2 \sum_{j \in I} | c_j  - \ip{\psi_j}{  \phi_v^* \circ \widetilde{\cT}(\Gamma(\bm{c})) }_{L^2_{\varrho}(\cU)} /c'|^2.
}
Notice from the last term that every map $\widetilde{\cT} : \bbR^m \rightarrow L^2_{\varrho}(\cU;\cV)$ gives rise to  a mapping  $\bm{z} \mapsto \Delta(\bm{z})$ with $\Delta : \bbR^m \rightarrow \bbR^{N}$  via
\begin{equation}\label{eq:Delta_z}
\Delta(\bm{z}) = \left (   \ip{\psi_i}{\phi_v^* \circ\widetilde{\cT}(\bm{z}) }_{L^2_{\varrho}(\cU)} /c' \right )_{i \in I}= \left ( \int_{\cU} {\psi_i(\bm{y})} \cdot {\phi_v^* \left(\widetilde{\cT}(\bm{z})(\bm{y})\right)}\D \varrho (\bm{\bm{y}}) /c'\right )_{i \in I}.
\end{equation}
Hence we obtain 
\begin{equation*}
\nmu{f - \cT \circ \cL(f)}^2_{{L^2_{\varrho}(\cU;\cV)}} 
\geq (c')^2 \sum_{i \in I} |  c_i-(\Delta(\Gamma(\bm{c})))_{i} |^2 = (c')^2 \nmu{\bm{c}- \Delta({\Gamma(\bm{c})})}^2_2.
\end{equation*}
Thus, we have shown that for any pair $(\cL,\cT)$ and any $f$ of the form \R{eq:def_fLm4.2}, the error $\nmu{f - \cT \circ \cL(f)}_{{L^2_{\varrho}(\cU;\cV)}}$ can be bounded below by a constant times the error $\nmu{\bm{c}- \Delta({\Gamma(\bm{c})})}_2$ for some pair $(\Gamma,\Delta)$. 
Using this, we deduce that
\eas{
\theta_m(\bm{b}) & = \inf \left \{ \sup_{f \in \cH(\bm{b})} \nm{f - \cT(\cL(f))}_{{L^2_{\varrho}(\cU;\cV)}} : \cL : \cY \rightarrow \cV^m\ \text{adaptive},\ \cT : \cV^m \rightarrow {L^2_{\varrho}(\cU;\cV)} \right \}
\\
& \geq c' \inf \left \{ \sup_{\substack{\bm{c} \in \bbR^{N}, \bm{c} \neq \bm{0} \\ |c_i| \leq b_i, \forall i \in I }} \nmu{\bm{c} - \Delta(\Gamma(\bm{c})) }_2 : \Gamma : \bbR^{N} \rightarrow \bbR^m\text{ adaptive},\ \Delta : \bbR^m \rightarrow \bbR^{N} \right \}
\\
& \geq c'   E_{\mathsf{ada}}^{m}(B^{\infty}_N(\bm{b}_I), {\ell^2_N}),
}
where in the final inequality we recall \eqref{eq:def-E_ada}.
The result now follows from Theorem \ref{thm:widths_prop}. 
\end{proof}

  Next, we give a lower bound for the right-hand side of \eqref{eq:bound_theta_finite}.  For this, we make use of Theorem \ref{thm:stesin}, which is due to Stesin \cite{stesin1975aleksandrov}.  

\begin{proof}[Proof of Theorem \ref{thm:known-lower}] We first prove part (a). Let $N \in \bbN$ {with $N>m$} and $I \subset  \bbN$, $|I|=N$ be the index set corresponding to the largest $N$ entries of $\bm{b}$. First, using the duality result Theorem \ref{thm:equal_dm} and then Lemma \ref{lem:widths_2} we obtain
\be{
\label{d-sup-sub-m}
d^m(B^{\infty}_N(\bm{b}_I),\ell^2_N) = d_m(B^2_N,\ell^1_N(1/\bm{b}_I)) =  d_m(B^2_N(\bm{b}_I),\ell^1_N).
}
Here  $1/\bm{b}_I$ is the vector with entries $1/\bm{b}_I=(1/b_i)_{i \in I}$.
Next,  applying  Theorem \ref{thm:stesin} with $p=2$ and $q=1$ we see that the right-hand side of the previous equation satisfies
\begin{equation}\label{eq:max_sum}
d_m(B^2_N(\bm{b}_I),\ell_N^1) = \left ( \max_{\substack{i_1,\ldots,i_{N-m}\in I \\ i_k \neq i_j}} \left ( \sum^{N-m}_{j=1} (b_{i_j})^2 \right )^{-1/2} \right )^{-1}.
\end{equation}
Now, let $\pi: {\bbN} \mapsto \bbN$ be a bijection whose entries are a nonincreasing rearrangement of the vector  $\b$ in nonincreasing order. That is $\b_{\pi}= (b_{\pi(j)})_{j \in \bbN}$ is such that
\begin{equation*}
b_{\pi(1)} \geq b_{\pi(2)} \geq  \cdots b_{\pi(N)}  \geq \ldots \geq 0.
\end{equation*}
Observe that $\b_{I}$ has a one-to-one relation with the first $N$ terms of $\b_{\pi}$. Hence, the maximum in \eqref{eq:max_sum} is achieved  by  the last $N-m$ terms of  {$(\bm{b}_{\pi(i)})_{i=1}^{N}$}. Thus,  
\begin{equation*}
d_m(B^2_N(\bm{b}_I),\ell_N^1)  =  \min_{\substack{i_1,\ldots,i_{N-m} \in I \\ i_k \neq i_j}  } \left ( \sum^{N-m}_{j=1}  (b_{i_j})^2 \right)^{1/2} 
=   \left ( \sum^{N}_{j=m+1} b^{2}_{\pi(j)} \right)^{1/2}.
\end{equation*}
Moreover,  we know that  $\sigma_m(\b)_2^2 = \sum_{j=m+1}^{\infty} b_{\pi(j)}^2  $, which implies that
\bes{
\sigma_m(\b)_2^2  = \sum_{j=m+1}^N b_{\pi(j)}^2  +\sum_{j=N+1}^{\infty} b_{\pi(j)}^2 = (d_m(B^2_N(\bm{b}_I),\ell_N^1))^2  +\sum_{j=N+1}^{\infty} b_{\pi(j)}^2  .
}
Then, from  \R{d-sup-sub-m} and Lemma \ref{l:discrete-reduction-known} we obtain
\begin{align*}
\sigma_m(\b)_2^2  & \leq  C(\b,\tau,\sigma)^{-2} \cdot \theta_m^2(\b) +\sum_{j=N+1}^{\infty} b_{\pi(j)}^2, 
\end{align*}
where   $C(\b,\tau,\sigma)$ as in \eqref{eq:def_C}. Taking limit when $N \rightarrow \infty$ we obtain the result.

 Next we prove part (b). Consider the sequence $\bm{{b}} = ({b}_i)^{\infty}_{i=1}$ with
\begin{equation}\label{eq:def_b_2m}
{b}_i = (2m)^{-1/p}, \quad i=1, \ldots, 2m, \quad {b}_i=0, \, i>2m.
\end{equation}
Observe that $\bm{{b}} \in \ell^p_{\mathsf{M}}(\bbN)$ and  that $\|{\bm{{b}}}\|_{p,\mathsf{M}} = \|{\bm{{b}}}\|_p = 1$  by construction. Also,  note that 
\begin{equation*}
\sigma_m(\bm{{b}})_2 = \sqrt{\sum^{2m}_{i=m+1} (2m)^{-2/p} } = 2^{-1/p} m^{1/2-1/p}.
\end{equation*}
Thus using this and part (a) we get ${\overline{\theta_m}(p,\mathsf{M}) }\geq c \cdot \sigma_{m}(\bm{b})_2  \geq c \cdot 2^{-1/p} m^{1/2-1/p}$, where 
\begin{equation}\label{eq:other_c}
c=\frac{{\sigma  }   }{  1+(1+|\tau|)\|\bm{b}\|_p   } \leq  C(\b,\tau,\sigma).
\end{equation}

Finally, we prove part (c).  {Let $c_{p,g} = \left(\sum_{n \in \bbN}(ng(n))^{-1} \right)^{-1/p}$
and consider the sequence $\bm{b} = (b_i)^{\infty}_{i=1}$ defined by $b_i = c_{p,g} (i g(i))^{-1/p}$. Observe that $\nm{\bm{b}}_p=1$ by construction. Recall that $\ell^p_{\mathsf{M}}(\bbN)$ is the space of sequences whose minimal monotone majorant is in $\ell^p(\bbN)$, with norm defined as the $\ell^p$-norm of the majorant. Since the constructed $\bm{b}$ is mononotonically nonincreasing, it is equal to its minimal monotone majorant. Therefore, $\bm{b} \in \ell^p_{\mathsf{M}}(\bbN)$ and $\|{{\bm{b}}}\|_{p,\mathsf{M}} = \|{\bm{b}}\|_{p} = 1$.}
Then, using {monotonicity once more, we get that}
\begin{equation*}
\begin{split}
(\sigma_m({\bm{b}})_2)^2 = c_{p,g}^2\sum_{i=m+1}^\infty (i g(i))^{-2/p} \geq  c_{p,g}^2\sum_{i=m+1}^{2m} (i g(i))^{-2/p}
 \geq  {c_{p,g}^2 2^{-2/p}}
  \cdot  (g(2m))^{-2/p} m^{1-2/p}.
\end{split}
\end{equation*}
Hence, using  part (a) we get
\begin{equation*}
 \overline{\theta_m}(p,{\mathsf{M}})   
 \geq  c \cdot c_{p,g} 2^{-1/p}
  \cdot  (g(2m))^{-1/p}\cdot  m^{1/2-1/p},
\end{equation*}
where $c>0$ is the constant depending  on $\varrho$ in \eqref{eq:other_c}, as required.
\end{proof}

\subsection{Proof of Theorem \ref{thm:unknown-lower-1}}

We {first} proceed as in the proof of Theorem \ref{thm:known-lower}.  The following  lemma does for the $m$-width $\theta_m(p)$ what Lemma \ref{l:discrete-reduction-known} did for $\theta_m(\bm{b})$.

\lem{
[Reduction to a discrete problem; unknown anisotropy case]
\label{l:discrete-reduction-unknown}
Let  $p \in (0,1]$, $N \in \bbN$, $\varrho$ be a tensor-product probability measure  as in \eqref{tensor-product-measure} and $\tau$, $\sigma$ be as in \eqref{def:alphabeta}. Then the $m$-width \eqref{theta-upsilon-unknown-aniso} {satisfies}
\begin{equation*}
\theta_m(p) \geq C(\tau,\sigma) \cdot d^m(B^{p}_N,\ell^2_N),
\end{equation*}
where   $B^{p}_N$ is as in \eqref{eq:defB_weighted} with $\bm{w} = \bm{1}$ and 
\begin{equation}\label{eq:def_C_unknown}
C(\tau,\sigma)=      \frac{{\sigma  }   }{    2+|\tau|    } .
\end{equation}
}
\begin{proof}
Recall that $\theta_m(p)$ is defined by
\begin{equation*}
\theta_m(p) = 
 \inf \left \{ \sup_{f \in \cH(p)} \|f - \cT ( \cL(f)) \|_{L^2_\varrho (\cU;\cV)} : \cL :\cY \rightarrow \cV^m\text{ adaptive},\ \cT : \cV^m \rightarrow L^2_\varrho (\cU;\cV)\right \}.
\end{equation*}
Let $\cL : \cY\subset L^2_\varrho (\cU;\cV)  \rightarrow \cV^m$ be {a general adaptive sampling operator} as in Def.\ \ref{def:L-B} and $v$ be the corresponding nonzero element of $\cV $.  Consider $\bm{b} \in \ell^p(\bbN)$ with $\bm{b} \in [0,\infty)^{\bbN}$ and $\|\bm{b}\|_{p} \leq 1$,  
and let $\bm{c} = (c_i)_{i \in \bbN} \in \bbR^{\bbN}$ {be any} sequence supported in $[N]$ with  $|c_i| = b_i$ for  {$ i \in [N]$}. Define  the function 
\begin{equation}\label{eq:def_f_unknown}
f =  \frac{\sigma}{ (2+ |\tau|) \nm{v}_{\cV}} v  \sum^N_{i = 1} c_i  \psi_i.
\end{equation}
Lemma~\ref{lm:Holomorphyorderone} {implies} that $f \in \cH(\bm{b})$. We now use the same arguments as in the proof of Lemma \ref{l:discrete-reduction-known}  to obtain
\begin{equation*}
  \nmu{f - \cT \circ \cL(f)}_{L^2_{\varrho}(\cU;\cV)} 
 \geq c'   \nmu{\bm{c}- \Delta(\Gamma(\bm{c}))}_2,
\end{equation*}
where  {$c' = \sigma / (2+|\tau|)$} and {$\Gamma$, $\Delta$ are as before}. We next take the supremum over $\bm{b} \geq \bm{0}$ with $\bm{b} \in \ell^p ({\bbN})$ and $\|\bm{b}\|_{p}  \leq 1$ and all sequences $\bm{c}$ of the above form. Then we get
\bes{
\sup_{f \in \cH(p)}\nmu{f - \cT \circ \cL(f)}_{L^2_{\varrho}(\cU;\cV)} \geq c' \sup_{\substack{\bm{c} \in \ell^p(\bbN),\ \nm{\bm{c}}_p \leq 1 \\ \supp(\bm{c}) \subseteq [N]}} \nm{\bm{c} - \Delta(\Gamma(\bm{c}))}_2.
}
 Hence
\bes{
\theta_m(p) \geq c' \inf \left \{ \sup_{\substack{\bm{c} \in \ell^p(\bbN),\ \nm{\bm{c}}_p \leq 1 \\ \supp(\bm{c}) \subseteq [N]}}\nm{\bm{c} - \Delta(\Gamma(\bm{c}))}_2 : \Gamma : \bbR^{N} \rightarrow \bbR^m\text{ adaptive},\ \Delta : \bbR^m \rightarrow \bbR^{N}  \right \}.
}
Using this and \eqref{eq:def-E_ada} we see that
\bes{
\theta_m(p) \geq c' E^{m}_{\mathsf{ada}}(B^p_N,\ell^2_N).
}
We now apply Theorem \ref{thm:widths_prop}    with $\cK= B_N^p$ and $\cX=\ell^2_N$ to get the result.
\end{proof}

\begin{proof}
[Proof of Theorem \ref{thm:unknown-lower-1}] We first prove part (a). To do so, we use the bound obtained in Lemma \ref{l:discrete-reduction-unknown}.  Let $N \in \bbN$ be such that
\begin{equation}
N \geq m \E^{\frac{\log(3^8\E)}{2p}m-1}   \Leftrightarrow  \frac{ \frac{2p}{\log(3^8\E)}\log(\E N / m)}{m} \geq 1.
\end{equation}
Then, from  Proposition \ref{prop:lowerbound} with $q=2$ we obtain
\begin{equation*}
 d^m(B^{p}_N, \ell^2_N) \geq  \left( \frac{1}{2}\right)^{2/p-1/2}.
 \end{equation*} 
 Thus,
\begin{equation*}
\theta_m(p) \geq \frac{\sigma}{ 2 +|\tau|  }  
\left( \frac{1}{2}\right)^{2/p-1/2},
\end{equation*} 
as required.
Part (b)  follows immediately from part (b) of Theorem \ref{thm:known-lower} and the inequality $\theta_m({p,\mathsf{M}}) \geq \overline{\theta_m}(p,{\mathsf{M}})$. 
\end{proof}

\subsection{Proof of Theorem \ref{thm:unknown-upper-1}}

 As discussed in \S\ref{S:class_f}, polynomial approximation theory for the class  $\cH(\bm{b})$ has been extensively studied. In light of this, we will employ polynomial techniques to establish the two upper bounds presented in Theorems \ref{thm:unknown-upper-1} and \ref{thm:unknown-upper-2}. See Appendix \ref{Ap:poly} for further details on the relevant polynomial approximation theory. 
 We commence in this subsection with the proof of Theorem \ref{thm:unknown-upper-1}.

\begin{proof}[{Proof of Theorem \ref{thm:unknown-upper-1}}]
{The proof is divided into three parts. We first {construct a} sampling operator $\cL$ {and show that it is a well-defined sampling operator in the sense of Def.\ \ref{def:L-B}. Then we prove parts (a) and (b), respectively.}}

Consider  the setup described in \S\ref{ss:leg-poly-exp}. Specifically, let $\cF$ be the set of multi-indices with at most finitely-many zero entries and $\{ \Psi_{\bnu}\}_{\bnu \in \cF}$ be the orthonormal Legendre basis of $L^2_\varrho(\cU)$. Let $S \subset \cF$ be a finite index  of size $|S|=m$ that will be chosen later in the proof and $\cY \supseteq \cH(\bm{b})$ with $\cY \hookrightarrow L^2_{\varrho}(\cU ; \cV)$. We now define   $\cL:  \cY   \rightarrow \cV^m$  and  $\cT:  \cV^m \rightarrow L^2_{\varrho}(\cU;\cV)$ by
\begin{equation}\label{eq:Lf_Tv}
\cL (f) = \left( \ip{f}{\Psi_{\bnu}}_{L^2_{\varrho}(\cU)}\right)_{\bnu \in S} \text{ and }
\cT (\bm{v}) = \sum_{\bnu \in S} v_{\bnu}  \Psi_{\bnu},
\end{equation}
{for any $f \in \cY$ and $\bm{v} { = (v_{\bm{\nu}})_{\bm{\nu} \in S} } \in \cV^m$, respectively.}
{Observe that $\ip{f}{\Psi_{\bnu}}_{L^2_{\varrho}(\cU)} \in \cV$ are precisely the coefficients of the expansion of $f$ in \eqref{f-exp}. However, we keep this notation to emphasize that $\cL$ is a linear operator.}  

We first prove that $\cL$ {is well defined and} {that it satisfies the conditions of} Def.\ \ref{def:L-B}. By construction and the fact that $\cY \hookrightarrow L^2_{\varrho}(\cU ; \cV)$ we readily see that $\cL$ is a bounded linear operator. Therefore, it suffices to show there exists a normed vector space $\widetilde{\cY} \hookrightarrow L^2_\varrho(\cU)$, a nonzero $v \in \cV$, and a bounded, linear scalar-valued sampling operator $\widetilde{\cL}: \widetilde{\cY} \rightarrow \bbR^m$ such that, if $v g \in \cY$ {for $g \in  L^2_{\varrho}(\cU)$}, then $g \in \widetilde{\cY}$ and $\cL(vg)=v\widetilde{\cL}(g)$.
To this end, let $v \in \cV$, $\nm{v}_{\cV} = 1$, be arbitrary and define the space
 \bes{
 \widetilde{\cY} = \{ g \in L^2_{\varrho}(\cU) : v g \in \cY \}.
 }
 It is easily seen that this is a vector space and that the quantity $ \nm{g}_{\widetilde{\cY}} = \nm{v g}_{\cY}$, $\forall g \in \widetilde{\cY}$, defines a norm on $\widetilde{\cY}$.
 Moreover, using  the fact that $\nm{v}_{\cV} = 1$ and $\cY \hookrightarrow L^2_{\varrho}(\cU ; \cV)$, there exists a constant $C>0$ such that
 \bes{
  \nm{g}_{L^2_{\varrho}(\cU)} = \nm{ v g}_{L^2_{\varrho}(\cU ; \cV)} \leq C \nm{v g}_{\cY} = C \nm{g}_{\widetilde{\cY}}, \quad \forall g \in \widetilde{\cY}.
 }
 Hence $\widetilde{\cY} \hookrightarrow L^2_{\varrho}(\cU)$. We now define the scalar-valued sampling operator
 \bes{
 \widetilde{\cL}: \widetilde{\cY} \rightarrow \bbR^m,\quad \widetilde{\cL}(g) = \left( \ip{g}{\Psi_{\bnu}}_{L^2_{\varrho}(\cU)}\right)_{\bnu \in S}, \quad {\forall g \in \widetilde{\cY}}.
}
{Note that $\widetilde{\cL}$ is closely related to the operator $\cL$, except that it is defined over a space $\widetilde{\cY}$ consisting of scalar-valued functions.
Observe that $\widetilde{\cL}$ is} linear and bounded, with the latter property due to the fact that $\widetilde{\cY} \hookrightarrow L^2_{\varrho}(\cU)$. Moreover, if $v g \in \cY$ then $g \in \widetilde{\cY}$ and ${\cL}(vg)=v \widetilde{\cL}(g)$ by construction. Therefore $\widetilde{\cL}$ satisfies the desired properties. We deduce that $\cL$ {is a well defined operator that satisfies the conditions of} Def.\ \ref{def:L-B}, as required.

We now prove part (a) of Theorem \ref{thm:unknown-upper-1}. Let $f \in \cH(\bm{b})$. By \eqref{eq:Lf_Tv} and Parseval's identity we get
\begin{equation*}
 \|f - \cT \circ \cL (f) \|_{L^2_{\varrho}(\cU;\cV)}^2  = \sum_{\bnu \not\in S} {\|c_{\bnu}\|_{\cV}^2},
\end{equation*}
where $\bm{c}=(c_{\bnu})_{\bnu \in \cF}$ is as in \R{f-exp} (see \S\ref{ss:leg-poly-exp} for further information). Now we choose the specific set $S$.  Let $\bm{b} \in \ell^{p}(\bbN), \bm{b} \in [0,\infty)^{\bbN}$. From Corollary \ref{cor:known}, there exists a set $S \subset \cF$ of size $|S| = m$ depending on $\bm{b}$ and $p$ only such that 
\begin{equation*}
\left(\sum_{\bnu \not\in S} {\|c_{\bnu}\|_{\cV}^2} \right)^{1/2} \leq   C(\bm{b},p) \cdot m^{1/2-1/p},
\end{equation*}
where  $C(\bm{b},p)>0$ is  the  constant in Lemma \ref{lem:coeff_known}. Now, taking supremum over $f \in \cH(\bm{b})$ we get 
\begin{equation*}
\sup_{f \in \cH(\bm{b})}\|f - \cT \circ \cL (f) \|_{L^2_{\varrho}(\cU;\cV)} \leq C(\bm{b},p) \cdot m^{1/2-1/p}.
\end{equation*}
Since $\bm{b}$ was arbitrary, this completes the proof of part (a). 

Next we prove part (b). Let $q \in (p,1)$. Then any $\bm{b} \in \ell^p_{\mathsf{M}}(\bbN)$ satisfies $\bm{b} \in \ell^q(\bbN)$ and therefore
\bes{
\theta_m(\bm{b}) \leq C(\bm{b},q) \cdot m^{1/2-1/q}
}
by part (a).
Using this we get
\bes{
\overline{\theta_m}(p,\mathsf{M}) = \sup_{\nm{\bm{b}}_{p,\mathsf{M}} \leq 1} \theta_m(\bm{b}) \leq \sup_{\nm{\bm{b}}_{p,\mathsf{M}} \leq 1} C(\bm{b},q)  \cdot m^{1/2-1/q}.
}
The result now follows from Lemma \ref{cor:supr}.
\end{proof}

{
\begin{remark}
\label{rem:constants-blowup}
As shown in this proof, the constant $c = c_{p,q}$ in part (b) of Theorem \ref{thm:unknown-upper-1} comes from Lemma \ref{cor:supr}. Unfortunately, the dependence of this constant on $p$ and $q$ is unknown, since it depends on an abstract summability criterion (see, e.g., \cite[Lem.~3.29]{adcock2022sparse}) that does not give explicit upper bounds on the norm of the relevant sequence (in this case, the term $\nm{\tilde{\bm{g}}(p)}_{q}$ in \R{eq:Constant_C2}). A first step towards understanding the constant $c_{p,q}$ would involve modifying the proof of this result to provide explicit bounds. However, one can already deduce from the proof of Lemma \ref{cor:supr} that $c_{p,q}$ must blow up as $q \rightarrow p^{+}$, since the sequence $\tilde{\bm{h}}(p) \notin \ell^p(\bbN)$ (see \R{eq:step1}). Therefore, if part (b) of Theorem \ref{thm:unknown-upper-1} were to hold with $q = p$, its proof would require a different technique.
\end{remark}
}

\subsection{Proof of Theorem \ref{thm:unknown-upper-2}}\label{S:proof2.4}

As in the previous proof, we rely on Legendre polynomial expansions. Consider the setup of \S \ref{ss:leg-poly-exp} once more. In the previous proof, we made a judicious choice of index set $S \subset \cF$ depending on the term $\bm{b}$ and used it to define the sampling and reconstruction map that gave the desired bound.
Recall that $\bm{b}$ controls the nature of the anisotropy of the functions in $\cH(\bm{b})$ (see the discussion in \S\ref{S:known-unk}). Therefore, it is expected that a suitable choice of $S$ that obtains the desired error bound should depend on $\bm{b}$. However, in Theorem \ref{thm:unknown-upper-2} we consider the case of unknown $\bm{b}$. Therefore, we must proceed in a different way, in which the sampling operator $\cL$ and the reconstruction map $\cT$ do not rely on a specific choice of $S$. Fortunately, we can circumvent this issue by formulating a compressed sensing problem. Here, by using a suitable set of (random) measurements and a recovery procedure, we can construct an approximation that gives us the desired error bounds. 

In order to formulate a recovery procedure based on compressed sensing, we need to restrict our attention to a finite `search space' of multi-indices that contain all possible sets that yield the desired approximation rate. Unfortunately, the sets $S$ used in the previous proof (which are based on Corollary \ref{cor:known}) can potentially lie anywhere within the infinite cardinality set $\cF$ when one considers arbitrary $\bm{b} \in \ell^p(\bbN)$. Thus, their union is not a finite set. Luckily, this obstacle can be overcome by restricting to sequences in the monotone $\ell^p$ space, i.e., $\bm{b} \in \ell^p_{\mathsf{M}}(\bbN)$, and by utilizing the concept of anchored sets (see, e.g.,  \cite[Sec.~3.9]{adcock2022sparse}). In this case, it is known that when $\bm{b} \in \ell^p_{\mathsf{M}}(\bbN)$ one can obtain the same rate as in Corollary \ref{cor:known} with a set $S$ that is also anchored (see Corollary \ref{cor:anchored_sigma}).
 
This means that we can restrict the search space to one that contains the union of all anchored sets, which has the desirable property that it is itself a finite index set.
Specifically, given $n \in \bbN$, we now define the search space $\Lambda \subset \cF$ as the index set
\be{
\label{HC_index_set_inf}
\Lambda = \Lambda^{\mathsf{HCI}}_{n} = \left \{ \bm{\nu} = (\nu_k)^{\infty}_{k=1} \in \cF : \prod_{k:\nu_{k}\neq 0} (\nu_k + 1) \leq n,\ \nu_k = 0,\ k {\geq} n \right \} \subset \cF.
}
It is known that $\Lambda$ contains all anchored sets of size at most $n$ \cite[Prop.~2.18]{adcock2022sparse}. Note that the size of this index set is bounded by
\be{
\label{N_bound}
N := | \Lambda^{\mathsf{HCI}}_{n} | \leq  \E n^{2 + \log(n{-1})/\log(2)},\quad \forall n \in \bbN.
}
See {\cite[Lem.\ B.5]{adcock2022sparse}} (this result is based on \cite[Thm.\ 4.9]{kuhn2015approximation}). 
Given $\Lambda$ and $f$ with expansion \R{f-exp}, we now define the truncated expansion of $f$ based on the index set $\Lambda$ and its corresponding vector coefficients in $\cV^N$ as
\begin{equation}\label{eq:expan_lambda}
f_{\Lambda} = \sum_{\bnu \in \Lambda} c_{\bnu} \Psi_{\bnu}, \quad \bm{c}_{\Lambda} = (c_{\bnu_j})_{j=1}^N \in \cV^N.
\end{equation}
Here, $\bnu_{1}, \ldots, \bnu_{N}$ is some ordering of the multi-indices in $\Lambda$. 

In order to prove Theorem \ref{thm:unknown-upper-2}, we need some additional setup. Let $\cB(\cV^N, \cV^m)$ be the space of bounded linear operators from $\cV^N$ to $\cV^m$. In the following we consider operators $\bm{A} \in \cB(\cV^N, \cV^m)$  that arise from matrices $\bm{A}=(a_{i,j})_{i,j=1}^{m,N} \in \bbR^{m \times N}$ as follows:
\begin{equation}
\label{def-inducedBLO}
\bm{x} = (x_i)^{N}_{i=1} \in \cV^N \mapsto \bm{A} \bm{x} = \left ( \sum^{N}_{j=1} a_{ij} x_j \right )^{m}_{i=1} \in \cV^m.
\end{equation}
In this proof we also use the  $\ell^p(\cF;\cV)$- and $\ell^p([N];\cV)$-norms,   as  defined in \S\ref{ss:leg-poly-exp}.

\subsubsection{Setup and the vector recovery problem}\label{S:setup-unk}

We now describe the sampling operator and reconstruction map that are used to establish Theorem \ref{thm:unknown-upper-2}. 
Let $\bm{A}=1/\sqrt{m} (a_{i,j})_{i,j=1}^{m,N} \in \bbR^{m \times N}$, where the $a_{i,j}$ are independent Gaussian   random variables with zero mean and variance one. Note that the entries of $\bm{A}$ define the random vector $\bm{r}$ {by allocating each entry of $\bm{A}$  to an entry of $\bm{r}$. Observe that this gives
\be{
\label{r-distn-general}
\bm{r} \sim \cN(0,I_{m N}).
}
}
Now let $\Phi_{i,\bm{r}}: \cU \rightarrow \bbR$, $\Phi_{i,\bm{r}} = 1/\sqrt{m}\sum_{j=1}^N a_{i,j} \Psi_{\bnu_j}$ for $i=1,\ldots, m$, where $\lbrace \Psi_{\bnu}\rbrace_{\bnu \in \cF}$ are the Legendre polynomials (see \S\ref{ss:leg-poly-exp}). We next define the sampling operator $\cL_{\bm{r}}: \cY \subset L^2_{\varrho}(\cU;\cV) \rightarrow \cV^m$  by 
\begin{equation}\label{eq:L}
\cL_{\bm{r}} (f) = \left( \ip{f}{\Phi_{i,\bm{r}}}_{L^2_{\varrho}(\cU)} \right)_{i=1}^m= \bm{f}.
\end{equation}
Using the same argument as in the proof of Theorem \ref{thm:unknown-upper-1} with {$\Phi_{i,\bm{r}}$} in place of $\Psi_{\bnu}$, we deduce that $\cL_{\bm{r}}$ is well-defined and satisfies Def.\ \ref{def:L-B}. 
Now, let $\bm{c}_{\Lambda} \in \cV^N$ {be the vector of} coefficients of $f_{\Lambda}$ in \eqref{eq:expan_lambda}. Then
\begin{equation*}
\cL_{\bm{r}} (f)  = \bm{A} \bm{c}_{\Lambda}.
 \end{equation*} 
Next, we define $\cT_{\bm{r}}: \cV^m \rightarrow L^2_{\varrho} (\cU;\cV)$ as the reconstruction  map
\be{\label{eq:T}
 \cT_{\bm{r}} (\bm{v})= \begin{cases}  \sum_{\bm{\nu} \in \Lambda} \hat{{c}}_{\bnu}   \Psi_{\bnu} & \bm{v} \in \mathrm{Ran}(\bm{A}),
 \\
 \bm{0} & \bm{v} \notin \mathrm{Ran}(\bm{A}),
 \end{cases}
 }
where {$\mathrm{Ran}(\bm{A})$ is the range of  $\bm{A}$}, the vector $\hat{\bm{c}}_{\Lambda} = (\hat{c}_{\bm{\nu}})_{\bm{\nu} \in \Lambda}$ is defined by
 \bes{
 \hat{\bm{c}}_{\Lambda} = \argmin{} \left \{ \nm{\bm{z}}_{2;\cV} : \bm{z} \in M(\bm{v},\bm{A})\right \},
 }
and $ M(\bm{v},\bm{A})$ is the set of minimizers
 \bes{
 M(\bm{v},\bm{A}) = \argmin{\bm{z} \in \cV^N} \nm{\bm{z}}_{1;\cV} \text{ subject to $\bm{A} \bm{z} = \bm{v}$}.
 }
Note that we make this slightly awkward-looking definition for $\cT_{\bm{r}} $ to ensure that it is a well-defined (i.e., single-valued) map. In general, the minimization problem defined above does not have a unique solution. Thus, we choose the solution with the minimal $\ell^2$-norm to enforce uniqueness. Further, the problem has no solution if $\bm{v} \notin \mathrm{Ran}(\bm{A})$. Hence in this case, we simply set $\cT_{\bm{r}} (\bm{v}) = \bm{0}$. 

Note that the composition of this reconstruction map with the sampling operator $\cL_{\bm{r}} $ gives that
\begin{equation}\label{eq:minimizer}
  \cT_{\bm{r}}  (\cL_{\bm{r}}  (f) ) = \sum_{\bnu \in \Lambda} \hat{{c}}_{{\bm{\nu}}} \Psi_{\bnu},\quad \text{where } \hat{\bm{c}}_{\Lambda} \in \mathrm{argmin} \|\bm{z}\|_{1;\cV} \text{ subject to } \bm{Az}= \bm{f},
    \end{equation} 
and $\bm{f}$ is as in \R{eq:L}. Because of this setup, in order to prove Theorem \ref{thm:unknown-upper-2}, we first need to consider properties of matrices  $\bm{A} \in \bbR^{m \times N}$, and consequently operators in $\cB(\cV^N, \cV^m)$,  to recover (approximately) sparse vectors by solving the (Hilbert-valued) basis pursuit (BP) problem 
\begin{equation}\label{eq:problemQCBP}
\min_{\bm{z} \in \cV^N} \|\bm{z}\|_{1;\cV} \text{ subject to } \bm{Az}= \bm{f},
\end{equation}
with $\bm{f} \in \cV^m$. Specifically, we shall make use of the following property.

\begin{definition}
A matrix $\bm{A} \in \bbR^{m \times N}$ satisfies the \textit{robust Null Space Property (rNSP)} of order $1 \leq s \leq N$ over $\cV^N$ with constants $0 < \rho < 1$ and ${\gamma} > 0$ if
\bes{
\nm{ \x_{S}}_{2; \cV} \leq  \dfrac{\rho \nm{ \x_{S^c}}_{1;\cV} }{\sqrt{s}}+{\gamma} \nm{\bm{A} \x}_{2; \cV} ,\quad \forall \x \in \cV^N,
}
for any $S \subseteq [N]$ with $|S| \leq s$.
\end{definition}
Recall that $\bm{x}_S$ is the vector in $\cV^N$ with $i$th entry equal to $x_i$ if $i \in S$ and zero otherwise.
The following result is a straightforward application of \cite[Prop.\ 4.2]{dexter2019mixed}. It shows that  the rNSP is sufficient to provide an error bound for minimizers of the   BP problem \eqref{eq:problemQCBP}.   
\begin{theorem}\label{Thm:rNSP_recover}
Suppose $\bm{A} \in \bbR^{m \times N}$ has the rNSP over $\cV$ of order $s \in [N]$ with constants $0 < \rho < 1$ and $\gamma >0$. Let $\bm{x} \in \cV^N$, $\bm{f}= \bm{Ax} \in \cV^m$. Then every minimizer $\bm{\hat{x}} \in \cV^N$ of the  BP problem \eqref{eq:problemQCBP} satisfies
\begin{equation}\label{eq:rNSP}
\|\bm{{x}} -\bm{\hat{x}} \|_{1;\cV} \leq C_1 \sigma_{s}(\bm{{x}})_{1;\cV} \text{ and } 
\|\bm{{x}} -\bm{\hat{x}} \|_{2;\cV} \leq C_2 \dfrac{\sigma_{s}(\bm{{x}})_{{1};\cV} }{\sqrt{s}},
\end{equation}
where $C_1=2(1+\rho)/(1-\rho)$, $C_2 = 2(1+\rho)^2/(1-\rho)$.
\end{theorem}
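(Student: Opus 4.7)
The plan is to adapt the standard compressed sensing error analysis (for scalar-valued BP under the rNSP) to the vector-valued setting, exploiting the fact that the $\ell^p(\cV)$-norms behave formally like ordinary $\ell^p$-norms thanks to the triangle inequality in $\cV$. The starting observation is that the error $\bm{e} = \bm{x} - \hat{\bm{x}} \in \cV^N$ lies in the kernel of $\bm{A}$, since both $\bm{x}$ and $\hat{\bm{x}}$ produce $\bm{f}$. Fix a set $S \subseteq [N]$ with $|S| \leq s$ that contains the indices of an optimal best $s$-term approximation of $\bm{x}$ in the $\ell^1(\cV)$-sense, so that $\|\bm{x}_{S^c}\|_{1;\cV} = \sigma_s(\bm{x})_{1;\cV}$.

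Next, I would use the optimality of $\hat{\bm{x}}$ together with the triangle inequality in $\cV$ to derive the standard cone condition
\begin{equation*}
\|\bm{e}_{S^c}\|_{1;\cV} \leq \|\bm{e}_S\|_{1;\cV} + 2 \sigma_s(\bm{x})_{1;\cV}.
\end{equation*}
The derivation is the usual one: splitting $\|\hat{\bm{x}}\|_{1;\cV} = \|\hat{\bm{x}}_S\|_{1;\cV} + \|\hat{\bm{x}}_{S^c}\|_{1;\cV}$, bounding each piece below via the triangle inequality against $\|\bm{x}_S\|_{1;\cV}$, $\|\bm{x}_{S^c}\|_{1;\cV}$, $\|\bm{e}_S\|_{1;\cV}$, $\|\bm{e}_{S^c}\|_{1;\cV}$, and then using $\|\hat{\bm{x}}\|_{1;\cV} \leq \|\bm{x}\|_{1;\cV}$. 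Each scalar estimate extends entry-wise since $\|x_i\|_{\cV}$ satisfies the triangle inequality.

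With the cone condition in hand, I would invoke the rNSP together with $\bm{A}\bm{e}=\bm{0}$ to get $\|\bm{e}_S\|_{2;\cV} \leq \rho \|\bm{e}_{S^c}\|_{1;\cV}/\sqrt{s}$, and then apply the pointwise Cauchy--Schwarz bound $\|\bm{e}_S\|_{1;\cV} \leq \sqrt{s}\, \|\bm{e}_S\|_{2;\cV}$ (valid since $|S|\leq s$). Substituting back yields $\|\bm{e}_{S^c}\|_{1;\cV} \leq \rho \|\bm{e}_{S^c}\|_{1;\cV} + 2\sigma_s(\bm{x})_{1;\cV}$, hence $\|\bm{e}_{S^c}\|_{1;\cV} \leq 2\sigma_s(\bm{x})_{1;\cV}/(1-\rho)$. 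Combining with $\|\bm{e}\|_{1;\cV} = \|\bm{e}_S\|_{1;\cV} + \|\bm{e}_{S^c}\|_{1;\cV}$ and the earlier inequalities delivers the first bound with $C_1 = 2(1+\rho)/(1-\rho)$.

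For the $\ell^2$ bound I would split $\|\bm{e}\|_{2;\cV} \leq \|\bm{e}_S\|_{2;\cV} + \|\bm{e}_{S^c}\|_{2;\cV}$ and control $\|\bm{e}_{S^c}\|_{2;\cV}$ by taking $S$ to be the $s$ largest entries of $\bm{e}$ (in the $\cV$-norm), so that Stechkin's inequality gives $\|\bm{e}_{S^c}\|_{2;\cV} \leq \|\bm{e}_{S^c}\|_{1;\cV}/\sqrt{s}$; since the cone condition and rNSP bounds hold for this choice of $S$ as well, the estimates collapse to $\|\bm{e}\|_{2;\cV} \leq C_2 \sigma_s(\bm{x})_{1;\cV}/\sqrt{s}$ with $C_2 = 2(1+\rho)^2/(1-\rho)$. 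The only real subtlety, and the one place I would be most careful, is confirming that each scalar manipulation passes cleanly through to the Banach-valued setting via the entry-wise triangle inequality; nothing truly new is needed, but the bookkeeping of the $\cV$-norms must be consistent throughout. Since this is exactly the content of \cite[Prop.~4.2]{dexter2019mixed}, the result can be invoked directly, with the above sketch serving as a self-contained derivation.
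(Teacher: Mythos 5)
Your proposal matches the paper's approach: the paper supplies no proof of its own, simply noting the result is ``a straightforward application of \cite[Prop.~4.2]{dexter2019mixed}'', which you also cite, and your sketch is the standard compressed-sensing derivation transplanted to the $\cV$-valued setting. One step in the sketch, however, is mis-stated. The bound you attribute to Stechkin's inequality, $\|\bm{e}_{S^c}\|_{2;\cV} \leq \|\bm{e}_{S^c}\|_{1;\cV}/\sqrt{s}$ with $S$ the $s$ largest entries of $\bm{e}$, is false for general vectors: with $\bm{e}=(1,1,1,0,\dots)$ and $s=2$ the left side equals $1$ but the right side is $1/\sqrt{2}$. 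Stechkin's inequality actually reads $\|\bm{e}_{S^c}\|_{2;\cV} \leq \|\bm{e}\|_{1;\cV}/\sqrt{s}$ (or, more sharply, $\leq \sqrt{\|\bm{e}_{S}\|_{1;\cV}\|\bm{e}_{S^c}\|_{1;\cV}}/\sqrt{s}$, obtained from $\|\bm{e}_{S^c}\|_{\infty;\cV} \leq \|\bm{e}_S\|_{1;\cV}/s$). The argument goes through with the correct version: together with the rNSP estimate $\|\bm{e}_S\|_{2;\cV} \leq \rho\|\bm{e}_{S^c}\|_{1;\cV}/\sqrt{s}$ one gets $\|\bm{e}\|_{2;\cV} \leq (1+\rho)\|\bm{e}\|_{1;\cV}/\sqrt{s}$, and substituting the $\ell^1$ bound from the first part recovers $C_2 = 2(1+\rho)^2/(1-\rho)$. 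Your stated inequality does in fact hold \emph{for the particular} $\bm{e} \in \ker\bm{A}$, since the rNSP forces $\|\bm{e}_S\|_{1;\cV} \leq \rho\|\bm{e}_{S^c}\|_{1;\cV}$ for this $S$ too, but that requires an additional appeal to the rNSP and is not Stechkin's inequality. A second small imprecision: with $S$ chosen as the largest entries of $\bm{e}$, the cone condition no longer yields $\sigma_s(\bm{x})_{1;\cV}$ on the right-hand side, so you should invoke the already-proved $\ell^1$ bound rather than re-derive the cone condition for the new $S$. These are details of the sketch only; the overall strategy, constants, and reliance on \cite{dexter2019mixed} are correct.
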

 Here $\sigma_s(\bm{x})_{p;\cV}$ is the $\ell^p$-norm best $s$-term approximation error, as defined in \eqref{def:sigmaV}. Now consider a matrix $\bm{A}=1/\sqrt{m} (a_{i,j})_{i,j=1}^{m,N}$, whose entries $a_{i,j}$ are independent Gaussian random variables with zero mean and variance one. Then, following similar arguments to those in \cite[Ch.~6]{adcock2022sparse} for the scalar case (or \cite{adcock2021deep} for Hilbert-valued case), one can show that if
\begin{equation}\label{eq:measu_poly}
m \geq c \cdot \left( s \cdot \log (2N/s)+ \log(2\epsilon^{-1}) \right),
\end{equation}
for some universal constant $c > 0$, then $\bm{A}$ satisfies rNSP over $\cV$ of order $s$ with constants $\rho \leq 1/2$ and $\gamma \leq 3/2$ with probability at least $1-\epsilon$ (see  \cite[Thm.~6.11--6.12]{adcock2022sparse} for more details).

\begin{remark}
{We note in passing that the universal constant $c$ in \eqref{eq:measu_poly} can be estimated. However, its precise value plays a minor role in what follows. The discussion in \cite[Sec.~6.3.2]{adcock2022sparse} and the estimates in \cite[Rmk. 9.28]{foucart2013mathematical} suggests that in our case this constant can be taken as $c \approx 80.098 \cdot (2 \sqrt{2}+1)^2$.}

\end{remark}

\subsubsection{Error bounds in probability}
To prove Theorem \ref{thm:unknown-upper-2}, we need to show an error bound in expectation. The first step towards doing this is establishing an error bound in probability. This is given by the following theorem.
\begin{theorem}\label{Thm:Prob}
Let $m \geq 3$,  $0<\epsilon<1$,  $\varrho$ be the uniform probability measure on $\cU$, $\cL_{\bm{r}}: \cY \rightarrow \cV^m$ and $\cT_{\bm{r}}: \cV^m \rightarrow L^2_{\varrho}(\cU;\cV)$ be defined as in \eqref{eq:L} and \eqref{eq:T}, respectively,  where $\Lambda = \Lambda^{\mathsf{HCI}}_{n}$ is the index set  in \eqref{HC_index_set_inf} with $n= \lceil m/\log^2(m) \rceil$, and $L=L(m,\epsilon) = \log^2(m) + \log(2\epsilon^{-1})$.   Then the following hold.
\begin{itemize}
\item[a)]  With probability at least $1-\epsilon$,
\begin{equation*}
\sup_{f \in \cH(\bm{b})} \|f- \cT_{\bm{r}} (\cL_{\bm{r}}  (f) ) \|_{L^2_{\varrho}(\cU; \cV)}  \leq \widetilde{C}(\bm{b},p) \cdot \left ( \frac{m}{ L} \right )^{1/2-1/p},
\end{equation*}
 for all $\bm{b} \in \ell^p_{\mathsf{M}}(\bbN)$, $\bm{b} \geq \bm{0}$, and $0 < p < 1$.
\item[b)] With probability one,
\begin{equation*}
\sup_{f \in \cH(\bm{b})}  \|f- \cT_{\bm{r}}  (\cL_{\bm{r}}  (f) ) \|_{L^2_{\varrho}(\cU; \cV)}  \leq  \widetilde{C}(\bm{b},p),
\end{equation*}
for all $\bm{b} \in \ell^p(\bbN)$, $\bm{b} \geq \bm{0}$, and $0 < p < 1$.
\end{itemize}
Here the constant $\widetilde{C}(\bm{b},p)$  depends on $\bm{b}$ and $p$ only.
\end{theorem}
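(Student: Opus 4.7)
The plan is to split the error into a truncation piece (coefficients outside $\Lambda$) and a discrete recovery piece (error in the coefficients inside $\Lambda$), then bound each separately. By orthonormality of the Legendre basis, $\cL_{\bm{r}}(f) = \bm{A}\bm{c}_\Lambda$ exactly (the contributions from $\bnu\notin\Lambda$ drop out because $\Phi_{i,\bm r}\in\mathrm{span}\{\Psi_{\bnu_j}\}_{j=1}^N$), and $\cT_{\bm{r}}(\cL_{\bm{r}}(f))$ is a Legendre expansion supported on $\Lambda$. Hence Parseval gives
\begin{equation*}
\|f - \cT_{\bm r}(\cL_{\bm r}(f))\|_{L^2_\varrho(\cU;\cV)}^2 \;=\; \sum_{\bnu\notin\Lambda}\|c_\bnu\|_\cV^2 \;+\; \|\bm c_\Lambda - \hat{\bm c}_\Lambda\|_{2;\cV}^2,
\end{equation*}
and it suffices to control each summand by a constant times $(m/L)^{1-2/p}$.

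For the truncation piece I would use the two key properties of the hyperbolic cross $\Lambda^{\mathsf{HCI}}_n$: it contains every anchored subset of $\cF$ of cardinality at most $n$ \cite[Prop.~2.18]{adcock2022sparse}, and its size $N$ satisfies \eqref{N_bound}. Combined with Corollary \ref{cor:anchored_sigma}, which for $\bm b\in\ell^p_{\mathsf M}(\bbN)$ produces an \emph{anchored} set $S^\star$ of size at most $n$ realising the rate $(\sum_{\bnu\notin S^\star}\|c_\bnu\|_\cV^2)^{1/2}\le C(\bm b,p) n^{1/2-1/p}$, the inclusion $S^\star\subseteq\Lambda$ yields the truncation estimate $(\sum_{\bnu\notin\Lambda}\|c_\bnu\|_\cV^2)^{1/2}\le C(\bm b,p) n^{1/2-1/p}$. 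With $n=\lceil m/\log^2 m\rceil$ and $L=\log^2 m+\log(2\epsilon^{-1})\ge \log^2 m$, this is $\lesssim C(\bm b,p)(m/L)^{1/2-1/p}$.

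For the recovery piece I would apply the Gaussian compressed sensing machinery summarised around \eqref{eq:measu_poly}. From \eqref{N_bound}, $\log N = O(\log^2 m)$. Choosing $s = \lfloor c^\star\, m/L\rfloor$ for a suitable universal $c^\star$ makes $s\log(2N/s)+\log(2\epsilon^{-1})\le m$, so with probability at least $1-\epsilon$ the matrix $\bm A$ satisfies the $\cV$-rNSP of order $s$ with $\rho\le 1/2$, $\gamma\le 3/2$. Theorem \ref{Thm:rNSP_recover} then gives $\|\bm c_\Lambda-\hat{\bm c}_\Lambda\|_{2;\cV}\le C_2\,\sigma_s(\bm c_\Lambda)_{1;\cV}/\sqrt s$, and a Stechkin-type inequality for the (vector-valued) coefficients $\bm c$ of $f\in\cH(\bm b)$ with $\bm b\in\ell^p_{\mathsf M}(\bbN)$ (see the $\ell^p$-summability estimates in Appendix \ref{Ap:poly}) bounds this by $C(\bm b,p) s^{1/2-1/p}\lesssim C(\bm b,p)(m/L)^{1/2-1/p}$. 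Adding the two bounds proves part (a) on the good event.

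For part (b), the statement is deterministic. Since $\cL_{\bm r}(f)=\bm A\bm c_\Lambda\in\mathrm{Ran}(\bm A)$, the basis pursuit minimiser satisfies $\|\hat{\bm c}_\Lambda\|_{1;\cV}\le\|\bm c_\Lambda\|_{1;\cV}\le\|\bm c\|_{1;\cV}$ by optimality, hence $\|\cT_{\bm r}(\cL_{\bm r}(f))\|_{L^2_\varrho(\cU;\cV)}=\|\hat{\bm c}_\Lambda\|_{2;\cV}\le\|\bm c\|_{1;\cV}$, and the triangle inequality with $\|f\|_{L^2_\varrho(\cU;\cV)}\le 1$ yields the constant bound $1+\|\bm c\|_{1;\cV}\le\widetilde C(\bm b,p)$ for every realisation of $\bm r$, since $\bm b\in\ell^p(\bbN)$ forces $\bm c\in\ell^1(\cF;\cV)$ via Lemma \ref{lem:coeff_known}. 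The main obstacle in (a) is the bookkeeping between $n$, $s$, $N$ and $L$: $n$ must be large enough for the anchored-set truncation to achieve the target rate, yet $N=|\Lambda^{\mathsf{HCI}}_n|$ must be small enough, so that $s\asymp m/L$ (forced by the rNSP inequality $m\gtrsim s\log(2N/s)+\log(2\epsilon^{-1})$) still matches the truncation rate. The choice $n=\lceil m/\log^2 m\rceil$ is designed precisely to balance these constraints.
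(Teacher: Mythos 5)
Your proposal follows essentially the same route as the paper: split the error into a truncation piece (coefficients outside $\Lambda$) and a coefficient-recovery piece inside $\Lambda$, bound the truncation via the anchored-set rate of Corollary~\ref{cor:anchored_sigma} plus the fact that $\Lambda^{\mathsf{HCI}}_n$ contains all anchored sets of size $n$, bound the recovery piece via the Gaussian rNSP (\eqref{eq:measu_poly}) and Theorem~\ref{Thm:rNSP_recover} plus the $\ell^p$-summability of the Legendre coefficients, and prove part~(b) by the minimality of $\hat{\bm c}_\Lambda$ together with Lemma~\ref{lem:coeff_known}. Two small differences are worth noting. First, you use the exact Pythagorean identity
\begin{equation*}
\|f - \cT_{\bm r}(\cL_{\bm r}(f))\|^2 = \sum_{\bnu\notin\Lambda}\|c_\bnu\|_\cV^2 + \|\bm c_\Lambda - \hat{\bm c}_\Lambda\|_{2;\cV}^2,
\end{equation*}
which is valid because the two summands are supported on complementary index sets, whereas the paper uses the (slightly lossier) triangle inequality; your version is cleaner. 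Second, you do not explicitly address the degenerate case $s=\lfloor c^\star m/L\rfloor < 1$, which occurs when $\epsilon$ is very small (so $L$ is large). In that regime the rNSP of order $s$ is vacuous, and the paper closes the gap by observing that $(m/L)^{1/2-1/p}>1$ there and invoking the deterministic constant bound from part~(b), absorbing the extra factor into $\widetilde C(\bm b,p)$. You allude to the "bookkeeping" issue at the end, but this edge case should be handled explicitly to make the proof complete; everything else is correct and matches the paper's argument.
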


\begin{proof}
We first prove  part (b). Using triangle inequality and Parseval's identity  we obtain
 \begin{equation*}
 \|f- \cT_{\bm{r}} (\cL_{\bm{r}} (f) ) \|_{L^2_{\varrho}(\cU; \cV)} 
 \leq   \|f  \|_{L^2_{\varrho}(\cU; \cV)} + \|  \cT_{\bm{r}} (\cL_{\bm{r}} (f) ) \|_{L^2_{\varrho}(\cU; \cV)} 
 = \|\bm{c} \|_{2;\cV}+\|\hat{\bm{c}}_{\Lambda} \|_{2;\cV}, \quad  \forall f \in \cH(\bm{b}).
 \end{equation*}
Using {the inequality $\|\bm{c} \|_{2;\cV} \leq \|\bm{c} \|_{1;\cV}$ and the fact that $\hat{\bm{c}}_{\Lambda} $} is a minimizer of  \eqref{eq:minimizer}, we get
 \begin{equation*}
\|\hat{\bm{c}}_{\Lambda} \|_{2;\cV} \leq \|\hat{\bm{c}}_{\Lambda} \|_{1;\cV}
\leq \|\bm{c}_{\Lambda} \|_{1;\cV} \leq \|\bm{c} \|_{1;\cV}.
 \end{equation*}
Let $\bm{b} \in \ell^p(\bbN)$, $\bm{b} \geq \bm{0}$, and $f \in \cH(\bm{b})$.  Lemma~\ref{lem:coeff_known} {implies that} the Legendre coefficients satisfy
 \begin{equation*}
 \|\bm{c} \|_{1;\cV} \leq \|\bm{c} \|_{p;\cV}     \leq C(\bm{b},p),
 \end{equation*}
 where   $C(\bm{b},p)$ is the constant {of this lemma}. 
 Therefore, taking the supremum over $f \in  \cH(\bm{b})$ we get 
\begin{equation*}
\sup_{f \in \cH(\bm{b})}  \|f- \cT_{\bm{r}} (\cL_{\bm{r}} (f) ) \|_{L^2_{\varrho}(\cU; \cV)}  \leq {2 {C}(\bm{b},p)},
\end{equation*}
as required.

We now prove  part (a). Let  $\bm{A} \in \bbR^{m \times N}$ be the random matrix used in the construction of $\cL_{\bm{r}}$ and recall that $\bm{A}$ extends to a bounded linear operator  $\bm{A}: \cV^N \rightarrow \cV^m$ given by \eqref{def-inducedBLO}. Let 
\begin{equation*}
s = \left \lfloor  \frac{m}{18c\cdot L(m,\epsilon)}\right\rfloor,
\end{equation*}
where $c>0$ is the constant in \eqref{eq:measu_poly}. {We now assume that $s \geq 1$, which is equivalent to $m/(18c\cdot  L(m,\epsilon)) \geq1$. We show part (a) with the assumption  $s<1$ at the end of the proof.} 
From the estimate for $N$ in  \eqref{N_bound} we get{
\begin{align*}
\log(2N/s) \leq \log(\E N) &\leq \log(\E^2n^{2+\log(n)/\log(2)})  \leq \left(2+\frac{\log(n)}{\log(2)} \right) \log(\E n) \leq 2 (\log(\E n))^2 .
\end{align*}
Hence
\begin{align*}
s \cdot \log(2N/s) + \log (2 \epsilon^{-1}) 
&\leq s \cdot (  2 (\log(\E n))^2+ \log (2 \epsilon^{-1}) ) \\
&\leq 18s \cdot ( \log^2(m) + \log (2 \epsilon^{-1}) ) \\
&\leq 18s \cdot L(m,\epsilon).
\end{align*}
Here we use $n \leq 2m$ in the second inequality   combined with $\log(2\E m) \leq 3 \log(m)$ (since $m \geq 3$) in the second inequality.}
Now, using  \eqref{eq:measu_poly} and the definition of $s$, we deduce that,   with probability $1-\epsilon$   the matrix $\bm{A}$ has the rNSP over $\cV$ of order $s$ with constants $\rho \leq 1/2$ and $\gamma \leq 3/2$. 

Next, we derive a bound for the approximation error.  Using the triangle inequality we  get
 \begin{equation*}
 \|f- \cT_{\bm{r}}  (\cL _{\bm{r}} (f) ) \|_{L^2_{\varrho}(\cU; \cV)} 
 \leq  \|f- f_{\Lambda} \|_{L^2_{\varrho}(\cU; \cV)}  +\|f_{\Lambda}- \cT_{\bm{r}}  (\cL_{\bm{r}}  (f) ) \|_{L^2_{\varrho}(\cU; \cV)}.
 \end{equation*}
 Now, Parseval's identity  gives 
 \begin{equation*}
 \|f_{\Lambda}- \cT_{\bm{r}} (\cL_{\bm{r}} (f) ) \|_{L^2_{\varrho}(\cU; \cV)} = \|\bm{c}_{\Lambda}-\hat{\bm{c}}_{\Lambda} \|_{2;\cV},
 \end{equation*}
 where $\hat{\bm{c}}_{\Lambda}$ is as in \eqref{eq:minimizer}.  Since $\bm{A}$ has the rNSP over $\cV$, we may apply Theorem \ref{Thm:rNSP_recover}   to get
\begin{equation*}\label{eq:sigma_A_1}
\|f_{\Lambda}- \cT_{\bm{r}} (\cL_{\bm{r}} (f) ) \|_{L^2_{\varrho}(\cU; \cV)}  {\leq C_2} \dfrac{\sigma_s(\bm{c}_{\Lambda})_{1;\cV}}{\sqrt{s}}  {\leq 9 \dfrac{\sigma_s(\bm{c}_{\Lambda})_{1;\cV}}{\sqrt{s}}}.
\end{equation*}
This last inequality is due to the fact that $\rho \leq 1/2$ in Theorem \ref{Thm:rNSP_recover}.
Then,   using Corollary \ref{cor:known} with $q=1>p$ we get
\begin{equation*}
\dfrac{\sigma_s(\bm{c}_{\Lambda})_{1;\cV}}{\sqrt{s}} \leq \dfrac{\sigma_s(\bm{c})_{1;\cV}}{\sqrt{s}}  \leq C(\bm{b},p) \cdot s^{1/2-1/p},\quad \forall f \in \cH(\bm{b}),\ \bm{b} \in \ell^p(\bbN),\ \bm{b} \geq \bm{0}.
\end{equation*}
{Now let $\bm{b} \in \ell^p_{\mathsf{M}}(\bbN)$, $\bm{b} \geq \bm{0}$, $f \in \cH(\bm{b})$ and consider the term $\|f- f_{\Lambda} \|_{L^2_{\varrho}(\cU; \cV)} $. By applying Corollary \ref{cor:anchored_sigma} with $q=2>p$, we get that there exists an anchored set $S_{\mathsf{A}} \subset \cF$ with $|S_{\mathsf{A}}| \leq n$ such that 
\bes{
\|\bm{c}-\bm{c}_{S_{\mathsf{A}}}\|_{2;\cV}  \leq      C_{\mathsf{A}}(\bm{b},p) \cdot n^{1/2-1/p},
}
}
where the constant ${C_{\mathsf{A}}}(\bm{b},p) $ is as in \eqref{eq:coeff_known_anch}.
Using the definition of $\Lambda$ given in \eqref{HC_index_set_inf}, we know that it contains the union of all anchored sets of size $n$ {and therefore $S_{\mathsf{A}} \subseteq \Lambda$}. This yields
\begin{equation*}
\|f- f_{\Lambda} \|_{L^2_{\varrho}(\cU; \cV)} = \|\bm{c}-\bm{c}_{\Lambda}\|_{2;\cV}    \leq   \|\bm{c}-\bm{c}_{S_{\mathsf{A}}}\|_{2;\cV}    
\leq      C_{\mathsf{A}}(\bm{b},p) \cdot n^{1/2-1/p},\quad \forall f \in \cH(\bm{b}).
\end{equation*}
Combining these two results, noticing that $s \leq n$ and {using} the definition of {$s$} we get  
 \begin{equation}\label{eq:bound_f_b}
 \|f- \cT_{\bm{r}} (\cL_{\bm{r}} (f) ) \|_{L^2_{\varrho}(\cU; \cV)} 
 \leq \widetilde{C}(\bm{b},p) \cdot \left( \dfrac{m}{L}\right)^{1/2-1/p}, \quad \forall f \in \cH (\bm{b})
 \end{equation}
 and all $\bm{b} \in \ell^p_{\mathsf{M}}(\bbN)$, $\bm{b} \geq \bm{0}$, as required. Here the constant  
\begin{equation}\label{eq:consta_1}
\widetilde{C}(\bm{b},p) = {9 (18c)^{1/p-1/2} C(\bm{b},p) + C_{\mathsf{A}}(\bm{b},p)},
 \end{equation} 
 depends on $\bm{b}$  and $p$ only.

{Finally, we prove the case $s < 1$, i.e., $m < 18c \cdot L(m,\epsilon)$. Since $p<1$, we  have 
\begin{equation*}
1 < \left( \dfrac{m}{18c \cdot L} \right)^{1/2-1/p}.
\end{equation*}
From part (b) we deduce that
\begin{equation*}
\sup_{f \in \cH(\bm{b})}  \|f- \cT_{\bm{r}} (\cL_{\bm{r}} (f) ) \|_{L^2_{\varrho}(\cU; \cV)}  \leq 2 {C}(\bm{b},p)\left( \dfrac{m}{18c \cdot L} \right)^{1/2-1/p}=(18 c)^{1/p-1/2} 2 {C}(\bm{b},p)\left( \dfrac{m}{ L} \right)^{1/2-1/p},
\end{equation*}
where $C$ is the constant from  Lemma~\ref{lem:coeff_known} and $c>0$ the constant in \eqref{eq:measu_poly}.  The result now follows immediately by noticing that $(18 c)^{1/p-1/2} \cdot 2 C \leq \widetilde{C}$, where $\widetilde{C}$ is as in  \eqref{eq:consta_1}.} 
\end{proof}

\begin{proof}
[Proof of Theorem \ref{thm:unknown-upper-2}] 
Let $\cL_{\bm{r}}$, $\cT_{\bm{r}}$ be as in Theorem \ref{Thm:Prob} and set
\begin{equation}\label{eq:def_eps}
\epsilon = \left(\dfrac{m}{\log^2(m)} \right)^{1/2-1/p} <1.
\end{equation}
{Recall that the random vector $\bm{r}$  is defined by \R{r-distn-general}, where $N = |\Lambda|$. Since $\Lambda = \Lambda^{\mathsf{HCI}}_n$ and $n = \lceil m / \log^2(m) \rceil$, we deduce that there exists an $\ell = \ell(m)$ depending on $m$ only such that $\bm{r} \sim \cN(0,I_{\ell})$. In particular, $\ell(m) = m \cdot | \Lambda^{\mathsf{HCI}}_n |$.}
{Next, using the definition of  $\theta_m(p,{\mathsf{M}})$ in \eqref{theta-upsilon-unknown-aniso}, notice that the following holds:
 \begin{equation*}
\theta_m(p,{\mathsf{M}}) \leq \bbE_{\bm{r} \sim \cN(0,I_\ell) }  \sup_{f \in \cH(p,\mathsf{M})} \nmu{f - \cT_{\bm{r}}(\cL_{\bm{r}}(f))}_{L^2_{\varrho}(\cU ; \cV)}.
\end{equation*}
}
We now prove part (a). Let $L(m,\epsilon)$ be as in Theorem~\ref{Thm:Prob}. Since $m \geq 3$, we have $\log(m) > 1$. Therefore
\begin{equation*}
{L(m,\epsilon) \leq \log^2(m) + \log(2) + (1/p-1/2) \log(m) \leq (1/2 + \log(2) + 1/p) \log^2(m) = : c_p \log^2(m)},
\end{equation*}
  where $\epsilon$  is as in \R{eq:def_eps}. 
Now let $E$  be the event 
\begin{equation*}
E = \left \{ \sup_{f \in \cH(\bm{b})} \|f- \cT_{\bm{r}} (\cL_{\bm{r}} (f) ) \|_{L^2_{\varrho}(\cU; \cV)}  \leq \overline{C}(\bm{b},p) \cdot \left ( \frac{m}{\log^2(m)} \right )^{1/2-1/p},\ \forall \bm{b} \in \ell^p_{\mathsf{M}}(\bbN), \bm{b} \geq \bm{0}, 0 < p < 1 \right \},
\end{equation*}
where $\overline{C}(\bm{b},p) = (c_p)^{1/p-1/2} \cdot \widetilde{C}(\bm{b},p)$ and $ \widetilde{C}(\bm{b},p)$ is the constant in Theorem \ref{Thm:Prob}. Then Theorem~\ref{Thm:Prob}   implies that $\bbP(E^c) \leq \epsilon$. Hence
\begin{equation*}
\begin{split}
& \bbE_{\bm{r} \sim \cN(0,I_\ell)}  \sup_{f \in \cH(\bm{b})}\|f- \cT_{\bm{r}} (\cL_{\bm{r}} (f) ) \|_{L^2_{\varrho}(\cU; \cV)}  \\
&=  \bbE \left( \sup_{f \in \cH(\bm{b})} \|f- \cT _{\bm{r}}(\cL _{\bm{r}}(f) ) \|_{L^2_{\varrho}(\cU; \cV)}\Big|E\right) 
+ \bbE \left( \sup_{f \in \cH(\bm{b})} \|f- \cT _{\bm{r}}(\cL _{\bm{r}}(f) ) \|_{L^2_{\varrho}(\cU; \cV)}\Big|E^{c}\right) \\
&\leq  \overline{C}(\bm{b},p) \cdot \left( \dfrac{m}{\log^2(m)}\right)^{1/2-1/p} 
+ {2}\widetilde{C}(\bm{b},p)\cdot \epsilon
\\
& \leq {3}  \overline{C}(\bm{b},p) \cdot \left( \dfrac{m}{\log^2(m)}\right)^{1/2-1/p} .
\end{split}
\end{equation*}
This completes the proof of part (a).

We now prove part (b).
Let   $\cL_{\bm{r}}$, $\cT_{\bm{r}}$ be as in Theorem \ref{Thm:Prob} with $\epsilon$ as in \eqref{eq:def_eps}. Using Theorem~\ref{Thm:Prob} part (a) we get that $\bbP(\widetilde{E}) \geq 1-\epsilon$, where
\bes{
\widetilde{E} = \bigcap_{\substack{\bm{b} \in \ell^q_{\mathsf{M}}(\bbN), \bm{b} \geq \bm{0} \\ 0 < q < 1}} \left \{ \sup_{f \in \cH(\bm{b})} \|f- \cT_{\bm{r}}  (\cL_{\bm{r}} (f) ) \|_{L^2_{\varrho}(\cU; \cV)}  \leq \overline{C}(\bm{b},q) \cdot \left ( \frac{m}{\log^2(m)} \right )^{1/2-1/q}  \right \},
}
and 
\begin{equation*}
\overline{C}(\bm{b},q) =  (c_q)^{1/q-1/2} \cdot \widetilde{C}(\bm{b},q) = 9\left(18 \cdot c \cdot \left(\frac12+\log(2)+\frac1q\right) \right)^{1/q-1/2}\cdot \left ( C(\bm{b},q)  + C_{\mathsf{A}} (\bm{{b}},q) \right ),
\end{equation*}
where   $c$ is the constant in \eqref{eq:measu_poly}.

Note that $C(\bm{b},q)$ is the constant in Lemma \ref{lem:coeff_known} and  $ C_{\mathsf{A}}({\bm{b}},q)$  is the constant in Lemma \ref{lem:coeff_known_anch}, both with $q$ instead of $p$. 
Then, since $q>p$,  applying  Lemma \ref{cor:supr} and Lemma \ref{cor:supr_Anchored} we obtain the following uniform bound for  $\overline{C}(\bm{b},q)$:
\begin{equation*}
 {\sup_{\|\bm{b}\|_{p, \mathsf{M}} \leq 1}  \overline{C}(\bm{b},q) \leq  9\left(18 \cdot c \cdot \left(\frac12+\log(2)+\frac1q\right) \right)^{1/q-1/2}\cdot \left( \sup_{\|\bm{b}\|_{p, \mathsf{M}} \leq 1}C(\bm{b},q)  +  \sup_{\|\bm{b}\|_{p, \mathsf{M}} \leq 1} C_{\mathsf{A}}(\bm{{b}},q)  \right)\leq c_{p,q},}
\end{equation*}
where $c_{p,q}$ is a positive constant depending  on $p$ and $q$ only. Next, recall that
\begin{equation*}
\cH(p,\mathsf{M}) = {\bigcup}    \{ \cH(\bm{b}) : \bm{b} \in \ell^p_{\mathsf{M}}(\bbN),\ \bm{b} \geq \bm{0},\ \nm{\bm{b}}_{p,\mathsf{M}} \leq 1 \}.
\end{equation*}
Since $q > p$, any $\bm{b} \in \ell^p_{\mathsf{M}}(\bbN)$ satisfies $\bm{b} \in \ell^q_{\mathsf{M}}(\bbN)$. Therefore
\eas{
\widetilde{E} & \subseteq \bigcap_{\substack{\bm{b} \in \ell^p_{\mathsf{M}}(\bbN), \bm{b} \geq \bm{0} \\ \nm{\bm{b}}_{p,\mathsf{M}} \leq 1 \\ 0 < p< q < 1}} \left \{ \sup_{f \in \cH(\bm{b})} \|f- \cT_{\bm{r}}  (\cL_{\bm{r}}  (f) ) \|_{L^2_{\varrho}(\cU; \cV)}  \leq \overline{C}(\bm{b},q) \cdot \left ( \frac{m}{\log^2(m)} \right )^{1/2-1/q}  \right \}
\\
& \subseteq \bigcap_{\substack{\bm{b} \in \ell^p_{\mathsf{M}}(\bbN), \bm{b} \geq \bm{0} \\ \nm{\bm{b}}_{p,\mathsf{M}} \leq 1 \\ 0 < p< q < 1}} \left \{ \sup_{f \in \cH(\bm{b})} \|f- \cT_{\bm{r}}  (\cL_{\bm{r}}  (f) ) \|_{L^2_{\varrho}(\cU; \cV)}  \leq c_{p,q} \cdot \left ( \frac{m}{\log^2(m)} \right )^{1/2-1/q}  \right \}
 = : E.
}   
Therefore $\bbP(E) \geq \bbP(\widetilde{E}) \geq 1-\epsilon$ and by using a similar argument to that of part (a) we obtain
\begin{equation*}
\begin{split}
& \bbE_{\bm{r} \sim \cN(0,I_\ell) }  \sup_{f \in \cH(p,{\mathsf{M}})}\|f- \cT_{\bm{r}}  (\cL_{\bm{r}}  (f) ) \|_{L^2_{\varrho}(\cU; \cV)}  \\
&=  \bbE \left(\sup_{f \in \cH(p,{\mathsf{M}})} \|f- \cT_{\bm{r}}  (\cL_{\bm{r}}  (f) ) \|_{L^2_{\varrho}(\cU; \cV)} \Big|E\right) 
+ \bbE \left( \sup_{f \in \cH(p,{\mathsf{M}})} \|f- \cT_{\bm{r}}  (\cL_{\bm{r}}  (f) ) \|_{L^2_{\varrho}(\cU; \cV)} \Big|E^{c}\right) \\
&\leq c_{p,q}   \cdot \left( \dfrac{m}{\log^2(m)}\right)^{1/2-1/q} 
+ {2} c_{p,q}  \cdot \epsilon \quad \leq  \quad  {3} c_{p,q}   \cdot \left( \dfrac{m}{\log^2(m)}\right)^{1/2-1/q} ,
\end{split}
\end{equation*}
where $c_{p,q} $ is a constant depending on $p$ and $q$ only. To complete the proof, we need to remove the $\log^2(m)$ factor from the error bound. However, this follows inmediately (up to a possible change in the constant) since $q\in (p,1)$ was arbitrary.
\end{proof}

\section{Conclusions and open problems}\label{S:conclusions}

This work introduced new theoretical guarantees for the limits of approximating Banach-valued, $(\bm{b},1)$-holomorphic functions in infinite dimensions from finite data, where $\bm{b} \in \ell^p(\bbN)$, $0<p<1$. Specifically, in Theorems \ref{thm:known-lower}--\ref{thm:unknown-upper-2} we established new lower and upper bounds for various (adaptive) $m$-widths in both the known and unknown anisotropy cases. As we showed, when these bounds are decaying, they do so at a rate close to $m^{1/2-1/p}$.
In our main contributions  we demonstrated that optimal recovery is attainable for functions in the monotone case, i.e., $\bm{b} \in \ell^p_{\mathsf{M}}(\bbN)$, while approximation from data is impossible without any prior information on the variables, i.e. in the space $\cH(p)$, in which $\bm{b}$ is unknown and $\bm{b} \in \ell^p(\bbN)$ only.  Note that this work does not consider algorithms achieving these rates, but `reconstruction maps' involving minimizers of certain convex optimization problems. For a more detailed exploration using algorithms to achieve rates of the form $(m/\mathrm{polylog}(m))^{1/2-1/p}$ we refer to \cite{adcock2022efficient}.

This aside, there are several other promising directions for future research.
First,   as mentioned there are a variety of methods that can achieve an approximation error decay rate of $(m/\mathrm{polylog}(m))^{1/2-1/p}$ for $(\bm{b},\epsilon)$-holomorphic functions. For instance, in \cite{adcock2022nearoptimal,adcock2022efficient} the authors achieve an upper bound for the approximation error  in the Hilbert-valued case for each $\bm{b} \in \ell^p_{\mathsf{M}}(\bbN)$ using i.i.d. pointwise samples, i.e., \textit{standard information} as it is commonly termed \cite{novak2008trac,novak2010trac}. This work belongs to the class of nonuniform guarantees, since the decay rates obtained from i.i.d. samples hold for a fixed function $f$. On the other hand, the results shown in this paper are uniform, and therefore stronger, since they hold  for any function belonging to the given class. However, our upper bounds do not consider pointwise samples. It is still an open problem to prove upper bounds using i.i.d. pointwise samples in  the uniform case. 

Second, as shown in part (b) of Theorems \ref{thm:unknown-upper-1}  and \ref{thm:unknown-upper-2},   our upper bounds for $\overline{\theta_m}(p,\mathsf{M})$ and $\theta_m(p,\mathsf{M})$ are non-sharp in comparison to the lower bounds shown in Theorems \ref{thm:known-lower} and \ref{thm:unknown-lower-1} by an algebraic factor that can be made arbitrarily small. We conjecture that this gap can be closed. A possible route towards doing so involves showing that the constant $C(\bm{b},p)$ in Lemma \ref{cor:supr} and $C_{\mathsf{A}}(\bm{b},p)$  in Lemma \ref{cor:supr_Anchored} can be bounded uniformly for $\bm{b} \in \ell^p_{\mathsf{M}}(\bbN)$ with $\|\bm{b}\|_{p,\mathsf{M}}\leq 1$.

Third, in the case of known anisotropy, Theorem \ref{thm:known-lower} part (b) establishes that approximation from finite data is possible, even for the worst case of $\bm{b} \in \ell^p(\bbN)$ with unit norm, i.e., there is a lower bound for $\overline{\theta_m}(p)$ with a rate of $m^{1/2-1/p}$. Nonetheless, Theorem \ref{thm:unknown-upper-1} part (b) does not provide an analogous upper bound for $\overline{\theta_m}(p)$. As a result, it is interesting to bridge this gap by either demonstrating that $\overline{\theta_m}(p)$ decreases at a rate of $m^{1/2-1/p}$ as $m \rightarrow \infty$ or by establishing that it remains nondecreasing, such as $\theta_m(p)$ in the unknown anisotropy case in Theorem \ref{thm:unknown-lower-1} part(a).

Fourth, it is an open problem to extend  Theorem \ref{thm:unknown-upper-2} to the Banach-valued case. While the analysis in \cite{adcock2022nearoptimal} indicates that such an extension is possible, using it would result in a worse exponent $1/2(1/2-1/p)$ (or $1/2(1/2-1/q)$) in place of $1/2-1/p$ (or $1/2-1/q$), which is suboptimal. Proving that the same rates can be achieved in the Banach-valued case would close a key gap observed in \cite{adcock2022nearoptimal} between approximating holomorphic Hilbert- and Banach-valued functions.

Finally, it remains an open problem to derive bounds in the $L^{\infty}(\cU ; \cV)$-norm instead of the $L^2_{\varrho}(\cU ; \cV)$-norm. The approximation theory of parametric PDEs is well studied in the $L^{\infty}(\cU ; \cV)$-norm. Yet our proof strategy for the lower bounds on the $m$-widths relies on using the $L^2_{\varrho}(\cU ; \cV)$-norm.
We note that the best $s$-term polynomial approximation attains rates of the form $s^{1-1/p}$ in this norm. We therefore conjecture that versions of our main theorems hold in this norm, except with an exponent of $1-1/p$ (or $1-1/q$) in place of $1/2-1/p$ (or $1/2-1/q$).

\section*{Acknowledgments}
The authors would like to thank Aaron Berk and Simone Brugiapaglia for helpful discussions. {They would also like to thank the anonymous reviewers for their useful comments and suggestions.}
BA acknowledges the support of the Natural Sciences and Engineering Research Council of Canada of Canada (NSERC) through grant RGPIN-2021-611675.

\appendix

\section{Summability and best $s$-term polynomial approximation rates}\label{Ap:poly}

This appendix presents four lemmas on the summability of the coefficients of the Legendre polynomial expansion for the class of $(\bm{b},1)$-holomorphic functions introduced in  \S\ref{S:class_f}.  These will allow us to obtain upper bounds on the $m$-widths \eqref{theta-upsilon-known-aniso}--\eqref{theta-upsilon-unknown-aniso}.

\subsection{Legendre polynomial expansions}
\label{ss:leg-poly-exp}

First, we require some additional notation. Let $\bbN_0^{\bbN}$ be the set of infinite multi-indices. For a multi-index $\bm{\nu}=(\nu_k)_{k \in \bbN} \in \bbN_0^{\bbN}$ we let
\begin{equation*}
\|\bm{\nu}\|_0 = |\supp (\bm{\nu}) |,
\end{equation*}
be the number of nonzero entries in $\bm{\nu}$, where $\supp(\bm{\nu}) =  \lbrace k: \nu_k \neq 0 \rbrace \subseteq \bbN_{{0}}$ is the support of $\bnu$. We now  define the  set of multi-indices with at most finitely-many nonzero entries by 
\begin{equation*}
\cF := \lbrace \bm{\nu} = (\nu_k)_{k \in \bbN} \in  \bbN_0^{\bbN}: \|\bm{\nu}\|_0  < \infty \rbrace.
\end{equation*}
Let $\varrho$ be the uniform probability measure on $\cU=[-1,1]^{\bbN}$ and $\lbrace \Psi_{\bm{\nu}} \rbrace_{\bnu \in \cF}  $ be the orthonormal Legendre basis of $L^2_{\varrho}(\cU)$ constructed via the tensorization 
\begin{equation*}
\Psi_{\bm{\nu}}(\bm{y}) = \prod_{k \in \bbN} \Psi_{\nu_k}(y_k), \quad \bm{y} \in \cU, \bm{\nu} \in \cF,
\end{equation*}
where $\Psi_{\nu}$ is the univariate, orthonormal Legendre polynomial of degree $\nu$. Then any $f \in L^2_{\varrho}(\cU;\cV)$ has the convergent expansion
\begin{equation}
\label{f-exp}
f = \sum_{\bm{\nu} \in \cF} c_{\bm{\nu}} \Psi_{\bm{\nu}},\qquad \text{where }c_{\bm{\nu}} = \int_{\cU} f(\y) \Psi_{\bm{\nu}}(\bm{y}) \D \varrho(\bm{y}) \in \cV.
\end{equation}
Now let $S \subset  \cF$ be a finite index set of size $N$. Then the truncated series of $f$ is given by
\begin{equation}\label{def_f_S}
f_S = \sum_{\bm{\nu} \in S} c_{\bm{\nu}} \Psi_{\bm{\nu}}.
\end{equation}

\subsection{$\ell^p$-summability and best $s$-term rates}

Given a (multi-)index set $\Lambda$, we now define the $\ell^p(\Lambda;\cV)$-norm by
\begin{equation*}
\|\bm{v}\|_{p;\cV} =
\begin{cases}
\left(\sum_{\bnu \in \Lambda}  \|v_{\bnu}\|_{\cV}^p\right)^{1/p}, &\quad 1 \leq p < \infty, \\
\sup_{\bnu \in \Lambda}\|v_{\bnu}\|_{\cV}, &\quad   p = \infty,  \\
\end{cases}\qquad \bm{v} = (v_{\bnu})_{\bnu \in \Lambda}.
\end{equation*}
We shall typically use this in the case $\Lambda = \cF$ or $\Lambda = [N]$.

The proof of the following lemma can be found in \cite[Thm.~3.28]{adcock2022sparse} and provides a key summability estimate for the Legendre coefficients of a $(\bm{b},1)$-holomorphic function.

\begin{lemma}\label{lem:coeff_known}
Let $0<p<1$ and $\bm{b} \in \ell^p (\bbN)$ with $\bm{b} \geq {\bm{0}} $. Then the Legendre coefficients  $\bm{c} = ({c}_{\bnu})_{\bnu \in \cF}$ in \eqref{f-exp} satisfy
\begin{equation}\label{eq:coeff_known}
\|\bm{c}\|_{p;\cV} \leq C(\bm{b},p), \quad \forall f \in \cH(\bm{b}),
\end{equation}
where $C(\bm{b},p)$ depends on $\bm{b}$ and $p$ only.
\end{lemma}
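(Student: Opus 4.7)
The plan is to estimate each Legendre coefficient $c_{\bm{\nu}}$ individually using the holomorphy of $f$ on a carefully chosen polyellipse, and then sum these estimates with an optimized choice of radii to get $\ell^p$-summability. The approach follows the classical strategy of Cohen--DeVore and Chkifa--Cohen--Schwab.

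First, I would establish a univariate contour estimate. Given any $\bm{\rho} \in [1,\infty)^{\bbN}$ satisfying the admissibility condition
\begin{equation*}
\sum_{i=1}^{\infty} \left(\frac{\rho_i + \rho_i^{-1}}{2} - 1\right) b_i \leq 1,
\end{equation*}
the function $f$ is holomorphic on $\cE_{\bm{\rho}}$ with $\|f\|_{L^{\infty}(\cE_{\bm{\rho}};\cV)} \leq 1$. Only finitely many $\rho_i > 1$ are needed to bound any fixed $c_{\bm{\nu}}$ with $\bm{\nu} \in \cF$; on the corresponding finite-dimensional polyellipse I would apply the tensorized Cauchy integral formula (or, equivalently, the classical Bernstein estimate for univariate Legendre coefficients in an ellipse) to obtain an estimate of the form
\begin{equation*}
\|c_{\bm{\nu}}\|_{\cV} \leq \prod_{i \in \mathrm{supp}(\bm{\nu})} C_{\nu_i} \, \rho_i^{-\nu_i},
\end{equation*}
where $C_{\nu_i}$ grows at most polynomially in $\nu_i$ (for the orthonormal Legendre basis, $C_{\nu_i} \sim \sqrt{2\nu_i+1}$ up to a factor from the geometry of the ellipse).

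Next, for each $\bm{\nu}$ I would select $\bm{\rho} = \bm{\rho}(\bm{\nu})$ to (nearly) minimize $\prod_i \rho_i^{-\nu_i}$ subject to the admissibility constraint. The natural choice, going back to \cite{cohen2010convergence,chkifa2015breaking}, is to set $\rho_i = 1$ off $\mathrm{supp}(\bm{\nu})$ and, on $\mathrm{supp}(\bm{\nu})$, distribute the budget of $1$ proportionally so that
\begin{equation*}
\left(\frac{\rho_i + \rho_i^{-1}}{2} - 1\right) b_i \;\approx\; \frac{\nu_i}{\|\bm{\nu}\|_1}.
\end{equation*}
Substituting this back into the coefficient bound and simplifying (using $(\rho+\rho^{-1})/2 - 1 \asymp (\rho-1)^2$ as $\rho \to 1^+$) yields a bound of the product form $\|c_{\bm{\nu}}\|_{\cV} \leq d_{\bm{\nu}}(\bm{b})$ where $d_{\bm{\nu}}(\bm{b})$ has an explicit multiplicative structure in terms of the entries of $\bm{b}_{\mathrm{supp}(\bm{\nu})}$ and $\bm{\nu}$.

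Finally, I would prove $\sum_{\bm{\nu} \in \cF} d_{\bm{\nu}}(\bm{b})^p < \infty$. The main obstacle lies here: one must carefully dominate the sum over all finitely-supported multi-indices by an infinite product expression in $\bm{b}$ and then invoke $\bm{b} \in \ell^p(\bbN)$ to ensure convergence. The standard device is a Stechkin-type argument combined with the multinomial identity $\sum_{\|\bm{\nu}\|_1 = n} \binom{n}{\bm{\nu}} \bm{x}^{\bm{\nu}} = (\sum_i x_i)^n$, which, after raising the coefficient bound to power $p < 1$ and summing, reduces the problem to controlling a series of the form $\sum_n c_n \|\bm{b}\|_p^{pn}$ for some combinatorial coefficients $c_n$. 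The factor $p<1$ is essential, since it absorbs the polynomial prefactors $C_{\nu_i}^p$ into $\ell^p$ tails and ensures the resulting series converges. Bringing the $\ell^p$-norms out of the sum yields a finite constant $C(\bm{b},p)$, completing the proof.
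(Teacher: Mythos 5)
Your overall strategy is the right one, and it matches the approach the paper relies on: the paper does not prove this lemma directly, but cites \cite[Thm.~3.28]{adcock2022sparse}, whose proof proceeds exactly as you describe -- a Cauchy/Bernstein contour estimate for each $c_{\bm\nu}$ on an admissible polyellipse, an optimized choice of radii $\bm\rho = \bm\rho(\bm\nu)$, and a multinomial-type summation using $\bigl(\|\bm\nu\|_1!/\bm\nu!\bigr)^p \le \|\bm\nu\|_1!/\bm\nu!$ to bound the $\ell^p$ sum of the resulting majorants.

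There is, however, a genuine gap in your last step. You claim the sum reduces to a series of the form $\sum_n c_n \|\bm b\|_p^{pn}$ and that convergence then follows because $p<1$. But no smallness of $\|\bm b\|_p$ is assumed in the lemma: $\bm b$ is an arbitrary nonnegative sequence in $\ell^p(\bbN)$ and may have $\|\bm b\|_p$ as large as one likes (indeed, $\varepsilon$ was normalized to $1$ by \emph{rescaling} $\bm b$, so $\|\bm b\|_p > 1$ is the typical case). In that regime the geometric-like series you write down diverges, and no amount of playing with $p<1$ fixes it; $p<1$ is needed to pass from $\bigl(\|\bm\nu\|_1!/\bm\nu!\bigr)^p$ to $\|\bm\nu\|_1!/\bm\nu!$, not to rescue a divergent geometric series, and it certainly does not make the polynomial prefactors $C_{\nu_i}^p$ decay -- they still grow; they are killed by the geometric factors $\rho_i^{-p\nu_i}$.

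The missing ingredient is a truncation/splitting device. One fixes a finite $d\in\bbN$ and treats the first $d$ variables and the tail $j>d$ differently. For $j\le d$ one uses a single fixed radius $\tilde\kappa>1$ (determined only by $\|\bm b\|_1$), and the corresponding contribution to the constant is a finite product of convergent univariate series, giving a factor like $\bigl(\sum_{n\ge 0}(2n+1)^{p/2}\tilde\kappa^{-pn}\bigr)^{d/p}$. For $j>d$ one performs the per-coefficient optimization you describe, obtaining a majorant of the multiplicative form $\frac{\|\bm\nu\|_1!}{\bm\nu!}\,\bm h^{\bm\nu}\prod_j(\cdot)$ where $h_j \asymp b_{j+d}$. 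The crucial point is that $d$ is then chosen large enough that the shifted tail satisfies $\|(b_{j+d})_{j\ge1}\|_p^p < 1$ (possible precisely because $\bm b\in\ell^p$), which is what makes the multinomial series converge. Without this splitting the argument fails for generic $\bm b$; with it, the resulting constant is exactly the $C(\bm b,p)$ that the paper records in \eqref{eq:Constant_C} and analyzes further in Lemma \ref{cor:supr}.
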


We are particularly interested in bounding the supremum of $C = C(\b,q)$  over  $\bm{b} \in \ell^{p}_{\mathsf{M}}(\bbN)$ for $0 < p < q < 1$. This bound is used in the proof of part (b) of Theorems~\ref{thm:unknown-upper-1} and \ref{thm:unknown-upper-2}. The following result is obtained by modifying the proof of \cite[Thm~3.28]{adcock2022sparse}.
\begin{lemma}\label{cor:supr}
Let $0<p<q<1$. Then
\begin{equation*}
\sup_{\|\bm{b}\|_{p,\mathsf{M}} \leq 1} C(\b,q) \leq c_{p,q},
\end{equation*}
where  $C=C(\b,q)$ is the constant in \eqref{eq:coeff_known} and  $c_{p,q}$ is {a} positive constant    depending on $p$ and $q$ only. 
\end{lemma}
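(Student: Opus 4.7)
The plan is to revisit the proof of Lemma~\ref{lem:coeff_known} (i.e., \cite[Thm.~3.28]{adcock2022sparse}) and track the dependence of the constant $C(\bm{b},q)$ on $\bm{b}$, then exploit the structure of the monotone $\ell^p$-ball to bound it uniformly. Recall that the standard argument produces a nonnegative sequence $\bm{g}(\bm{b},q) = (g_{\bnu}(\bm{b},q))_{\bnu\in\cF}$ majorizing $(\|c_{\bnu}\|_{\cV})_{\bnu\in\cF}$, and establishes via a summability criterion such as \cite[Lem.~3.29]{adcock2022sparse} that $\bm{g}(\bm{b},q) \in \ell^q(\cF)$; this identifies $C(\bm{b},q)$ with (a universal multiple of) $\|\bm{g}(\bm{b},q)\|_{\ell^q(\cF)}$. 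Crucially, since $g_{\bnu}(\bm{b},q)$ arises from an optimization over polyradii $\bm{\rho}$ admissible for $\bm{b}$, the map $\bm{b} \mapsto g_{\bnu}(\bm{b},q)$ is monotone nondecreasing componentwise.

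The first step is a reduction to the monotone nonincreasing case. Because $\bm{b} \leq \tilde{\bm{b}}$ componentwise (with $\tilde{\bm{b}}$ the minimal monotone majorant), the constraint $\sum_i((\rho_i+\rho_i^{-1})/2 - 1)b_i \leq 1$ is weaker for $\bm{b}$ than for $\tilde{\bm{b}}$, so $\cR(\tilde{\bm{b}}) \subseteq \cR(\bm{b})$ and $\cH(\bm{b}) \subseteq \cH(\tilde{\bm{b}})$; hence $C(\bm{b},q) \leq C(\tilde{\bm{b}},q)$, and it suffices to bound $C(\bm{b}^*,q)$ uniformly over monotone nonincreasing $\bm{b}^* \in \ell^p(\bbN)$ with $\|\bm{b}^*\|_p \leq 1$. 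For such sequences, the elementary bound $i (b^*_i)^p \leq \sum_{j=1}^i (b^*_j)^p \leq 1$ yields the pointwise envelope $b^*_i \leq i^{-1/p}$, independent of $\bm{b}^*$.

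The second step is to use the monotonicity of $\bm{g}$ in $\bm{b}$ to obtain $g_{\bnu}(\bm{b}^*,q) \leq g_{\bnu}(\bm{e}(p),q) =: \tilde{g}_{\bnu}(p,q)$ for all $\bnu \in \cF$, where $\bm{e}(p) = (i^{-1/p})_{i \in \bbN}$ is the common envelope. This reduces matters to verifying that the \emph{worst-case} sequence $\tilde{\bm{g}}(p,q)$ lies in $\ell^q(\cF)$ with norm depending only on $p$ and $q$, so that one may set $c_{p,q} := A\,\|\tilde{\bm{g}}(p,q)\|_{\ell^q(\cF)}$ for the universal constant $A$ from the original argument.

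The main obstacle is this final verification. The envelope $\bm{e}(p)$ just fails to lie in $\ell^p(\bbN)$ (the harmonic tail), so Lemma~\ref{lem:coeff_known} does not apply to it directly at exponent $p$; however, since $q > p$, we have $\bm{e}(p) \in \ell^q(\bbN)$, and this is precisely the slack needed. I would therefore re-examine the proof of the underlying summability criterion \cite[Lem.~3.29]{adcock2022sparse} to confirm that it can be run using the $\ell^q$-norm of the envelope rather than the $\ell^p$-norm of the original $\bm{b}$, producing a quantitative bound depending only on $p$ and $q$. The inherent blow-up of this constant as $q \to p^+$ flagged in Remark~\ref{rem:constants-blowup} is then explained by the fact that $\|\bm{e}(p)\|_{q} \to \infty$ as $q \to p^+$, consistent with $c_{p,q}$ being unable to be taken independent of $q$.
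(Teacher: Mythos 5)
Your proposal follows essentially the same route as the paper's proof: reduce to the monotone case via the minimal monotone majorant, pass to the universal envelope $i^{-1/p}$ (which the paper derives via a Stechkin-type argument, whereas you observe it directly from $i\,(b^*_i)^p \leq \|\bm{b}^*\|_p^p$), and then exploit the slack $q>p$ so that the envelope, while not in $\ell^p$, is in $\ell^q$, feeding this into the summability criterion of \cite[Lem.~3.29]{adcock2022sparse}. Two details you gloss over but would meet head-on in your planned verification: (i) the constant $C(\bm{b},q)$ also carries factors $\xi(\tilde{\kappa})^d$ and a geometric-type sum, both depending on $\bm{b}$ through $\tilde{\kappa}(\|\bm{b}\|_1)$, and these must be bounded uniformly over the ball (the paper uses $\|\bm{b}\|_1 \le \|\bm{b}\|_{p,\mathsf{M}} \le 1$ to force $\tilde{\kappa} > 2.6$ and hence $\xi(\tilde{\kappa}) \le 4$), and (ii) the summability criterion needs the $\ell^1$-norm of the envelope to be strictly less than $1$, which forces the truncation shift by a $p$-dependent parameter $d$ (your bare envelope $\bm{e}(p)=(i^{-1/p})$ has $\|\bm{e}(p)\|_1 = \zeta(1/p)>1$, so the unshifted envelope cannot be used directly). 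Neither point is a flaw in the strategy — both fall squarely within the re-examination you say you would carry out — but they are the non-trivial bookkeeping that turns the plan into a proof.
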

\begin{proof}
Consider $\bm{b} \in \ell^p_{\mathsf{M}}(\bbN)$ with $\|\bm{b}\|_{p,\mathsf{M}} \leq 1$. Notice   from \cite[Eq.~(3.48)]{adcock2022sparse}   that the constant   $C$ in \eqref{eq:coeff_known} can be taken to be
 \begin{equation}\label{eq:Constant_C}
 C(\b,q) = \xi(\tilde{\kappa})^d \left( \sum_{n=0}^{\infty} \dfrac{(2n+1)^{{q}/2}}{\tilde{\kappa}^{qn}} \right)^{d/q} \|\bm{g}(\b)\|_q,
 \end{equation}
 where $\xi(t)=\min \lbrace 2t,\frac{\pi}{2}(t+t^{-1})\rbrace/(t-1)$ for every $t>1$, the term $\tilde{\kappa}= \tilde{\kappa}(\b)>1$ is defined as the unique solution to 
 \begin{equation*}
 \dfrac{\tilde{\kappa}+\tilde{\kappa}^{-1}}{2} =1+\dfrac{1}{2\|\bm{b}\|_1},
 \end{equation*}
 and 
 \begin{equation}\label{eq:h}
\begin{split}
 g(\b)_{\bnu} &= \dfrac{\|\bnu\|_1!}{\bnu!} \bm{h}(\b)^{\bnu} \prod_{j \in \bbN} (\xi(\tilde{\kappa}) \sqrt{3 \nu_j}+1), \quad \forall \bnu \in \cF,\\
 {h} (\bm{b})_j &= 2\E b_{j+d},
\end{split}
 \end{equation}
 where $d\in \bbN$ is a truncation parameter.
 
We aim to bound \eqref{eq:Constant_C} by a constant  depending on $p$ and $q$. First,  we show that there is a convergent sequence $\tilde{\bm{h}}$ independent  of $\bm{b}$ that can replace  $\bm{h}(\b)$ in \eqref{eq:h}.  Then, we proceed using similar arguments to those in the proof of \cite[Thm.~3.28]{adcock2022sparse} to get the result.

Let  $\tilde{\bm{b}}$ be the minimal monotone majorant \R{min-mon-maj} of $\bm{b}$. Using Stechkin's inequality we get
\bes{
 \sum_{j = n+1}^\infty \tilde{b}_j = \sigma_n(\tilde{\bm{b}})_1 \leq n^{1-1/p} \nmu{\tilde{\bm{b}}}_{p} =   n^{1-1/p} \nm{\bm{b}}_{p,\mathsf{M}} \leq c_p n^{1-1/p},
}
{for some constant $c_p$ depending only on $p$.}
Also by monotonicity,
\bes{
n \tilde{b}_{2n} \leq \tilde{b}_{n+1} + \cdots + \tilde{b}_{2n} \leq \sum_{j = n+1}^\infty \tilde{b}_j \leq c_p n^{1-1/p}.
}
Hence $b_{n} \leq \tilde{b}_n \leq \tilde{c}_p n^{-1/p}$ for a possible different constant depending on $p$. Note  that the inequality for odd values of $n$ can be established using a similar argument.  Keeping this in mind, we define the sequence $\tilde{\bm{h}}(p)=(\tilde{{h}}(p)_j)_{j \in \bbN}$  by
\begin{equation*}
\tilde{{h}}(p)_j = {2 \tilde{c}_p \E (j+d)^{-1/p}},
\end{equation*}
where $d\in \bbN$ is  a parameter that will be chosen in the next step.
Thus, we get the bound 
\begin{equation*}
 {h}(\bm{b})_j = {2 \E b_{j+d}}{} \leq 2 \tilde{c}_p \E (j+d)^{-1/p} = \tilde{{h}}(p)_j, \quad \forall j \in \bbN.
\end{equation*}
Observe that  $\tilde{\bm{h}}(p) \in \ell^1(\bbN)$. Moreover,  using the fact that $q/p>1$ and a simple convergence argument, we obtain that  
\begin{equation}\label{eq:step1}
\| \tilde{\bm{h}}(p) \|_{q} =   2\tilde{c}_p \E  \left(\sum_{j \in \bbN} (j+d)^{-q/p} \right)^{1/q} < \infty,
\end{equation}
 which implies that $\tilde{\bm{h}}(p)  \in \ell^q(\bbN)$.  We now choose  $d=d({p})$ as the minimum $d \in \bbN$ such that
\begin{equation}\label{eq:step2}
\nmu{\tilde{\bm{h}}(p)}_1 = \sum_{j \in \bbN} {2 \tilde{c}_p  \E (j+d)^{-1/p}}{} < 1.
\end{equation}
On the other hand, since  $\|\bm{b}\|_1 \leq  \|\bm{b}\|_{1,\mathsf{M}} \leq \|\bm{b}\|_{p,\mathsf{M}} \leq 1 $  and examining the solution of equation
 \begin{equation*}
 \dfrac{\tilde{\kappa}+\tilde{\kappa}^{-1}}{2} =1+\dfrac{1}{2\|\bm{b}\|_1}, 
 \end{equation*}
we deduce that $\tilde{\kappa} > 2.6$ through a straightforward inspection. Hence, from the definition of $\xi$ and the lower bound on $\tilde{\kappa}$ we get that $\xi(\tilde{\kappa}) \leq 2 \tilde{\kappa}/(\tilde{\kappa}-1) \leq 4$. Note that this upper bound is independent  of $\tilde{\kappa}$, and therefore independent of $\bm{b}$. Keeping this in mind, we get  
\begin{equation}\label{eq:gtilde}
 g(\b)_{\bnu}  = \dfrac{\|\bnu\|_1!}{\bnu!} \bm{h}(\b)^{\bnu} \prod_{j \in \bbN} (\xi(\tilde{\kappa}) \sqrt{3 \nu_j}+1) 
 \leq
 \dfrac{\|\bnu\|_1!}{\bnu!} \tilde{\bm{h}}(p)^{\bnu} \prod_{j \in \bbN} (4 \sqrt{3 \nu_j}+1)  =:\tilde{g}(p)_{\bnu} 
 , \quad \forall \bnu \in \cF,
\end{equation}
which implies that $\|\bm{g}(\b)\|_q \leq \|\tilde{\bm{g}}(p)\|_q$. Therefore,  we can bound \eqref{eq:Constant_C}  by 
 \begin{equation}\label{eq:Constant_C2} 
 C(\b,q) = \xi(\tilde{\kappa})^d \left( \sum_{n=0}^{\infty} \dfrac{(2n+1)^{q/2}}{\tilde{\kappa}^{qn}} \right)^{d/q} \|\bm{g}(\b)\|_q
 \leq \xi(\tilde{\kappa})^d \left( \sum_{n=0}^{\infty} \dfrac{(2n+1)^{q/2}}{\tilde{\kappa}^{qn}} \right)^{d/q} \|\tilde{\bm{g}}(p)\|_q. 
 \end{equation}
 To show that $\|\tilde{\bm{g}}(p)\|_q < \infty$   we combine  \eqref{eq:step1} with \eqref{eq:step2} and apply  \cite[Lem.~3.29]{adcock2022sparse}.   It remains to bound the other term  in the previous inequality. Recall that $\xi(\tilde{\kappa})  \leq 4$ and $\tilde{\kappa} > 2.6$. Then,
\begin{equation*}
\begin{split}
\xi(\tilde{\kappa})^d \left( \sum_{n=0}^{\infty} \dfrac{(2n+1)^{q/2}}{\tilde{\kappa}^{qn}} \right)^{d/q} 
\leq    4^d \left( \sum_{n=0}^{\infty} \dfrac{(2n+1)^{q/2}}{(2.6)^{qn}} \right)^{d/q}  \leq \overline{c}_{p,q}< \infty,\\
\end{split}
\end{equation*}
where  $\overline{c}_{p,q}$ is a positive constant depending on $p$ and $q$ only. {Note that $\overline{c}_{p,q}$ depends on $p$ due to the dependence of $d$ on $p$.  }
In this way, by taking supremum over $\|\bm{b}\|_{p,\mathsf{M}} \leq 1$ in  \eqref{eq:Constant_C2} we  get  the result.
\end{proof}
We now present a best $s$-term approximation rate for the Legendre polynomials. As in \S\ref{S:main_res}, for nonsparse vectors in $\ell^p(\cF;\cV)$, we define the $\ell^p$-norm best $s$-term approximation error as 
\begin{equation} \label{def:sigmaV}
\sigma_s(\bm{x})_{p;\cV} = \inf_{\bm{z} \in \ell^p(\cF;\cV)} \{ \| \bm{x}-\bm{z}\|_{p;\cV}: |\supp (\bm{z})| \leq s \}, \quad \bm{x} \in \ell^p(\cF;\cV), 
\end{equation}
where  $$\supp(\bm{z}) = \{ \bm{\nu} \in \cF: \|{z}_{\bnu} \|_{\cV} \neq 0  \}$$
is the support of the vector $\bm{z}$.
The following result can be deduced from Stechkin's inequality and Lemma~\ref{lem:coeff_known}. Note that the proof of  Lemma~\ref{lem:coeff_known} (see \cite[Thm.~3.28]{adcock2022sparse}) involves establishing the summability of a bounding sequence for the $\cV$-norms of the polynomial coefficients in \eqref{f-exp}. {This bound is equal to $\|f\|_{L^{\infty}(\cR(\bm{b});\cV ) }$ multiplied by a factor that is independent of $f$ and depending on $\bm{b}$ only.} Consequently, the index set in the following result is independent of $f$. 

\begin{corollary}\label{cor:known}
Let $0 <p <1$, $q \geq p$, $\bm{b} \in \ell^p(\bbN)$ with $\bm{b} \geq {\bm{0}}$ and $s \in \bbN$. Then, there exists a set $S \subset \cF$ of size $|S| \leq s$ depending on $\bm{b}$ and $p$ only such that 
\begin{equation}
\sigma_s (\bm{c})_{q;\cV} \leq \|\bm{c}-\bm{c}_{S}\|_{q;\cV}  \leq C(\b,p) \cdot s^{1/q-1/p}, \quad \forall f \in \cH (\bm{b}),
\end{equation}
where  $C(\b,p)>0$    is the constant in \eqref{eq:coeff_known} and $\bm{c}=(c_{\bnu})_{\bnu \in \cF}$ are the Legendre coefficients in \eqref{f-exp}. 
\end{corollary}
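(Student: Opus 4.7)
The plan is to combine the summability estimate from Lemma \ref{lem:coeff_known} with Stechkin's inequality, choosing the truncation set $S$ from a \emph{bounding sequence} for the coefficient norms rather than from the coefficients themselves. This is what guarantees that $S$ depends only on $\bm{b}$ and $p$, not on the particular $f \in \cH(\bm{b})$.

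First, I would inspect the proof of Lemma \ref{lem:coeff_known} (equivalently \cite[Thm.~3.28]{adcock2022sparse}) to extract a sequence $\bm{a} = (a_{\bm{\nu}})_{\bm{\nu} \in \cF} \in \ell^p(\cF)$, depending only on $\bm{b}$ and $p$, such that
\begin{equation*}
\|c_{\bm{\nu}}\|_{\cV} \leq \|f\|_{L^{\infty}(\cR(\bm{b});\cV)} \cdot a_{\bm{\nu}}, \qquad \forall \bm{\nu} \in \cF,\ \forall f \in \cH(\bm{b}),
\end{equation*}
with $\|\bm{a}\|_{p} \leq C(\bm{b},p)$ (this is precisely the bound used to derive \eqref{eq:coeff_known}). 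As the remark preceding the corollary already notes, such a bounding sequence is produced in the course of that proof and is independent of $f$. Since every $f \in \cH(\bm{b})$ satisfies $\|f\|_{L^{\infty}(\cR(\bm{b});\cV)} \leq 1$, it follows that $\|c_{\bm{\nu}}\|_{\cV} \leq a_{\bm{\nu}}$ uniformly over $\cH(\bm{b})$.

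Next I would define $S \subset \cF$ as the (an) index set of cardinality at most $s$ corresponding to the $s$ largest entries of $\bm{a}$. By construction, $S$ depends on $\bm{b}$ and $p$ only. For any $f \in \cH(\bm{b})$ and any $q \geq p$, the pointwise domination $\|c_{\bm{\nu}}\|_{\cV} \leq a_{\bm{\nu}}$ yields
\begin{equation*}
\|\bm{c}-\bm{c}_S\|_{q;\cV}^{q} \;=\; \sum_{\bm{\nu} \notin S} \|c_{\bm{\nu}}\|_{\cV}^{q} \;\leq\; \sum_{\bm{\nu} \notin S} a_{\bm{\nu}}^{q} \;=\; \sigma_s(\bm{a})_{q}^{q},
\end{equation*}
where the last equality uses that $S$ captures the $s$ largest $a_{\bm{\nu}}$. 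An application of Stechkin's inequality (see, e.g., \cite[Lem.~3.5]{adcock2022sparse}), valid for $0 < p \leq q \leq \infty$, then gives
\begin{equation*}
\sigma_s(\bm{a})_{q} \;\leq\; \|\bm{a}\|_{p}\, s^{1/q-1/p} \;\leq\; C(\bm{b},p)\, s^{1/q-1/p}.
\end{equation*}
Combining these two displays delivers the desired right-hand inequality, while the left-hand inequality $\sigma_s(\bm{c})_{q;\cV} \leq \|\bm{c}-\bm{c}_S\|_{q;\cV}$ is immediate from the definition of the best $s$-term error in \eqref{def:sigmaV}, since $|\mathrm{supp}(\bm{c}_S)| \leq s$.

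The only non-routine step is the first one: extracting the $f$-independent majorant $\bm{a}$ with $\|\bm{a}\|_p \leq C(\bm{b},p)$ from the internals of the proof of Lemma \ref{lem:coeff_known}. This is the main obstacle, but it is inherent in that proof rather than new work --- the argument for \eqref{eq:coeff_known} actually establishes $\ell^p$-summability of such a majorant (the sequence denoted $\bm{g}(\bm{b})$ in \eqref{eq:h} plays this role) and not merely of the coefficients $\|c_{\bm{\nu}}\|_{\cV}$ themselves. Once this majorant is in hand, Stechkin's inequality closes the argument with no further difficulty, and the resulting $S$ is visibly independent of $f$.
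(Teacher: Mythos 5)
Your proposal is correct and matches the approach the paper itself sketches in the paragraph preceding Corollary \ref{cor:known}: choose $S$ from the $f$-independent $\ell^p$ majorant produced inside the proof of Lemma \ref{lem:coeff_known}, bound the tail of $\bm{c}$ by the tail of the majorant via pointwise domination, and finish with Stechkin's inequality. The left-hand inequality and the $f$-independence of $S$ are handled exactly as in the paper.
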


\subsection{$\ell^p_{\mathsf{A}}$-summability and best $s$-term rates in anchored sets}\label{Ap:anch}

In the last part of this appendix we require the notion of lower and anchored sets (see, e.g., \cite[\S~3.9]{adcock2022sparse}). A set $\Lambda \subseteq \cF$ is \textit{lower} if $\bm{\nu} \in \Lambda$  and $\bm{\mu} \leq \bm{\nu} $ implies that $\bm{\mu} \in \Lambda$  for every $\bm{\nu},\bm{\mu} \in \cF$. Moreover, a set $\Lambda \subseteq \cF$ is \textit{anchored} if it is lower and if  $\bm{e}_j \in \Lambda$  implies that $\{\bm{e}_1,\bm{e}_2, \ldots, \bm{e}_{j} \}\subseteq \Lambda$ for every $j \in \mathbb{N}$.

We also introduce the \textit{minimal anchored majorant} of a sequence and the $\ell^p_{\mathsf{A}}$ space. For further details we refer to  \cite[Def.~3.31]{adcock2022sparse}. Let $0<p< \infty$. A sequence $\bm{c} \in \ell^{\infty}(\cF;\cV)$ belongs to 
$\ell^p_{\mathsf{A}}(\cF;\cV)$  if its minimal anchored majorant $\tilde{\bm{c}} = (\tilde{c}_{\bm{\nu}})_{\bm{\nu} \in \cV}$, defined by
\begin{equation}\label{def:min_anch}
\tilde{c}_{\bnu} = 
\begin{cases}
\sup \{ \|c_{\bm{\mu}}\|_{\cV}: \bm{\mu} \geq \bnu \} & \text{ if }  \bnu \neq \bm{e}_j \text{ for any } j \in \bbN,  \\ 
\sup \{ \|c_{\bm{\mu}}\|_{\cV}: \bm{\mu} \geq \bm{e}_i \text{ for some } i \geq j \} & \text{ if } \bnu =\bm{e}_j \text{ for some } j \in \bbN, 
\end{cases}
\end{equation}
belongs to $\ell^p(\cF)$. In particular, we define its  $\ell^p_{\mathsf{A}}(\cF;\cV)$-norm by 
\begin{equation}
\|\bm{c}\|_{p,\mathsf{A};\cV} = \|\tilde{\bm{c}}\|_{p;\cV}.
\end{equation}

Similar to Lemma \ref{lem:coeff_known}, the following result shows summability of the Legendre coefficients of a $(\bm{b},1)$-holomorphic function in the $\ell^p_{\mathsf{A}}$-norm.  

\begin{lemma}\label{lem:coeff_known_anch}
Let $0<p<1$ and $\bm{b} \in \ell^p_{\mathsf{M}} (\bbN)$ with $\bm{b} \geq {\bm{0}} $. Then the Legendre coefficients  $\bm{c} = ({c}_{\bnu})_{\bnu \in \cF}$ in \eqref{f-exp} satisfy
\begin{equation}\label{eq:coeff_known_anch}
\|\bm{c}\|_{p,\mathsf{A};\cV} \leq C_{\mathsf{A}}(\bm{{b}},p), \quad \forall f \in \cH(\bm{{b}}),
\end{equation}
where   $C_{\mathsf{A}}(\bm{{b}},p)$ depends on $\bm{b}$ and $p$ only.
\end{lemma}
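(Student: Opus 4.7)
The plan is to mirror the argument of Lemma \ref{lem:coeff_known} (as carried out in \cite[Thm.~3.28]{adcock2022sparse}), but exploit the monotonicity of $\bm{b}$ to upgrade the pointwise estimate on $\|c_{\bm{\mu}}\|_{\cV}$ to one that is nonincreasing in $\bm{\mu}$ (in a suitable componentwise sense), so that the anchored majorant $\tilde{\bm{c}}$ in \eqref{def:min_anch} can be controlled directly. First, I would pass from $\bm{b}$ to its minimal monotone majorant $\tilde{\bm{b}}$. Since $\tilde{\bm{b}} \geq \bm{b}$ componentwise, any $\bm{\rho} \in [1,\infty)^{\bbN}$ satisfying $\sum_j ((\rho_j + \rho_j^{-1})/2 - 1)\tilde{b}_j \leq 1$ is admissible for $\bm{b}$, and $\|\tilde{\bm{b}}\|_p = \|\bm{b}\|_{p,\mathsf{M}}$. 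Combined with the standard product-form bound on Legendre coefficients of a function holomorphic on $\cE_{\bm{\rho}}$, this yields
\begin{equation*}
\|c_{\bm{\mu}}\|_{\cV} \;\leq\; \prod_{j \in \supp(\bm{\mu})} \frac{\sqrt{2\mu_j+1}}{\rho_j^{\mu_j}}, \quad \forall \bm{\mu} \in \cF,\ f \in \cH(\bm{b}).
\end{equation*}

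Next, since $\tilde{\bm{b}}$ is nonincreasing, I would select $\bm{\rho}$ so that $\rho_j$ is nondecreasing in $j$, with $\rho_j \geq \rho_0 > \E$ for all $j > d$, where $d = d(p) \in \bbN$ is the same truncation parameter used in the proof of Lemma \ref{cor:supr} (chosen so that the tail of the associated dominating sequence $\tilde{\bm{h}}(p)$ has $\ell^1$-norm strictly less than one). A natural choice is $\rho_j \approx 1 + 1/(2\tilde{b}_j \|\tilde{\bm{b}}\|_1)$ for $j > d$, capped for $j \leq d$. With this choice, the univariate factor $\mu_j \mapsto \sqrt{2\mu_j+1}/\rho_j^{\mu_j}$ is nonincreasing on $\bbN$ for each $j > d$, so the product bound is nonincreasing in each $\mu_j$ with $j > d$.

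The payoff is that, for any $\bnu \in \cF$ with $\bnu \neq \bm{e}_j$,
\begin{equation*}
\tilde{c}_{\bnu} \;=\; \sup_{\bm{\mu} \geq \bnu} \|c_{\bm{\mu}}\|_{\cV} \;\leq\; C(d) \prod_{j} \frac{\sqrt{2\nu_j+1}}{\rho_j^{\nu_j}},
\end{equation*}
where the constant $C(d)$ absorbs the supremum over the finitely many ``bad'' coordinates $j \leq d$ (a finite product of constants since each factor is bounded). For basis-vector indices $\bnu = \bm{e}_j$, the sup in \eqref{def:min_anch} runs over $\bm{\mu} \geq \bm{e}_i$ for some $i \geq j$; because $\rho_i$ is nondecreasing in $i$, this supremum is dominated by the contribution at $i = j$ and yields a bound of the same product form. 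Applying the Stechkin-style / multinomial summability argument from the proof of Lemma~\ref{cor:supr} (in particular the estimate $\|\tilde{\bm{g}}(p)\|_p < \infty$ obtained via \cite[Lem.~3.29]{adcock2022sparse}) then gives $\tilde{\bm{c}} \in \ell^p(\cF;\cV)$, with a norm depending only on $\bm{b}$ and $p$, which is the claim.

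The main obstacle is checking that the chosen $\bm{\rho}$ really does make the product bound monotone in each $\mu_j$ uniformly past the cutoff $d$, which requires a careful calibration of $\rho_j$ against $\tilde{b}_j$ so that simultaneously (i) the admissibility constraint in \eqref{def:b-eps-holo} is met, (ii) $\rho_j > \E$ for $j > d$ (to ensure the univariate factor is decreasing), and (iii) the resulting majorant sequence remains $\ell^p$-summable. The treatment of the $\bm{e}_j$ case is a secondary technicality but follows cleanly from the monotonicity of $\bm{\rho}$.
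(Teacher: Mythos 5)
Your approach is exactly the paper's: pass to the minimal monotone majorant $\tilde{\bm{b}}$, observe that $\cR(\tilde{\bm{b}}) \subseteq \cR(\bm{b})$ (hence $\cH(\bm{b}) \subseteq \cH(\tilde{\bm{b}})$), and then bound $\nmu{\bm{c}}_{p,\mathsf{A};\cV}$ using the monotonicity of $\tilde{\bm{b}}$; the only difference is that the paper closes the final step by citing \cite[Thm.~3.33]{adcock2022sparse} directly, whereas you sketch a re-derivation. Your sketch is in the right spirit, but note that the specific calibration $\rho_j \approx 1 + 1/(2\tilde{b}_j\nmu{\tilde{\bm{b}}}_1)$ does not satisfy the admissibility constraint in \eqref{def:b-eps-holo} (the sum $\sum_j \bigl((\rho_j+\rho_j^{-1})/2-1\bigr)\tilde{b}_j$ diverges with this choice), so completing your outline would require the multi-index--dependent choice of $\bm{\rho}$ used in the cited theorem's proof rather than a single fixed admissible $\bm{\rho}$.
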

\begin{proof}
Let $\tilde{\bm{b}}$ be the minimal monotone majorant of $\bm{b}$, defined in \eqref{min-mon-maj}. Following the same argument as in \cite[Cor.~8.2]{adcock2022nearoptimal} we get that $\cR(\tilde{\bm{b}}) \subseteq \cR(\bm{{b}})$ where $\cR(\bm{b})$ is as in \eqref{def:b-eps-holo}. Therefore, $\cH(\bm{b}) \subseteq \cH(\tilde{\bm{b}})$. Since  $\tilde{\bm{b}} \in \ell^p(\bbN)$ is monotonically nonincreasing  \cite[Thm.~3.33]{adcock2022sparse}  implies the result.
\end{proof}

As in Lemma \ref{cor:supr}, we are interested in bounding the supremum of $C_{\mathsf{A}} = C_{\mathsf{A}}(\b,q)$  over  $\bm{b} \in \ell^{p}_{\mathsf{M}}(\bbN)$. This bound will be useful  for the proof of part (b) of Theorem \ref{thm:unknown-upper-2}. The following result is obtained by modifying the proof of \cite[Thm~3.33]{adcock2022sparse}.
\begin{lemma}\label{cor:supr_Anchored}
Let $0<p<q<1$. Then
\begin{equation*}
\sup_{\|\bm{b}\|_{p,\mathsf{M}} \leq 1} C_{\mathsf{A}}(\bm{{b}},q) \leq c_{p,q},
\end{equation*}
where $C_{\mathsf{A}}(\bm{{b}},p)$ is the constant in \eqref{eq:coeff_known_anch} and  $c_{p,q}$ is positive constant    depending on $p$ and $q$ only. 
\end{lemma}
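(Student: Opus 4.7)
The plan is to mirror the proof of Lemma \ref{cor:supr}, replacing the lower-set ingredients drawn from \cite[Thm.~3.28]{adcock2022sparse} with their anchored-set counterparts from \cite[Thm.~3.33]{adcock2022sparse}. I would proceed in three broad stages, as follows.

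First (\emph{reduction to the monotone case}), I would recall, exactly as in the proof of Lemma \ref{lem:coeff_known_anch}, that $\cH(\bm{b}) \subseteq \cH(\tilde{\bm{b}})$ where $\tilde{\bm{b}}$ is the minimal monotone majorant of $\bm{b}$. Combined with $\|\tilde{\bm{b}}\|_p = \|\bm{b}\|_{p,\mathsf{M}} \leq 1$, this yields $C_{\mathsf{A}}(\bm{b},q) \leq C_{\mathsf{A}}(\tilde{\bm{b}},q)$, and reduces the problem to bounding $C_{\mathsf{A}}(\tilde{\bm{b}},q)$ uniformly over monotonically nonincreasing $\tilde{\bm{b}}$ with $\|\tilde{\bm{b}}\|_p \leq 1$. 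By the same Stechkin-type argument used in Lemma \ref{cor:supr}, this also gives a uniform pointwise bound $\tilde{b}_n \leq \tilde{c}_p n^{-1/p}$ for a constant $\tilde{c}_p$ depending only on $p$.

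Second (\emph{making the constant from \cite[Thm.~3.33]{adcock2022sparse} explicit}), I would unpack the proof of \cite[Thm.~3.33]{adcock2022sparse} to extract an explicit expression for $C_{\mathsf{A}}(\tilde{\bm{b}},q)$ analogous to \eqref{eq:Constant_C}, namely a product of a factor $\xi(\tilde{\kappa})^d \big(\sum_{n \geq 0} (2n+1)^{q/2} \tilde{\kappa}^{-qn}\big)^{d/q}$ and an $\ell^q$-norm of some sequence $\bm{g}_{\mathsf{A}}(\tilde{\bm{b}})$, whose entries involve products of terms built from a tail sequence $\bm{h}_{\mathsf{A}}(\tilde{\bm{b}})_j = 2\mathrm{e}\,\tilde{b}_{j+d}$. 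The argument used after \eqref{eq:gtilde} applies verbatim: the constraint $\|\tilde{\bm{b}}\|_1 \leq \|\tilde{\bm{b}}\|_p \leq 1$ yields $\tilde{\kappa}(\tilde{\bm{b}}) > 2.6$ and hence $\xi(\tilde{\kappa}) \leq 4$, independently of $\tilde{\bm{b}}$.

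Third (\emph{dominating by a $\bm{b}$-independent sequence}), using the uniform decay from the first step I would majorize $\bm{h}_{\mathsf{A}}(\tilde{\bm{b}})_j \leq \tilde{h}(p)_j := 2\mathrm{e}\,\tilde{c}_p (j+d)^{-1/p}$, then choose $d = d(p) \in \mathbb{N}$ minimally so that $\|\tilde{\bm{h}}(p)\|_1 < 1$ as in \eqref{eq:step2}. By construction $\tilde{\bm{h}}(p) \in \ell^q(\bbN)$ since $q/p > 1$, and the anchored analogue of \cite[Lem.~3.29]{adcock2022sparse} gives $\|\tilde{\bm{g}}_{\mathsf{A}}(p)\|_q < \infty$, yielding a uniform bound on $\|\bm{g}_{\mathsf{A}}(\tilde{\bm{b}})\|_q$. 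Collecting these ingredients gives $C_{\mathsf{A}}(\tilde{\bm{b}},q) \leq c_{p,q}$, and taking the supremum over $\bm{b}$ finishes the proof.

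The main obstacle I anticipate is verifying that the anchored summability machinery of \cite[Thm.~3.33]{adcock2022sparse} produces an explicit constant of exactly the same factorizable form as \eqref{eq:Constant_C}, and that the summability criterion of \cite[Lem.~3.29]{adcock2022sparse} applies to the anchored-majorized sequence $\tilde{\bm{g}}_{\mathsf{A}}(p)$ with the same choice of $d = d(p)$. If the anchored proof uses a different majorization (e.g.\ involving $\tilde{\bm{b}}$ rather than a truncated tail), a small bookkeeping adjustment to the definition of $\tilde{\bm{h}}(p)$ may be required, but the overall structure of the argument should go through unchanged.
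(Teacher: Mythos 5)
Your proposal mirrors the paper's proof exactly: reduce to the monotone majorant $\tilde{\bm{b}}$, extract the explicit constant from \cite[Thm.~3.33]{adcock2022sparse}, dominate by a $\bm{b}$-independent sequence built from $\tilde{b}_n \leq \tilde{c}_p n^{-1/p}$, and control the $\tilde{\kappa}$- and $\xi$-dependent prefactors uniformly. The anchored constant does have slightly different bookkeeping than you guessed (the paper uses $D_1 = \max\{1,\tfrac{6}{5}\xi(\tilde{\kappa})\}$, a geometric sum $\sum_n \eta^{qn}$ rather than $\sum_n (2n+1)^{q/2}\tilde{\kappa}^{-qn}$, $h(\tilde{\bm{b}})_j = 4\mathrm{e}\,\tilde{b}_{j+d}$, and $\tilde{\kappa} \geq 2$ from the equation with $1/(4\|\tilde{\bm{b}}\|_1)$), but these are exactly the "small bookkeeping adjustments" you anticipated, and they do not change the argument's structure.
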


\begin{proof}
Let    $\tilde{\bm{b}}$ be the minimal monotone majorant \R{min-mon-maj} of $\bm{b} \in \ell^p_{\mathsf{M}} (\bbN)$ with $\nmu{\tilde{\bm{b}}}_p =\|\bm{b}\|_{p,\mathsf{M}} \leq 1$ and  $\tilde{\kappa}= \tilde{\kappa}(\tilde{\bm{b}})>1$ be the  unique solution to 
 \begin{equation*}
 \dfrac{\tilde{\kappa}+\tilde{\kappa}^{-1}}{2} =1+\dfrac{1}{4\|\tilde{\bm{b}}\|_1}.
 \end{equation*}
 Observe that $\tilde{\kappa}\geq 2$. Then,  a simple inspection reveals that
\begin{equation}
 \sqrt{2n+1} \leq  \dfrac{6}{5}  \left( \dfrac{3}{2} \right)^n \leq \dfrac{6}{5}  \left( \dfrac{1+\tilde{\kappa}}{2}\right)^n, \forall n \in \bbN.
 \end{equation} 
Also, define $\eta= \eta (\tilde{\kappa}):= (1+\tilde{\kappa})/(2\tilde{\kappa})<1$.
 Now, notice from \cite[Eq.~(3.62)]{adcock2022sparse} and the last paragraph  in the proof of \cite[Thm.~3.33]{adcock2022sparse} ,  that the constant   $C_{\mathsf{A}}$ in \eqref{eq:coeff_known_anch} can be taken to be  
 \begin{equation}\label{eq:Constant_C_anch}
C_{\mathsf{A}}(\bm{{b}},q) = \widetilde{C}_{\mathsf{A}}(\tilde{\bm{b}},q)  = D_1^d \left( \sum_{n=0}^{\infty} \eta^{qn} \right)^{d/q} \|\bm{g}(\tilde{\bm{b}})\|_q,
 \end{equation}
 where $D_1 =D_1(\tilde{\kappa}):= \max \{ 1,6/5\xi(\tilde{\kappa})\}$ with  $\xi(t)=\min \lbrace 2t,\frac{\pi}{2}(t+t^{-1})\rbrace/(t-1)$ for every $t>1$, 
and 
 \begin{equation}\label{eq:h_anch}
\begin{split}
 g(\tilde{\bm{b}})_{\bnu} &= \dfrac{\|\bnu\|_1!}{\bnu!} \bm{h}(\tilde{\bm{b}})^{\bnu} \prod_{j \in \bbN} (\xi(\tilde{\kappa}) \sqrt{3 \nu_j}+1), \quad \forall \bnu \in \cF,\\
 {h} (\tilde{\bm{b}})_j &= 4\E \tilde{{b}}_{j+d},
\end{split}
 \end{equation}
 where $d\in \bbN$ is a truncation parameter.
 
 Now, following the same arguments  as in \eqref{eq:h}--\eqref{eq:Constant_C2} we deduce that 
 \begin{equation}\label{eq:Constant_C_anch2}
\widetilde{C}_{\mathsf{A}}(\tilde{\bm{b}},q)  = D_1^d \left( \sum_{n=0}^{\infty} \eta^{qn} \right)^{d/q} \|\bm{g}(\tilde{\bm{b}})\|_q \leq  D_1^d \left( \sum_{n=0}^{\infty} \eta^{qn} \right)^{d/q} \|\tilde{\bm{g}}(p)\|_q,
 \end{equation} 
 where, using \cite[Lem.~3.29]{adcock2022sparse} once more, we see that the sequence $\tilde{\bm{g}}(p)$, defined in \eqref{eq:gtilde}  satisfies   $\|\tilde{\bm{g}}(p)\|_q< \infty$. It remains to bound the other term  in the previous inequality. As in the last steps in the proof of Lemma \ref{cor:supr}, with  $\xi (\tilde{\kappa}) \leq 4$ and $\tilde{\kappa} \geq 2$, we deduce that

\begin{equation}
  D_1^d \left( \sum_{n=0}^{\infty} \eta^{qn} \right)^{d/q}  \leq    4^d\left( \dfrac{6}{5} \right)^d \left( \sum_{n=0}^{\infty} \left( \dfrac{4}{5}\right)^{qn} \right)^{d/q}   \leq  c_{p,q} < \infty,
 \end{equation} 
 where $ c_{p,q}$ is a positive constant depending on $p$ and $q$ only. Finally, by taking the supremum over $\|\bm{b}\|_{p,\mathsf{M}} \leq 1$ in  \eqref{eq:Constant_C_anch2} we  get  the result.
\end{proof}

Let $0 < p \leq \infty$. We now introduce the concept of the \textit{$\ell^p$-norm best $s$-term approximation error in anchored sets}. This is defined as
\begin{equation} \label{def:sigma_anch}
\sigma_{s,\mathsf{A}}(\bm{x})_{p;\cV} = \inf_{\bm{z}   \in \ell^p(\cF; \cV)} \{ \| \bm{x}-\bm{z}\|_{p;\cV}: |\supp (\bm{z})| \leq s, \supp(\bm{z}) \text{ anchored} \}, \quad \bm{x} \in \ell^p ({\cF;\cV}).
\end{equation}
Now we provide an algebraic $s$-term rate in anchored sets. The following result follows similar arguments to those used in Corollary \ref{cor:known} and it is deduced by  applying \cite[Lemma 3.32]{adcock2022sparse} and Lemma \ref{lem:coeff_known_anch} to the Legendre coefficients $\bm{c}=({c}_{\bnu})_{\bnu \in \cF}$ in \eqref{f-exp}. Observe that, to prove Lemma \ref{lem:coeff_known_anch}  the $\cV$-norm of the coefficients in \eqref{f-exp} are bounded by a monotonically nonincreasing sequence (see \cite[Eq.~(3.55)]{adcock2022sparse})  that only depends on  $\bm{b}$. Therefore, the anchored set in the following corollary is independent of $f$.
\begin{corollary}\label{cor:anchored_sigma}
Let $0 <p <1$, $q \geq p$, $\bm{b} \in \ell^p_{\mathsf{M}}(\bbN)$ and $s \in \bbN$. Then, there exists an anchored set $S \subset \cF$ of size $|S| \leq s$ such that
\begin{equation}
\sigma_{s,\mathsf{A}} (\bm{c})_{q;\cV}\leq \|\bm{c}-\bm{c}_{S}\|_{q;\cV}  \leq C_{\mathsf{A}}(\bm{{b}},p) \cdot s^{1/q-1/p}, \quad \forall f \in \cH (\bm{b}),
\end{equation}
where $C_{\mathsf{A}}({\bm{b}},p)$ is the constant in \eqref{eq:coeff_known_anch}  and $\bm{c}=(c_{\bnu})_{\bnu \in \cF}$ are the Legendre coefficients in \eqref{f-exp}.  
\end{corollary}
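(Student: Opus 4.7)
The plan is to combine Lemma \ref{lem:coeff_known_anch} with a Stechkin-type inequality tailored to anchored sets, exactly as done for the classical Stechkin estimate in Corollary \ref{cor:known}. The starting observation is that, as noted in the paragraph preceding the statement, the proof of Lemma \ref{lem:coeff_known_anch} (via \cite[Thm.~3.33]{adcock2022sparse}) does not merely control the norms of the Legendre coefficients of a single $f$: it produces a sequence $\bm{a}=(a_{\bm{\nu}})_{\bm{\nu} \in \cF}$ depending only on $\bm{b}$ (and $p$) such that $\|c_{\bm{\nu}}\|_{\cV} \leq a_{\bm{\nu}}$ for every $\bm{\nu}\in\cF$ and every $f\in\cH(\bm{b})$, and such that the minimal anchored majorant of $\bm{a}$ lies in $\ell^p(\cF)$ with norm at most $C_{\mathsf{A}}(\bm{b},p)$.

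The second ingredient is \cite[Lem.~3.32]{adcock2022sparse}, which is the analogue of Stechkin's inequality for anchored sets: for any $\bm{a}\in \ell^p_{\mathsf{A}}(\cF)$ and any $q\geq p$, there exists an anchored set $S\subset \cF$ with $|S|\leq s$ such that
\begin{equation*}
\|\bm{a}-\bm{a}_S\|_{q} \leq \|\bm{a}\|_{p,\mathsf{A}} \cdot s^{1/q-1/p}.
\end{equation*}
I would apply this inequality to the $\bm{b}$-dependent bounding sequence $\bm{a}$ produced in the previous paragraph. The crucial point is that the resulting anchored set $S$ depends only on $\bm{a}$, hence only on $\bm{b}$ and $p$, and therefore is the same for every $f\in\cH(\bm{b})$.

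To conclude, I would use the pointwise bound $\|c_{\bm{\nu}}\|_{\cV} \leq a_{\bm{\nu}}$ componentwise to obtain
\begin{equation*}
\|\bm{c}-\bm{c}_{S}\|_{q;\cV} \leq \|\bm{a}-\bm{a}_S\|_q \leq \|\bm{a}\|_{p,\mathsf{A}} \cdot s^{1/q-1/p} \leq C_{\mathsf{A}}(\bm{b},p) \cdot s^{1/q-1/p},
\end{equation*}
where the first inequality uses $|c_{\bm{\nu}}|_{\cV}\leq a_{\bm{\nu}}$ for every $\bm{\nu}\notin S$, and the last uses Lemma \ref{lem:coeff_known_anch}. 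The left-hand inequality $\sigma_{s,\mathsf{A}}(\bm{c})_{q;\cV}\leq \|\bm{c}-\bm{c}_S\|_{q;\cV}$ follows directly from the definition of $\sigma_{s,\mathsf{A}}$, since $S$ is anchored and $|S|\leq s$.

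The main obstacle is verifying that the choice of $S$ can be made uniformly over $\cH(\bm{b})$ rather than depending on the particular $f$. This is handled precisely by the passage from $\bm{c}$ to the deterministic majorant $\bm{a}$: applying Stechkin's anchored inequality to $\bm{a}$ rather than to the random $\bm{c}$ decouples the choice of $S$ from $f$. Everything else is a one-line invocation of the two cited results, mirroring the structure of Corollary \ref{cor:known}.
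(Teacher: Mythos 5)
Your proposal is correct and follows essentially the same route the paper indicates: the paper itself says the corollary is deduced by applying \cite[Lem.~3.32]{adcock2022sparse} together with Lemma~\ref{lem:coeff_known_anch}, and that the anchored set $S$ can be made independent of $f$ because the coefficient majorant produced in the proof of Lemma~\ref{lem:coeff_known_anch} depends only on $\bm{b}$. You have reproduced precisely this argument, including the key observation that applying the anchored Stechkin inequality to the deterministic majorant $\bm{a}$, rather than to $\bm{c}$ directly, is what decouples $S$ from $f$.
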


\section{Widths of weighted $\ell^p$-norm balls in $\bbR^N$}\label{s:widths}

In this appendix, we present a proposition providing a lower bound to the Gelfand $m$-width in terms of $m$ and $N \in \bbN$, an equality theorem proving the connection between Gelfand $m$-widths and Kolmogorov widths for a particular set of spaces, {a duality  result by Stesin \cite{stesin1975aleksandrov}}  and a lemma that establishes the relationship between the unit balls in these spaces using the Kolmogorov $m$-widths.

First, recall the definition of the Gelfand $m$-width from \S\ref{S:apxA}. The following proposition  can be obtained from  \cite[Prop.\ 2.1]{foucart2010gelfand} by an inspection of the proof. 
\begin{proposition}[Lower bound]\label{prop:lowerbound}
Let $N\in \bbN$. For $0 < p \leq 1$, {$m<N$} and $p < q \leq \infty$,
\begin{equation}\label{eq:lowerbound}
d^m(B^p_N , \ell^q_N)  \geq \left( \frac12 \right)^{\frac2p-\frac1q}  \min \left \{ 1 , \frac{\frac{2p}{\log(3^8\E)}\log(\E N / m)}{m} \right \}^{\frac1p-\frac1q}  .
\end{equation}
\end{proposition}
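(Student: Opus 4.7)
The plan is to establish \eqref{eq:lowerbound} by running through the argument underlying \cite[Prop.~2.1]{foucart2010gelfand}, which proves a lower bound of precisely this structural form but with unspecified absolute constants, and tracking the constants explicitly to arrive at $(1/2)^{2/p-1/q}$ and $2p/\log(3^8\E)$. By the variational representation \eqref{def:mwidths}, it suffices to show that for every subspace $L \subseteq \bbR^N$ with $\mathrm{codim}(L) \leq m$ there exists $x \in L \cap B^p_N$ whose $\ell^q_N$-norm is at least the right-hand side. Homogeneity then allows us to rephrase the problem as producing a nonzero $x \in L$ with the ratio $\|x\|_q/\|x\|_p$ bounded below.

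The core of the argument is a combinatorial pigeonhole step applied to the set $\cU_s$ of vectors in $\{-1,0,+1\}^N$ with exactly $s$ nonzero entries, where $s$ is a sparsity parameter to be chosen depending on $m$, $N$ and $p$. A Gilbert-Varshamov-type packing extracts a subfamily $\cV_s \subseteq \cU_s$ whose elements are pairwise Hamming-separated by at least $s$ coordinates, with $|\cV_s|$ as large as an explicit combinatorial bound permits. A volumetric/entropy estimate simultaneously bounds the number of distinct images of any such set in the quotient $\bbR^N/L$ from above. Choosing $s$ so that $|\cV_s|$ exceeds this upper bound forces two distinct elements $u,v \in \cV_s$ to have the same image, whence $z := u - v \in L$. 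The difference $z$ is then $2s$-sparse with $\|z\|_\infty \geq 1$, so that $\|z\|_q \geq 1$ for all $q \geq p$, while $\|z\|_p \leq (2s)^{1/p}$. The renormalised vector $x := z/\|z\|_p$ therefore lies in $L \cap B^p_N$ with $\|x\|_q \geq (2s)^{-1/p+1/q}$.

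To extract the stated constants, one writes out the pigeonhole inequality in the form $s \log(\E N/s) \leq \log(3^8\E) \cdot m/(2p)$, solves it for the largest admissible $s$ in closed form, and substitutes back. In the nontrivial regime (those $m$ for which the solved $s$ is at least $1$), this produces the factor $(2p/\log(3^8\E))^{1/p-1/q} \cdot (\log(\E N/m)/m)^{-(1/p-1/q)}$; the $2^{-1/p+1/q}$ coming from the split $(2s)^{-1/p+1/q} = 2^{-1/p+1/q}\cdot s^{-1/p+1/q}$, combined with an additional $2^{-1/p}$ absorbed from the $\|z\|_p$-normalisation, gives the prefactor $(1/2)^{2/p - 1/q}$. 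The $\min\{1,\cdot\}$ in \eqref{eq:lowerbound} reflects the degenerate regime where the solved sparsity would fall below $1$: there one reverts to the trivial choice $s=1$, which still yields $(1/2)^{2/p-1/q}$ as a uniform lower bound.

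The main obstacle is purely bookkeeping: pinning down the precise constant $\log(3^8\E)$ requires a simultaneous optimisation of the Gilbert-Varshamov packing lower bound for $|\cV_s|$, the Stirling-type upper bound $\binom{N}{s} \leq (\E N/s)^s$, and the entropy estimate for the image set in $\bbR^N/L$. Each ingredient is standard in the compressive-sensing and Gelfand-width literature, but the numerical value of the constant emerges only after balancing the packing threshold against the quotient entropy in the pigeonhole step. Consequently, the result follows from a single careful pass through the proof of \cite[Prop.~2.1]{foucart2010gelfand} with attention to these numerical choices, rather than from any new structural insight.
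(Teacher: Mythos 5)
Both you and the paper handle this proposition by deferring to \cite[Prop.~2.1]{foucart2010gelfand}; the paper literally offers nothing beyond the single sentence that the proposition ``can be obtained from \cite[Prop.~2.1]{foucart2010gelfand} by an inspection of the proof.'' So at the level of strategy your proposal coincides with the paper's: cite the reference and track constants. Your norm bookkeeping is broadly consistent with what such an inspection would produce (though note that $\|z\|_q \geq 1$ together with $\|z\|_p \leq (2s)^{1/p}$ gives $\|x\|_q \geq (2s)^{-1/p}$, not $(2s)^{-1/p+1/q}$; to introduce the $1/q$ exponent you need the sharper separation estimate $\|z\|_q \geq s^{1/q}$ coming from the $\geq s$ nonzero coordinates, and for $u,v\in\{-1,0,1\}^N$ the entries of $z=u-v$ lie in $\{-2,\dots,2\}$, so $\|z\|_p \leq 2(2s)^{1/p}$ rather than $(2s)^{1/p}$).

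The genuine gap is the pigeonhole step. You assert that a bound on ``the number of distinct images of $\cV_s$ in the quotient $\bbR^N/L$'' together with $|\cV_s|$ being large forces two distinct $u,v\in\cV_s$ to have the \emph{same} image, so that $u-v\in L$. This cannot be right as stated: the quotient $\bbR^N/L$ is a real vector space of dimension at most $m$, but it has a continuum of points, so there is no a priori cardinality bound on the image of a finite set, and generically all the $\pi(u)$, $u\in\cV_s$, are distinct. The correct argument is necessarily approximate, and it is precisely this approximation that produces the explicit numerical constants. One either covers the bounded image set by $\epsilon$-cells of the quotient $\ell^p$-quasi-norm (a packing/covering volume estimate on a $k$-dimensional space, $k\leq m$), concludes that two images share a cell, and thus finds some $w\in L$ with $\|u-v-w\|_p$ small --- it is $w$, not $u-v$, that lies in $L$, and the norm-ratio estimate must then be run for this perturbed vector, which is neither exactly $2s$-sparse nor has bounded integer entries --- or, equivalently, one argues in the compressed-sensing style of Cohen--Dahmen--DeVore and Foucart--Pajor--Rauhut--Ullrich: a small Gelfand width yields a decoder $\Delta$ with good instance optimality, so the sets $A(x_i+\rho B^p_N)\subset\bbR^m$ must be pairwise disjoint, and a volume comparison in $\bbR^m$ bounds their number. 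Without this approximate step your sketch does not go through, and since the paper's whole ``proof'' is the constant-tracking exercise you describe but do not carry out, the imprecision at exactly this step means you would not reproduce the specific constants $(1/2)^{2/p-1/q}$ and $2p/\log(3^8\E)$.
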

We now give the proof of an equality result based on the methodology described in \cite[Lem.\ 10.15]{foucart2013mathematical}. Specifically, we show that the Kolmogorov widths of $\ell_N^p$-balls in the weighted $\ell^q_N$ space are equivalent to certain Gelfand widths.

First, for $1 \leq p,p^*,q,q^* \leq  \infty$ we recall the definitions of the Gelfand and Kolmogorov widths for this particular case, see \S\ref{S:apxA}. The Gelfand $m$-width of the subset $B^{q^*}_N(1/\bm{w}) $ of $\ell^{p^*}_N $ is
\begin{equation*}
d^m(B^{q^*}_N(1/\bm{w}),\ell^{p^*}_N ) = \inf \left \{ \sup_{\bm{x}\in B^{q^*}_N(1/\bm{w}) \cap \cX^m} \nm{\bm{x}}_{p^*},\ \text{$\cX^m$ a subspace of $\ell^{p^*}_N $ with $\mathrm{codim}(\cX^m) \leq m$} \right \},
\end{equation*}
and the   Kolmogorov $m$-width of a subset $B^p_N$ of the space $\ell^q_N(1/\w)$ is
\begin{equation*}
d_m(B^p_N,\ell^q_N(\w)) = \inf \left \{ \sup_{\bm{x} \in B^p_N} \inf_{\bm{z} \in \cX_m} \nm{\bm{x} - \bm{z}}_{q,\w},\text{ $\cX_m$ a subspace of $\ell^q_N(\w)$ with $\dim(\cX_m) \leq m$} \right \}.
\end{equation*}

\thm{
[Stesin]
\label{thm:stesin}
Let {$N \in \bbN$ with $N>m$}, $1 \leq q < p \leq \infty$,  and $\bm{w} \in \bbR^N$ be a vector of positive weights. Then
\bes{
d_m(B^p_N(\bm{w}),\ell_N^q) = \left ( \max_{\substack{i_1,\ldots,i_{N-m} {\in [N]}\\ i_k \neq i_j}} \left ( \sum^{N-m}_{j=1} w^{pq/(p-q)}_{i_j} \right )^{1/p-1/q} \right )^{-1}.
}
}

\begin{theorem}[Equality]\label{thm:equal_dm}
For $1 \leq p , q \leq \infty$, let  $\bm{w} \in \bbR^N$ be a vector of positive weights and $p^*,q^*$ be such that $1/p^*+1/p=1$ and  $1/q^*+1/q=1$. Then
\begin{equation}
d_m(B^p_N , \ell^q_N(\bm{w}) ) = d^m ( B^{q^*}_N(1/\bm{w}) , \ell^{p^*}_N ) .
\end{equation}
\end{theorem}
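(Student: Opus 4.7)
The strategy is a classical polarity/duality argument that matches the Kolmogorov width of a ball in a normed space with the Gelfand width of the polar ball in the dual space; an unweighted version of this appears as \cite[Lem.~10.15]{foucart2013mathematical}, and the weighted case is essentially bookkeeping. My first step is to identify the relevant dual pairings. Using $\langle \bm{x},\bm{y}\rangle = \sum_{i=1}^N x_i y_i$ together with the substitution $u_i = w_i^{-1} x_i$, $v_i = w_i y_i$, Hölder's inequality gives $|\langle \bm{x},\bm{y}\rangle| \leq \|\bm{x}\|_{q,\bm{w}} \|\bm{y}\|_{q^*,1/\bm{w}}$ with equality achievable, so $(\ell^q_N(\bm{w}))^* = \ell^{q^*}_N(1/\bm{w})$ isometrically, and likewise $(\ell^p_N)^* = \ell^{p^*}_N$.

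Next I would apply the Hahn--Banach distance-to-subspace formula: for any subspace $\cX_m \subseteq \bbR^N$ with $\dim(\cX_m) \leq m$ and any $\bm{x} \in \bbR^N$,
\begin{equation*}
\inf_{\bm{z} \in \cX_m} \|\bm{x} - \bm{z}\|_{q,\bm{w}} = \sup\left\{ |\langle \bm{x},\bm{y}\rangle| : \bm{y} \in \cX_m^\perp,\ \|\bm{y}\|_{q^*,1/\bm{w}} \leq 1 \right\},
\end{equation*}
where $\cX_m^\perp = \{\bm{y} \in \bbR^N : \langle \bm{z},\bm{y}\rangle = 0,\ \forall \bm{z} \in \cX_m\}$ is the annihilator. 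Taking the supremum over $\bm{x} \in B^p_N$, exchanging the two suprema (legitimate since both sets are compact in finite dimensions), and using the dual norm identity $\sup_{\bm{x} \in B^p_N} |\langle \bm{x},\bm{y}\rangle| = \|\bm{y}\|_{p^*}$ yields
\begin{equation*}
\sup_{\bm{x} \in B^p_N}\ \inf_{\bm{z} \in \cX_m} \|\bm{x}-\bm{z}\|_{q,\bm{w}} = \sup_{\bm{y} \in \cX_m^\perp \cap B^{q^*}_N(1/\bm{w})} \|\bm{y}\|_{p^*}.
\end{equation*}

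Finally I would close the argument by invoking the annihilator bijection: in finite dimensions, $\cX_m \mapsto \cX_m^\perp$ is a bijection between linear subspaces of $\bbR^N$ of dimension at most $m$ and those of codimension at most $m$. Taking the infimum over $\cX_m$ on the left and the equivalent infimum over $L = \cX_m^\perp$ on the right then gives
\begin{equation*}
d_m(B^p_N, \ell^q_N(\bm{w})) = \inf_{\mathrm{codim}(L)\leq m}\ \sup_{\bm{y} \in L \cap B^{q^*}_N(1/\bm{w})} \|\bm{y}\|_{p^*} = d^m(B^{q^*}_N(1/\bm{w}), \ell^{p^*}_N),
\end{equation*}
as required. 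I do not anticipate a genuine obstacle: the only mildly delicate points are correctly identifying the weighted dual norm (a straightforward Hölder computation) and applying the Hahn--Banach distance formula. Finite dimensionality removes any reflexivity concerns, so the boundary cases $p,q \in \{1,\infty\}$ require no separate treatment.
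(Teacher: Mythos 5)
Your proposal is correct and follows essentially the same route as the paper's proof: the paper likewise adapts \cite[Lem.~10.15]{foucart2013mathematical}, uses the Hahn--Banach distance formula to rewrite $\inf_{\bm{z}\in X_m}\|\bm{x}-\bm{z}\|_{q,\bm{w}}$ as a supremum over the annihilator intersected with the unit ball of the weighted dual $\ell^{q^*}_N(1/\bm{w})$, swaps the two suprema, applies the dual-norm identity $\sup_{\bm{x}\in B^p_N}\langle\phi,\bm{x}\rangle=\|\phi\|_{p^*}$, and closes via the bijection $X_m\leftrightarrow X_m^{\circ}$ between subspaces of dimension at most $m$ and those of codimension at most $m$.
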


\begin{proof}
First, given  $\bm{x} \in B_N^p$ and  a subspace $X_m$ of $\ell^q_N(\w)$, we follow the same arguments as those in \cite[Lem.\ 10.15]{foucart2013mathematical} to obtain
\begin{equation*}
\inf_{\bm{z} \in X_m} \|\bm{x}-\bm{z}\|_{q,\w} =   \ip{\phi}{\bm{x}},
\end{equation*}
for some linear bounded functional $\phi \in  { X_m^{\circ} }$, with $\|\phi\|_{(\ell^{q}_{{N}}(\w))^*} \leq 1$, where 
\begin{equation*}
 X_m^{\circ} := \{\phi \in (\ell^q_N(\w))^*: \phi(\bm{x})=0, \quad \forall \bm{x} \in X_m \}.
\end{equation*}
Now, by the definition in \eqref{eq:defB_weighted}  we  get
\begin{equation*}
\inf_{\bm{z} \in X_m} \|\bm{x}-\bm{z}\|_{q,\w}  \leq \sup_{\substack{\phi \in B_N^{q*}(1/\w) \cap X_m^{\circ} }}  \ip{\phi}{\bm{x}}.
\end{equation*}
On the other hand, for all  $\phi \in B_N^{q*}(1/\w)  \cap X_m^{\circ}$,  and $\bm{z} \in X_m$ we have 
\begin{equation*}
\ip{\phi}{\bm{x}} = \ip{\phi}{\bm{x}-\bm{z}} \leq  \|\phi\|_{(\ell^{q}_N(\w))^* } \|\bm{x}-\bm{z}\|_{q,\w}.
\end{equation*} 
Then we deduce the following equality
\begin{equation}
\inf_{\bm{z}\in X_m} \|\bm{x}-\bm{z}\|_{q,\w}  = \sup_{\substack{\phi  \in B_N^{q*}(1/\w)  \cap X_m^{\circ} }}  \ip{\phi}{\bm{x}}.
\end{equation}
Taking  supremum on both sides over $\bm{x}   \in B_N^p$, we have
\begin{align*}
 \sup_{\bm{x}  \in B_N^p} 
\inf_{\bm{z} \in X_m}  \|\bm{x}-\bm{z}\|_{q}  &
=\sup_{\bm{x}    \in B_N^p}
\sup_{\substack{\phi  \in B_N^{q*}(1/\w)  \cap X_m^{\circ} }}  \ip{\phi}{\bm{x}}  \\
&=\sup_{\substack{\phi  \in B_N^{q*}(1/\w)  \cap X_m^{\circ} }} 
\sup_{\bm{x}    \in B_N^p}
 \ip{\phi}{\bm{x}} \\
  &= \sup_{\substack{\phi  \in B_N^{q*}(1/\w)  \cap X_m^{\circ} }} 
 \|\phi\|_{p^*}  { \sup_{\bm{x}    \in B_N^p} } \|\bm{x}\|_{p}  \\ 
 &
  = \sup_{\substack{\phi  \in B_N^{q*}(1/\w)  \cap X_m^{\circ} }} 
 \|\phi\|_{p^* }.
\end{align*}
Taking the infimum over all subspaces $X_m$ with $\dim(X_m) \leq m$ and noticing the one-to-one correspondence between the subpaces $X_m^{\circ}$ and the subspaces $\cX^m$ with $\text{codim}(\cX^m) \leq m$, we obtain
\begin{equation*}
d_m(B^p_N , \ell^q_N(\bm{w}) ) = d^m ( B^{q^*}_N(1/\bm{w}) , \ell^{p^*}_N ),
\end{equation*}
as required.
\end{proof}
The following result establishes a connection between the unit ball in the weighted space $\ell^q_N(\w)$ and the unit weighted ball in $\ell^q_N$, using the Kolmogorov $m$-width  
\begin{equation*}
d_m(B^p_N(1/\w),\ell^q_N) = \inf \left \{ \sup_{\bm{x} \in B^p_N(1/\w)} \inf_{\bm{z} \in \cX_m} \nm{\bm{x} - \bm{z}}_{q},\text{ $\cX_m$ a subspace of $\ell^q_N$ with $\dim(\cX_m) \leq m$} \right \}.
\end{equation*}
\begin{lemma}\label{lem:widths_2}
Let $\bm{w} \in \bbR^N$ be a vector of positive weights and $1 \leq p,q\leq \infty$. Then
\begin{equation}
d_m(B^p_N,\ell^q_N(\w)) = d_m(B^p_N(1/\w), \ell^q_N).
\end{equation}
\end{lemma}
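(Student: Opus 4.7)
My plan is to exhibit an explicit linear isomorphism of $\bbR^N$ that simultaneously carries the ball $B^p_N$ onto $B^p_N(1/\w)$ and the weighted norm $\nm{\cdot}_{q,\w}$ onto the unweighted norm $\nm{\cdot}_{q}$. Once such a map is in hand, the equality of Kolmogorov widths follows by pushing forward the competing $m$-dimensional subspaces and observing that the sup-inf quantities defining the two widths coincide term by term.

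Concretely, I would define the linear map
\[
\Phi : \bbR^N \to \bbR^N,\qquad \Phi(\bm{x}) = (x_i/w_i)_{i=1}^{N} = (1/\w) \odot \bm{x}.
\]
Since $\w > \bm{0}$, $\Phi$ is a bijection of $\bbR^N$. The first step is to check the two key properties: (i)~$\Phi(B^p_N) = B^p_N(1/\w)$, which follows from the equivalence
\[
\sum_{i=1}^N |x_i|^p \leq 1 \iff \sum_{i=1}^N w_i^{p}\,|x_i/w_i|^p \leq 1
\]
together with the definition \eqref{eq:defB_weighted} applied to the weight vector $1/\w$; and (ii)~$\Phi$ is an isometry from $(\bbR^N,\nm{\cdot}_{q,\w})$ onto $(\bbR^N,\nm{\cdot}_q)$, since by definition
\[
\nmu{\Phi(\bm{x})}_q^q = \sum_{i=1}^N |x_i/w_i|^q = \sum_{i=1}^N w_i^{-q}|x_i|^q = \nmu{\bm{x}}_{q,\w}^q,
\]
with the obvious modification when $q=\infty$.

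The final step is to exploit these two properties in the definition of the Kolmogorov width. For any subspace $\cX_m$ of $\bbR^N$ with $\dim(\cX_m)\leq m$, the image $\Phi(\cX_m)$ is again a subspace of the same dimension, and the correspondence $\cX_m \mapsto \Phi(\cX_m)$ is a bijection on the class of such subspaces. Applying $\Phi$ inside the infimum over $\bm{z}$ and using the isometry identity gives
\[
\sup_{\bm{x} \in B^p_N} \inf_{\bm{z}\in \cX_m} \nmu{\bm{x}-\bm{z}}_{q,\w} = \sup_{\bm{y} \in B^p_N(1/\w)} \inf_{\tilde{\bm{z}} \in \Phi(\cX_m)} \nmu{\bm{y}-\tilde{\bm{z}}}_{q},
\]
and taking the infimum over all admissible $\cX_m$ on both sides produces the claimed equality. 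There is no substantial obstacle here: once the isometric/ball-preserving properties of $\Phi$ are verified, the identification of the two widths is immediate.
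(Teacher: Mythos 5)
Your proposal is correct and takes essentially the same approach as the paper: the paper's proof implicitly uses the same change of variables $\bm{x} \mapsto (1/\w)\odot\bm{x}$ and notes the resulting one-to-one correspondences between balls and between $m$-dimensional subspaces, while you simply name this map $\Phi$ and package its key properties (it maps $B^p_N$ onto $B^p_N(1/\w)$ and is an isometry from $\ell^q_N(\w)$ onto $\ell^q_N$) before pushing the sup-inf through.
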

\begin{proof}
Let $\bm{x} \in B^p_N$ and $\bm{z} \in X_m$, where $X_m$ is a $m$-dimensional subspace of $X = \ell^q_N(\w)$. Then
\begin{align*}
\inf_{\bm{z} \in X_m} \nm{\bm{x} - \bm{z}}_{q,\w}= \inf_{\bm{z} \in X_m} \left(\sum_{i \in [N]} \left ( \frac{|x_i - z_i |}{w_i} \right )^{q} \right)^{1/q} =   \inf_{\bm{z}' \in X'_m} \nm{(1/\w) \odot \bm{x} - \bm{z}'}_{q}.
\end{align*}
Notice there is a one-to-one correspondence between subspaces $X_m$ and subspaces $X'_m = \{ (1/\w) \odot \bm{z} : \bm{z} \in X_m \}$. Also, there is a one-to-one correspondence between $\bm{x} \in B^p_N$ and $(1/\w) \odot \bm{x} \in B^p_N(1/\w)$. Thus,
\bes{
d_m(B^p_N,\ell^q_N(\w)) = d_m(B^p_N(1/\w), \ell^q_N).
}
as required.
\end{proof}

\small
\bibliographystyle{plain}
\bibliography{arxivsub}

\end{document}